\documentclass[12pt]{amsart}

\usepackage{amssymb}
\usepackage{amsthm}
\usepackage{amsmath}
\usepackage{amscd}

\usepackage{verbatim}
\usepackage[all]{xy}
\usepackage{diagbox}
\usepackage{dsfont}
\usepackage{fancyhdr}
\usepackage{bbding}
\usepackage{tikz}

\numberwithin{equation}{section}

\theoremstyle{plain}
\newtheorem{theorem}{Theorem}[section]
\newtheorem{corollary}[theorem]{Corollary}
\newtheorem{lemma}[theorem]{Lemma}
\newtheorem{proposition}[theorem]{Proposition}

\newtheorem{conjecture}[theorem]{Conjecture}
\theoremstyle{definition}
\newtheorem{definition}[theorem]{Definition}
\newtheorem{remark}[theorem]{Remark}

\theoremstyle{remark}

\newcommand{\OO}{\mathcal O}
\newcommand{\A}{\mathbb{A}}
\newcommand{\R}{\mathbb{R}}

\newcommand{\Q}{\mathbb{Q}}
\newcommand{\Z}{\mathbb{Z}}

\newcommand{\C}{\mathbb{C}}

\renewcommand{\H}{\mathbb{H}}

\newcommand{\D}{\mathbb{D}}

\newcommand{\kzxz}[4]{\left(\begin{smallmatrix} #1 & #2 \\ #3 & #4\end{smallmatrix}\right) }

\newcommand{\vol}{\operatorname{vol}}
\newcommand{\tr}{\operatorname{tr}}

\newcommand{\Cl}{\operatorname{Cl}}

\newcommand{\Mp}{\operatorname{Mp}}

\newcommand{\End}{\operatorname{End}}

\newcommand{\cha}{\operatorname{char}}

\newcommand{\SL}{\operatorname{SL}}

\newcommand{\CH}{\operatorname{CH}}

\begin{document}
\setcounter{section}{-1}

\title{On the central  derivatives of L-functions and modularity of  Heenger cycles}
\author[Tuoping Du, Zhifeng Peng]{Tuoping Du,\ Zhifeng Peng}

\footnotetext[1]{Tuoping Du, \Envelope dtp1982@163.com, \Envelope tpdu@ncepu.edu.cn, ~School of Mathematics and Physics, North China  Electric Power University, Beijing, 102206, P.R. China.}
\footnotetext[2]{Zhifeng Peng, \Envelope zfpeng@suda.edu.cn,  ~School of Mathematics, Soochow university, Suzhou, 215006, P.R. China.}


\begin{abstract}

This paper establishes an arithmetic intersection formula for central $L$-derivatives in higher weight. We prove that for a general cusp form (extending the previous result for newforms), the derivative  is represented by the global height pairing between higher Heegner cycles. This result provides a framework for the Gross–Zagier–Zhang formula and its generalizations. Furthermore, we investigate the modularity of the generating series of Heegner cycles, proving a weak version of the conjecture and reducing the full modularity to a vanishing conjecture, for which we provide supporting evidence.

\end{abstract}

\dedicatory{}

\subjclass{11G15, 11G18, 11F37}
\date{\today}

\maketitle


\section{Introduction}

The study of central derivatives of $L$-functions lies at the heart of modern number theory. A cornerstone of this field is the Gross–Zagier formula \cite{GZ}, which connects the first derivative of an elliptic curve's $L$-function at the central point to the height of a Heegner point. This result has since become a fundamental tool in the study of the Birch and Swinnerton-Dyer (BSD) conjecture, providing a powerful geometric approach to understanding ranks of elliptic curves. Zhang's later extension \cite{Zhang} of this framework to higher-weight newforms further broadened its scope, expressing central derivatives of $L$-functions in terms of heights of Heegner cycles.

Building upon this geometric foundation, Bruinier and Yang \cite{BY} pioneered a new direction by studying the central derivatives of $L$-functions associated with more general cusp forms. Later, a new arithmetic representation conjecture was proposed in both \cite{BEY} and \cite{Du}. In this paper, we focus on the case of the trivial character, where the underlying structure allows for a more explicit analysis.

Within this setting, we make three main contributions. First, we formulate a precise modularity conjecture for an arithmetic generating function. Second, we prove a weak version of this conjecture, showing that the full modularity property follows from the vanishing of certain Heegner cycles. Third, we establish an explicit formula that provides direct evidence for a related vanishing conjecture. By exploring these three interconnected questions, we aim to advance the understanding of central $L$-derivatives and their role in the broader arithmetic landscape.
%
%
%

We define the quadratic space
\begin{equation}
V=\{w=\kzxz
    {w_1} { w_2 }
      {w_3}   {-w_1}  \in M_{2}(\Q) : \tr(w) =0 \},
\end{equation}
equipped with the quadratic form $Q(w) = N \cdot \det w$. We then define the lattice
 \begin{equation}
L=\big\{w =\kzxz {b}{\frac{a}{N}}{c}{-b}
   \in M_{2}(\Z) :\,   a, b, c \in \Z \big\}\subset V.
\end{equation}
Let $L^\sharp$ denote the dual lattice of $L$, and let $\Gamma' = \Mp_2(\Z)$ be the full inverse image of $\SL_2(\Z)$ under the two-fold metaplectic cover of $\SL_2(\R)$.

The Weil representation $\rho_L$ acting on the finite-dimensional group algebra $\mathbb{C}[L^\sharp / L]$, which has a standard basis $\{ e_\mu \mid \mu \in L^\sharp / L \}$.

Let $H_{k, \rho_L}$ be the space of harmonic weak Maass forms of weight $k$ with respect to $\rho_L$. The differential operator $$\xi_{k}: H_{k, \rho_L } \rightarrow S_{2-k, \overline{\rho}_L}$$
maps such forms to cusp forms of weight $2 - k$ and contragredient representation $\overline{\rho}_L$.

For each $\mu \in L^\sharp / L$ and each positive rational number $m \in Q(\mu) + \mathbb{Z}$, we define the Heegner divisor $Z(m, \mu)$ on the modular curve $ X_0(N)$, following Bruinier and Ono \cite{BO}. This generalizes the Heegner divisors originally studied by Gross and Zagier \cite{GZ}.

%
%

The Gross–Zagier formula \cite{GZ} was extended to higher weight by Zhang \cite{Zhang}, who showed that for any eigenform $G$ of weight $2\kappa$, the central derivative $L'(G, \chi, \kappa)$ can be expressed in terms of heights of Heegner cycles.

Let $\mathcal{Y} = \mathcal{Y}\kappa(N)$ denote the Kuga–Sato variety. For any CM point $x$ on $X_0(N)$, let $S\kappa(x)$ denote the Heegner cycle over $x$ in $\mathcal{Y}$. In \cite{Zhang}, Zhang studied the global height pairing $\langle \cdot, \cdot \rangle$ for higher Heegner cycles, which generalizes the Gillet–Soulé height introduced in \cite{GS}. This pairing decomposes as a sum of local height pairings.

The $\kappa$-th Heegner cycle is defined as in \cite{BEY} and \cite{Zhang} by
\begin{equation}\label{defheer}
Z_\kappa(m, \mu) = m^{\frac{\kappa - 1}{2}} \sum_{x \in Z(m, \mu)} S_\kappa(x) \in \CH^\kappa(\mathcal{Y}).
\end{equation}
For any negative definite two-dimensional subspace $U \subseteq V$, the Heegner cycle $Z_\kappa(U)$ is defined analogously.

Given $f \in H_{3/2-\kappa, \bar{\rho}L}$, we define the associated Heegner cycle by
\begin{equation}
Z{\kappa}(f) = \sum_{m > 0} c^+(-m, \mu) Z_{\kappa}(m, \mu),
\end{equation}
where $c^+(m, \mu)$ are the Fourier coefficients of the holomorphic part $f^+$ of $f$.
Let $\mathrm{Heeg}_{\kappa}(X)$ denote the space of Heegner cycles on $X = X_0(N)$. Define $\mathrm{Pin}$ as the subgroup of principal cycles given by rational functions, and $\mathrm{Pin^{alg}}$ as the subgroup of algebraic principal cycles, i.e., those defined over divisors coming from canonical meromorphic modular forms \cite{ABS} with algebraic coefficients \cite{Ze}. The corresponding quotients are denoted $\mathrm{Heeg}_{\kappa}(\Cl(X))$ and $\mathrm{Heeg}_{\kappa}^{\mathrm{alg}}(\Cl(X))$, respectively.

Let $G \in S_{2\kappa}(\Gamma_0(N))$ be a normalized newform. Under the Shimura correspondence $\mathrm{Sh}{m_0,\mu_0}$, there exists a newform $g \in S_{\frac{1}{2}+\kappa, \rho_L}$ corresponding to $G$, as well as a function $f \in H_{3/2-\kappa, \bar{\rho}_L}$ such that
$$\xi_{3/2-\kappa}(f)=\frac{1}{\langle g,~ g \rangle_{Pet}}g.$$

We now define the generating function
\begin{equation}\phi_\kappa=\sum_{m>0, \mu}Z_\kappa(m, \mu)q^me_\mu \in \mathrm{Heeg_{{\kappa}}(X)}\otimes \C[[q]].
  \end{equation}

The following conjecture is the main objective of this paper.
\begin{conjecture}[Modularity]\label{confirst}
Let the notation be as above. 
\begin{enumerate}
  \item The  function $\phi_\kappa\in \mathrm{\CH^{{\kappa}}(\mathcal{Y})}\otimes S_{1/2+\kappa, \rho_L}$ is a cusp form, where $\CH^{{\kappa}}(\mathcal{Y})$ is the codimension $\kappa$ Chow group.
  \item The $G$-component is given by
  \begin{equation}
\phi_\kappa^G=g(\tau)\otimes Z_{ \kappa}(f).
\end{equation}
\end{enumerate}
\end{conjecture}

\begin{remark}
\
\begin{itemize}
    \item The same modularity conjecture applies to the arithmetic generating function
    \[
    \hat{\phi}_\kappa = \sum_{m > 0,\, \mu} \widehat{\mathcal{Z}}_\kappa(m, \mu)  q^m e_\mu \in \widehat{\mathrm{CH}}^{\kappa}(\mathcal{Y}) \otimes \mathbb{C}[[q]].
    \]

    \item For $\kappa = 1$, the modularity of Heegner divisors has been established by Gross–Kohnen–Zagier \cite{GKZ}, Borcherds \cite{BoDuke}, and Bruinier–Ono \cite{BO}.
\end{itemize}
\end{remark}

In this work, we establish the following modularity result.
\begin{theorem}\label{thefir}
If the Chow group $\CH^{{\kappa}}(\mathcal{Y})$ is replaced by $\mathrm{Heeg_{{\kappa}}^{alg}(\Cl (\mathrm{X}))}$, then Conjecture \ref{confirst} holds.
\end{theorem}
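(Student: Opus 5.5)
The strategy is the Borcherds--Bruinier--Ono approach: use the obstruction principle (Serre duality for vector-valued forms) and relate Heegner cycles to Borcherds-type products. The key input is the generalized Borcherds lift of Bruinier--Ono \cite{BO}, in the form of the canonical meromorphic modular forms of \cite{ABS} and \cite{Ze}. For $f\in H_{3/2-\kappa,\bar\rho_L}$ whose principal part has algebraic coefficients and satisfies the obstruction condition, this lift produces a meromorphic object whose divisor is $\sum_{m>0} c^+(-m,\mu)Z(m,\mu)$. Pulling back to $\mathcal{Y}$ via the cycles $S_\kappa(x)$ then shows that $\sum c^+(-m,\mu)Z_\kappa(m,\mu)$ is an algebraic principal cycle, hence zero in $\mathrm{Heeg}_\kappa^{\mathrm{alg}}(\Cl(X))$. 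The weight $m^{(\kappa-1)/2}$ in \eqref{defheer} is exactly the normalization under which the higher Green function $G_\kappa$ (equivalently the regularized theta lift of $f$) has singularities along $Z_\kappa(m,\mu)$ with these multiplicities.

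\textbf{Step 1 (duality criterion).} A formal series $\sum a(m,\mu)q^me_\mu$ with values in a $\C$-vector space $W$ lies in $W\otimes S_{1/2+\kappa,\rho_L}$ if and only if $\sum_{m>0,\mu} c^+(-m,\mu)\,a(m,\mu)=0$ for every weakly holomorphic $f\in M^!_{3/2-\kappa,\bar\rho_L}$. Here the constant-term condition is vacuous for $\kappa\ge 2$, and for $\kappa=1$ it is handled by the Eisenstein correction as in \cite{BoDuke}. The criterion is Borcherds' theorem and is obtained by pairing with the residue pairing and using Serre duality. Because the subspaces have rational bases, it suffices to test against $f$ with rational (algebraic) principal parts.

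\textbf{Step 2 (vanishing of the relations).} For each such $f$, I would show $Z_\kappa(f)\in \mathrm{Pin}^{\mathrm{alg}}$ using the result of \cite{BO}/\cite{ABS}: the lift of a weakly holomorphic $f$ is a meromorphic modular form whose divisor is the Heegner divisor $\sum c(-m,\mu)Z(m,\mu)$, with algebraic coefficients by \cite{Ze}. Applying Step 1 with $W=\mathrm{Heeg}^{\mathrm{alg}}_\kappa(\Cl(X))\otimes\C$ then gives part (1).

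\textbf{Step 3 ($G$-component).} Decompose $S_{1/2+\kappa,\rho_L}$ into Hecke eigenspaces via Shimura lifts. The $g$-isotypic projection is $\langle \phi_\kappa, g\rangle_{Pet}\, g/\langle g,g\rangle_{Pet}$, computed coefficientwise. By the Bruinier--Funke pairing, for $f$ with $\xi(f)=g/\langle g,g\rangle$ one has $\{g', f\}=\langle g',g\rangle/\langle g,g\rangle$ for cusp forms $g'$. Meanwhile $\{\phi_\kappa,f\}=\sum c^+(-m,\mu)Z_\kappa(m,\mu)=Z_\kappa(f)$, which is well defined modulo relations by Step 2. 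Since multiplicity one holds for the $G$-eigenspace, $\phi_\kappa^G=g\otimes Z_\kappa(f)$.

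The main obstacle is Step 2. One must check that the cycles $S_\kappa(x)$ attached to the divisor of the meromorphic lift really form a cycle in $\mathrm{Pin}^{\mathrm{alg}}$: the weighted sum with weights $m^{(\kappa-1)/2}$ has to match the definition of the principal subgroup, rather than merely having degree zero. I would first check this against the definition of $\mathrm{Pin}^{\mathrm{alg}}$ in \cite{ABS}. Failing that, I would reduce to $\kappa=1$ plus equivariance of $x\mapsto S_\kappa(x)$ under rational equivalence on $X$.
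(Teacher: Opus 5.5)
Your proposal follows the paper's proof essentially step for step. The ingredients are the same: Borcherds' Serre-duality criterion applied with $W=\mathrm{Heeg}_\kappa^{\mathrm{alg}}(\Cl(X))\otimes\C$; the vanishing of $Z_\kappa(f)$ in $\mathrm{Pin}^{\mathrm{alg}}$ for weakly holomorphic $f$ with algebraic coefficients, via the weight-$2\kappa$ meromorphic form $\Phi^{(\kappa)}(z,f)$ of \cite{ABS} (Proposition \ref{theweak}); and a Bruinier--Ono argument for the $G$-component, where your Bruinier--Funke pairing computation makes explicit what the paper takes from \cite[Theorem 7.7]{BO} and Lemma \ref{lemcomp}.

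The ``main obstacle'' you flag is resolved by using the \emph{residue} divisor rather than the divisor, namely $\operatorname{res}(\Phi^{(\kappa)}(z,f))=|4N|^{\frac{\kappa-1}{2}}\sum c^+(-m,\mu)\,m^{\frac{\kappa-1}{2}}Z(m,\mu)$, which already carries the weights $m^{(\kappa-1)/2}$ of \eqref{defheer}. Your fallback of reducing to $\kappa=1$ is therefore unnecessary, and it would not work anyway, since rational equivalence of divisors on $X$ does not transfer to the cycles $S_\kappa(x)$.
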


The proof shows that Conjecture \ref{confirst} reduces to establishing the vanishing of $Z_{\kappa}(f)$ in the Chow group, as formulated in the following statement.

\begin{conjecture}\label{conjsec}
For any weakly holomorphic modular form $f$, the Heegner cycle $Z_{\kappa}(f)$ vanishes in the Chow group.
\end{conjecture}

We outline the logical dependence of the main conjecture as follows.
\begin{proposition}
$$\text{Conjecture}~ \ref{conjsec}+\text{Theorem} ~\ref{thefir}\Rightarrow ~\text{Conjecture}~ \ref{confirst}.$$
\end{proposition}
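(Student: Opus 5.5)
The plan is to bridge the gap between $\mathrm{Heeg}_{\kappa}^{\mathrm{alg}}(\Cl(X))$, where Theorem \ref{thefir} gives modularity, and the Chow group $\CH^{\kappa}(\mathcal{Y})$, where Conjecture \ref{confirst} is stated. The point is that the kernel of the natural map from Heegner cycles to $\CH^\kappa(\mathcal{Y})$ should contain the algebraic principal cycles $\mathrm{Pin}^{\mathrm{alg}}$. I expect Conjecture \ref{conjsec} to identify this kernel precisely, since these cycles are the $Z_\kappa(f)$ for weakly holomorphic $f$.

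First, I will recall the mechanism behind Theorem \ref{thefir}. By Borcherds--Bruinier--Ono type arguments, modularity of a formal series $\sum_{m,\mu} c(m,\mu)q^m e_\mu$ with values in a vector space $W$ follows from a pairing criterion. The criterion asks that for every weakly holomorphic $f\in M^!_{3/2-\kappa,\bar\rho_L}$ with principal part $\sum_{m>0} c^+(-m,\mu)q^{-m}e_\mu$, one has
\[
\sum_{m>0,\mu} c^+(-m,\mu)\,c(m,\mu)=0 \quad\text{in } W.
\]
This is Serre duality / Borcherds' obstruction theorem applied componentwise after choosing linear functionals on $W$. Applied to $\phi_\kappa$, the obstruction is exactly $Z_\kappa(f)$. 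So modularity of $\phi_\kappa$ in $\CH^\kappa(\mathcal{Y})\otimes S_{1/2+\kappa,\rho_L}$ is equivalent to the vanishing of $Z_\kappa(f)$ in $\CH^\kappa(\mathcal{Y})$ for all such $f$. That vanishing is Conjecture \ref{conjsec}, and it gives part (1). Cuspidality follows because there is no constant term ($m>0$ only), and the Eisenstein part is killed by the same criterion.

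Second, for part (2), I will use Theorem \ref{thefir} to identify the $G$-component modulo $\mathrm{Pin}^{\mathrm{alg}}$ as $g\otimes Z_\kappa(f)$. I then lift this identity to $\CH^\kappa(\mathcal{Y})$. Two lifts of a cycle class differ by an element of the subgroup generated by the $Z_\kappa(f')$ with $f'$ weakly holomorphic. This subgroup, I must argue, is exactly the image of $\mathrm{Pin}^{\mathrm{alg}}$ in the Heegner cycle group. The argument is that canonical meromorphic modular forms with algebraic coefficients have divisors given by Borcherds lifts of weakly holomorphic forms \cite{ABS,Ze}, and the $\kappa$-th cycles attached to these are the $Z_\kappa(f')$. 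Conjecture \ref{conjsec} then kills this ambiguity, so the identity holds in $\CH^\kappa(\mathcal{Y})$. Here $f$ is only harmonic and is defined up to weakly holomorphic forms, so $Z_\kappa(f)$ is well defined in $\CH^\kappa$ precisely because of Conjecture \ref{conjsec}.

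The main obstacle is the identification of $\mathrm{Pin}^{\mathrm{alg}}$ with the span of $\{Z_\kappa(f'): f'\ \text{weakly holomorphic}\}$. One must show that every algebraic principal cycle used in the quotient arises this way, so that the natural map $\mathrm{Heeg}_\kappa(X)\to\CH^\kappa(\mathcal{Y})$ factors through $\mathrm{Heeg}^{\mathrm{alg}}_\kappa(\Cl(X))$ under Conjecture \ref{conjsec}. I would attempt this via the Bruinier--Ono surjectivity of Borcherds products onto divisors of algebraic canonical forms, checking compatibility of the normalization $m^{(\kappa-1)/2}$ in \eqref{defheer} with the multiplicities coming from principal parts.
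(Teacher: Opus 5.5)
Your part (1) is correct and matches the paper. The proof of Theorem \ref{thefir} (Theorem \ref{themainone}(1)) is exactly Borcherds' Serre-duality criterion applied to $\{f,\phi_\kappa\}=Z_\kappa(f)$. The paper's remark after Theorem \ref{themainone} says that replacing Proposition \ref{theweak} by Conjecture \ref{conjsec} gives cuspidality with coefficients in $\CH^\kappa(\mathcal{Y})$. Note that this uses only the \emph{proof} of Theorem \ref{thefir}, not its statement. Your observation that $Z_\kappa(f)$ is well defined in $\CH^\kappa(\mathcal{Y})$ for harmonic $f$ precisely because of Conjecture \ref{conjsec} is also right.

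The gap is in part (2). You transfer $\phi_\kappa^G=g\otimes Z_\kappa(f)$ from $\mathrm{Heeg}^{\mathrm{alg}}_\kappa(\Cl(X))$ to $\CH^\kappa(\mathcal{Y})$. That requires $\mathrm{Pin}^{\mathrm{alg}}$ to map to zero in $\CH^\kappa(\mathcal{Y})$. You try to get this from the identification $\mathrm{Pin}^{\mathrm{alg}}=\mathrm{span}\{Z_\kappa(f') : f'\in M^!_{3/2-\kappa,\bar\rho_L}\}$, but only the inclusion $\supseteq$ is available (Proposition \ref{theweak}). The inclusion $\subseteq$ is a converse theorem proved nowhere in the paper or the cited works, and it should not be expected in general.

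Already for $\kappa=1$ it fails. Suppose the newform $G$ has $L'(G,1)=0$ and $\xi(f)=g/\langle g,g\rangle$. Then by Gross--Kohnen--Zagier and Bruinier--Ono, the Heegner divisor attached to $f$ vanishes in $J_0(N)\otimes\C$, so it is (algebraic) principal. Yet its principal part is not that of any weakly holomorphic form: writing $g=\sum b(m,\mu)q^me_\mu$, one has $\sum c^+(-m,\mu)b(m,\mu)=(g,\xi f)_{\mathrm{Pet}}=1\neq0$, whereas this sum vanishes for weakly holomorphic $f'$. The same phenomenon is expected in higher weight whenever $L'(G,\kappa)=0$. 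What your transfer really needs is the paper's separate conjecture $\mathrm{Pin}^{\mathrm{alg}}\subseteq\mathrm{Pin}$, which no available argument derives from Conjecture \ref{conjsec}.

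The fix is to avoid the transfer and rerun the argument of \cite[Theorem 7.7]{BO} directly in the Chow group. Once (1) holds in $\CH^\kappa(\mathcal{Y})\otimes\C$, multiplicity one for the $G$-isotypic part of $S_{1/2+\kappa,\rho_L}$ gives $\phi_\kappa^G=g\otimes y$ for some $y$. The Bruinier--Funke pairing then identifies $y$:
\[
Z_\kappa(f)=\{f,\phi_\kappa\}=(\phi_\kappa,\xi_{3/2-\kappa}f)_{\mathrm{Pet}}=\frac{(\phi_\kappa,g)_{\mathrm{Pet}}}{\langle g,g\rangle}=y .
\]
This yields $\phi_\kappa^G=g\otimes Z_\kappa(f)$ in $\CH^\kappa(\mathcal{Y})$ using only Conjecture \ref{conjsec}.
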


Thus, the validity of Conjecture \ref{confirst} depends on the vanishing of $Z_{\kappa}(f)$. Although a full proof of Conjecture \ref{conjsec} is not completed in this work, we establish that $\langle Z_{\kappa}(f), Z_{\kappa}(U) \rangle = 0$ for almost all Heegner cycles $Z_{\kappa}(U)$ in Corollary \ref{corzero}.

Identifying $Z_{\kappa}(U) = Z_{\kappa}(m_0, \mu_0)$ with fundamental discriminant $D_0 = -4Nm_0$, we obtain the following result.
\begin{theorem}\label{maintheo}
 For any $f \in H_{3/2-\kappa, \bar{\rho}_L}$, if $(D_0, 2ND(f))=1$,   then the global height is given by
\begin{equation}
\langle  Z_{ \kappa}(f),  Z_{\kappa}(U)\rangle
=\frac{2\sqrt{N}\Gamma(\kappa-\frac{1}{2})}{(4\pi)^{\kappa-1}\pi^{\frac{3}{2}}}L'(Sh_{m_0, \mu_0}(\xi_{3/2-\kappa}f),  \kappa),
\end{equation}
where $Sh_{m_0, \mu_0}(\xi_{3/2-\kappa}f)$ is a cusp form of weight $2\kappa$, and $D(f)$ denotes the discriminant of $Z_\kappa(f)$.
\end{theorem}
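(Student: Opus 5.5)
We reduce Theorem \ref{maintheo} to Zhang's higher Gross--Zagier formula \cite{Zhang} by decomposing into Hecke eigencomponents and using linearity on both sides.

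\emph{Step 1: spectral decomposition.} Write $g=\xi_{3/2-\kappa}(f)\in S_{1/2+\kappa,\rho_L}$. Under the Shimura lift the plus space (Kohnen space) of $S_{1/2+\kappa,\rho_L}$ corresponds Hecke-equivariantly to $S_{2\kappa}(\Gamma_0(N))$. Decompose $g=\sum_i a_i g_i$, where the $g_i$ are Hecke eigenforms corresponding to normalized newforms $G_i$, together with oldform contributions handled by the degeneracy maps. Then $Sh_{m_0,\mu_0}(g)=\sum_i a_i\,Sh_{m_0,\mu_0}(g_i)$, and $Sh_{m_0,\mu_0}(g_i)=c_{g_i}(m_0,\mu_0)\,G_i$ up to the standard normalization. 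Both sides of the theorem are linear in $f$. This is clear for the left side, because $Z_\kappa(f)$ is linear in the coefficients $c^+(-m,\mu)$. For the right side it follows because $\xi$ is antilinear, but the $a_i$ can be taken with the appropriate conjugation.

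\emph{Step 2: dependence only on $\xi f$.} We show the left side depends only on $g=\xi f$. If $\xi f=0$, then $f$ is weakly holomorphic, and Borcherds' lift gives a meromorphic modular form $\Psi(f)$ with divisor $Z(f)$. The higher Heegner cycle $Z_\kappa(f)$ is then the $\kappa$-analogue built from this principal divisor. Pairing it with $Z_\kappa(U)$ should vanish, via Zhang's local decomposition. The archimedean contribution equals the higher Green function evaluated at CM points, which for weakly holomorphic $f$ equals the regularized theta lift of $f$ and hence the CM value of a Borcherds-type product. The finite contributions are controlled by the condition $(D_0,2ND(f))=1$, which ensures proper intersection and good reduction. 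This yields Corollary \ref{corzero}. Consequently we may replace $f$ by any preimage of $g$, and in particular by $\sum_i \bar a_i f_i$ with $\xi f_i=g_i/\langle g_i,g_i\rangle$.

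\emph{Step 3: newform case.} For each $i$ we need the formula with $f=f_i$. Here Conjecture \ref{confirst}(2) in its $\mathrm{Heeg}^{\mathrm{alg}}_\kappa$ form (Theorem \ref{thefir}) identifies $Z_\kappa(f_i)$ with the $G_i$-isotypic projection of the Heegner cycles, normalized as $Z_\kappa(m_0,\mu_0)^{G_i}/c_{g_i}(m_0,\mu_0)$-type expressions. Pairing with $Z_\kappa(U)=Z_\kappa(m_0,\mu_0)$ and applying Zhang's formula for $\langle Z^{G}, Z^{G}\rangle$ in terms of $L'(G,\chi_{D_0},\kappa)L(G,\kappa)$ gives a product of central values. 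Waldspurger--Kohnen--Zagier converts $|c_{g_i}(m_0,\mu_0)|^2/\langle g_i,g_i\rangle$ into $L(G_i,\chi_{D_0},\kappa)$, and after simplification only $L'(c_{g_i}(m_0,\mu_0)G_i,\kappa)$ remains. The constant $2\sqrt N\Gamma(\kappa-\tfrac12)/((4\pi)^{\kappa-1}\pi^{3/2})$ arises from combining the Kohnen--Zagier constant, the Petersson norm normalizations, and the factor $m^{(\kappa-1)/2}$ in \eqref{defheer}. Alternatively, one can follow Bruinier--Yang \cite{BY}: compute the archimedean height as a CM value of the regularized theta lift of $f$, express it via the Rankin--Selberg integral $\langle g,\theta_U\otimes E'\rangle$ to get $L'$, and show the finite parts cancel against the holomorphic Eisenstein coefficients.

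\emph{Step 4: summation.} Summing over $i$ using linearity gives the formula for general $f$, including old components via degeneracy compatibility.

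The main obstacle is the constant bookkeeping in Step 3 together with the vanishing in Step 2 at finite places. Rather than proving the finite-place vanishing directly, I would first take the Bruinier--Yang route, using $(D_0,2ND(f))=1$ to make the finite intersections computable from Gross--Zagier local formulas.
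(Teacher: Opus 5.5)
Your main route (Steps 1--4) has a genuine gap, and it inverts the paper's logic. The paper derives Corollary \ref{corzero} \emph{from} the theorem, and derives Gross--Zagier--Zhang from the theorem \emph{plus} modularity; you use both as inputs.

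In Step 2, Borcherds products exist only for weight $1/2$ input, i.e.\ $\kappa=1$. For $\kappa>1$ the only substitute is the meromorphic form $\Phi^{(\kappa)}(z,f)$ of weight $2\kappa$. This shows merely that $Z_\kappa(f)\in\mathrm{Pin}^{\mathrm{alg}}$ for $f$ with algebraic coefficients (Proposition \ref{theweak}). Vanishing of $Z_\kappa(f)$ in $\CH^\kappa(\mathcal Y)$ is the open Conjecture \ref{conjsec}. The vanishing also does not happen place by place. For weakly holomorphic $f$, Lemma \ref{leminfinite} with $\xi f=0$ gives the archimedean height $(-1)^\kappa\frac{m_0^{(\kappa-1)/2}\deg Z(U)}{2}CT_f$, which is generally nonzero. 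It is cancelled only by the finite intersection numbers, which you never compute. So Step 2 is not a reduction; it is essentially the whole proof.

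Step 3 fails for a similar reason. Theorem \ref{thefir} gives modularity only in $\mathrm{Heeg}^{\mathrm{alg}}_\kappa(\Cl(X))$. The height pairing lives on the Chow group and is not known to vanish on $\mathrm{Pin}^{\mathrm{alg}}$; the inclusion $\mathrm{Pin}^{\mathrm{alg}}\subseteq\mathrm{Pin}$ is itself conjectural. So you cannot replace $Z_\kappa(f_i)$ by $Z^{G_i}_\kappa(m_0,\mu_0)/c_{g_i}(m_0,\mu_0)$ inside $\langle\cdot,\cdot\rangle$. That would require the full Conjecture \ref{confirst}, a Gross--Kohnen--Zagier statement in the Chow group relating different discriminants, because Zhang's formula only pairs the $G$-component of one Heegner cycle with itself. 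Zhang's formula is also limited to newforms, whereas the theorem covers arbitrary $\xi_{3/2-\kappa}f$. Finally, the $L$-values appearing there are $L'(G,\kappa)L(G,\chi_{D_0},\kappa)$, not $L'(G,\chi_{D_0},\kappa)L(G,\kappa)$.

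The Bruinier--Yang route you mention at the end is the paper's proof, and it is unconditional. As written, though, it omits the higher-weight ingredients that make it work:
\begin{enumerate}
\item The Green current of $Z_\kappa(f)$ is $(-1)^\kappa\Phi^j(z,f)$ (Lemma \ref{lemgreen}, via Proposition \ref{protwo}). This is the regularized lift of $R^jf$ against the Siegel theta function for $\kappa$ odd, or the Millson theta function for $\kappa$ even. Hence the archimedean height is $(-1)^\kappa\frac{m_0^{(\kappa-1)/2}}{2}\Phi^j(Z(U),f)$.
\item Theorem \ref{TheCM} writes this CM value as $\deg Z(U)\,CT_f$ plus the $L'$ term. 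Here $CT_f$ is the constant term of $\langle f^+,[\Theta_{\mathcal P},\mathcal E_{\mathcal N}]_j\rangle$ (resp.\ $\langle f^+,[\tilde\Theta_{\mathcal P},\mathcal E_{\mathcal N}]_j\rangle$), a Rankin--Cohen bracket rather than $\theta\otimes\mathcal E$.
\item At finite primes, the cycles over $\bar x_1,\bar x_0$ meet with the extra weight $(-1)^\kappa P_{\kappa-1}(n/\sqrt{D_0D_1})$ from (\ref{equthir}). The arithmetic degrees come from the decomposition (\ref{equfirst}) together with $\widehat{\deg}\,\mathcal Z(m,\mathbf a,\mu)=-\frac{\deg Z(U)}{2}\kappa(m,\mu)$ from (\ref{equsec}). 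This is where $(D_0,2ND(f))=1$ is used, and it yields Proposition \ref{profinite}.
\item The two $CT_f$ terms cancel exactly because of the identity $P_n=\beta_n$, which matches the Legendre weights with the Rankin--Cohen coefficients.
\end{enumerate}
With these pieces, no spectral decomposition, Zhang formula or modularity is needed, and Corollary \ref{corzero} becomes an output rather than an input.
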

\begin{remark}
\
\begin{enumerate}
\item The case where $\kappa$ is odd was proved in \cite[Section 6]{BEY}.
\item The twisted case has been studied by the author in \cite{Du}.
\end{enumerate}
\end{remark}

\begin{corollary}\label{corzero}
 For any $f \in M^!_{3/2-\kappa, \bar\rho_L}$, we have
\begin{equation}
\langle  Z_{\kappa}(f),  Z_{ \kappa}(U)\rangle=0.
\end{equation}
\end{corollary}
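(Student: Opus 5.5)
The corollary should follow directly from Theorem \ref{maintheo}, applied to the weakly holomorphic form $f$ viewed as a harmonic Maass form. Since $M^!_{3/2-\kappa, \bar\rho_L} \subset H_{3/2-\kappa, \bar\rho_L}$ consists exactly of the harmonic forms with vanishing non-holomorphic part, we have $f = f^+$ and
\[
\xi_{3/2-\kappa}(f) = 0 .
\]
The Shimura lift $Sh_{m_0,\mu_0}$ is linear, so $Sh_{m_0,\mu_0}(\xi_{3/2-\kappa} f) = 0$ in $S_{2\kappa}(\Gamma_0(N))$. Its $L$-function is therefore identically zero, and so is its central derivative $L'(0,\kappa)$. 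Substituting this into the formula of Theorem \ref{maintheo} gives $\langle Z_\kappa(f), Z_\kappa(U)\rangle = 0$.

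The one issue is the hypothesis $(D_0, 2ND(f)) = 1$ in Theorem \ref{maintheo}. For a fixed $f$, only finitely many $m$ have $c^+(-m,\mu) \neq 0$, so $D(f)$ involves finitely many primes. Hence the coprimality condition excludes only a thin set of fundamental discriminants $D_0$. This matches the paper's wording that the vanishing holds ``for almost all Heegner cycles $Z_\kappa(U)$''.

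I would read the corollary as holding under the same standing coprimality assumption, and state that explicitly. If one wants to remove it, the plan is to look back into the proof of Theorem \ref{maintheo}. There the height presumably decomposes into a sum of local heights: an archimedean part given by a regularized theta lift/CM value, plus finite-place contributions. The archimedean part is then compared with the derivative of a Rankin–Selberg-type integral $\langle g_U, \xi f\rangle$-type pairing, which vanishes identically when $\xi f = 0$. The finite-place terms would be handled by the same arguments, which only use $f$ through its principal part.

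The main obstacle is justifying the extension to all $U$, that is, controlling the local heights at primes dividing $\gcd(D_0, 2ND(f))$. For the corollary as used in the paper, the direct specialization of Theorem \ref{maintheo} suffices.
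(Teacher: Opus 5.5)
Your proposal is correct and is the paper's argument: the corollary is obtained by specializing the height formula (Theorem~\ref{maintheo}, equivalently Theorem~\ref{mainthe}) to a weakly holomorphic $f$. There $\xi_{3/2-\kappa}f=0$, so $Sh_{m_0,\mu_0}(\xi_{3/2-\kappa}f)=0$ and its central $L$-derivative vanishes. You are also right to state the inherited coprimality hypothesis $(D_0,2ND(f))=1$ explicitly: it enters through Proposition~\ref{profinite}, the paper leaves it implicit, and the paper itself describes the vanishing as holding only for almost all $Z_\kappa(U)$.
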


This corollary provides a weak version of Conjecture \ref{conjsec}, i.e., $ Z_{\kappa}(f)$ vanishes for any $f \in M^!_{3/2-\kappa, \bar\rho_L}$.

\begin{proposition}
\begin{equation}
\text{Theorem \ref{maintheo} }\overset{\text{Modularity}}{\Longrightarrow} \text{Gross-Zagier-Zhang formua}.
\end{equation}
\end{proposition}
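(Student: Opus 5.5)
We want to show that Theorem \ref{maintheo}, combined with the modularity statement (Conjecture \ref{confirst}, or its proven weak form Theorem \ref{thefir}), recovers Zhang's higher weight Gross--Zagier formula for a newform $G\in S_{2\kappa}(\Gamma_0(N))$. The plan is to specialize Theorem \ref{maintheo} to the particular harmonic form $f$ attached to $G$, then use modularity to identify $Z_\kappa(f)$ with the $G$-isotypic part of a Heegner cycle.

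\textbf{Step 1: evaluate the analytic side.} Fix the newform $g\in S_{1/2+\kappa,\rho_L}$ with $\mathrm{Sh}_{m_0,\mu_0}$ sending it to (a multiple of) $G$. Choose $f\in H_{3/2-\kappa,\bar\rho_L}$ with $\xi_{3/2-\kappa}(f)=g/\langle g,g\rangle_{Pet}$. Then
$$\mathrm{Sh}_{m_0,\mu_0}(\xi_{3/2-\kappa}f)=\frac{c_g(m_0,\mu_0)}{\langle g,g\rangle_{Pet}}\,G,$$
up to the normalization of the Shimura lift, where $c_g(m_0,\mu_0)$ is the $(m_0,\mu_0)$ Fourier coefficient of $g$. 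Theorem \ref{maintheo} then gives
$$\langle Z_\kappa(f),Z_\kappa(U)\rangle=C\,\frac{c_g(m_0,\mu_0)}{\langle g,g\rangle_{Pet}}\,L'(G,\kappa),$$
with $C$ the explicit constant in that theorem.

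Next, factor $L'(G,\kappa)$ as the derivative of $L(G,s)L(G,\chi_{D_0},s)$ by a Waldspurger/Kohnen--Zagier-type argument. Since $\mathrm{sign}$ considerations force $L(G,\chi_{D_0},\kappa)$-type factors to appear, the coefficient $c_g(m_0,\mu_0)^2$ is proportional to $L(G,\chi_{D_0},\kappa)\langle g,g\rangle/\langle G,G\rangle$. This is the standard Kohnen--Zagier formula in the vector-valued setting.

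\textbf{Step 2: evaluate the geometric side.} Write $Z_\kappa(U)=Z_\kappa(m_0,\mu_0)$ and decompose it into Hecke eigencomponents in the image of $\phi_\kappa$. By part (2) of Conjecture \ref{confirst}, the $G$-component of the $(m_0,\mu_0)$ coefficient of $\phi_\kappa$ is $c_g(m_0,\mu_0)Z_\kappa(f)$. Thus $Z_\kappa(m_0,\mu_0)^G=c_g(m_0,\mu_0)Z_\kappa(f)$.

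Since Hecke operators are self-adjoint for Zhang's height pairing, the different eigencomponents are orthogonal. Hence
$$\langle Z_\kappa(U)^G,Z_\kappa(U)^G\rangle=c_g(m_0,\mu_0)\langle Z_\kappa(f),Z_\kappa(U)\rangle.$$
Substituting Step 1 expresses the height of $Z_\kappa(U)^G$ as a constant times $c_g(m_0,\mu_0)^2L'(G,\kappa)/\langle g,g\rangle$. By the Kohnen--Zagier identity, this equals a constant times $L'(G,\kappa)L(G,\chi_{D_0},\kappa)/\langle G,G\rangle$, which is the Gross--Zagier--Zhang formula for $L'(G\otimes\theta_{K},\kappa)$ with $K=\Q(\sqrt{D_0})$.

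\textbf{Main obstacle.} The hard step is justifying the isotypic decomposition and orthogonality. This means showing that the pairing computed in $\CH^\kappa(\mathcal Y)$ descends to the quotient where Theorem \ref{thefir} holds, i.e.\ that algebraic principal cycles pair to zero with $Z_\kappa(U)$. I would handle this using Corollary \ref{corzero}: cycles $Z_\kappa(f')$ for weakly holomorphic $f'$ are orthogonal to $Z_\kappa(U)$, so the modularity in $\mathrm{Heeg}^{\mathrm{alg}}_\kappa(\Cl(X))$ suffices for pairing against $Z_\kappa(U)$.

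Tracking the normalizing constants, namely the factor $2\sqrt N\Gamma(\kappa-\tfrac12)/((4\pi)^{\kappa-1}\pi^{3/2})$ against the Kohnen--Zagier and Shimura-lift normalizations, is routine but delicate.
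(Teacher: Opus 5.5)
Steps 1 and 2 follow the paper's own argument. You take $f$ with $\xi_{3/2-\kappa}(f)=g/\langle g,g\rangle_{Pet}$ and use part (2) of Conjecture~\ref{confirst} to get $Z_\kappa(m_0,\mu_0)^G=c(m_0,\mu_0)Z_\kappa(f)$. Orthogonality of Hecke-isotypic components then reduces $\langle Z_\kappa(U)^G,Z_\kappa(U)^G\rangle$ to $c(m_0,\mu_0)\langle Z_\kappa(f),Z_\kappa(U)\rangle$. You apply Theorem~\ref{maintheo} and turn $c(m_0,\mu_0)^2/\langle g,g\rangle_{Pet}$ into $L(G,\chi_{D_0},\kappa)/\|G\|^2$ via Waldspurger/Kohnen--Zagier. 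One small correction: $L'(G,\kappa)L(G,\chi_{D_0},\kappa)$, not $L'(G,\kappa)$ itself, is the derivative of $L(G,s)L(G,\chi_{D_0},s)$ at $s=\kappa$, because $L(G,\kappa)=0$ in this setting.

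What does not work is your ``main obstacle'' paragraph, where you claim the proven weak form, Theorem~\ref{thefir}, already suffices because of Corollary~\ref{corzero}.

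\begin{itemize}
\item Proposition~\ref{theweak} only gives the inclusion $\{Z_\kappa(f') : f'\in M^!_{3/2-\kappa,\bar\rho_L}\}\subseteq\mathrm{Pin}^{\mathrm{alg}}$. But $\mathrm{Pin}^{\mathrm{alg}}$ is defined through residue divisors of algebraic meromorphic modular forms of weight $2\kappa$, and nothing shows that every algebraically principal cycle has the form $Z_\kappa(f')$.
\item Corollary~\ref{corzero} also silently carries the coprimality hypothesis of Theorem~\ref{maintheo}.
\item Showing that all of $\mathrm{Pin}^{\mathrm{alg}}$ has height zero against $Z_\kappa(U)$ is essentially the open conjecture $\mathrm{Pin}^{\mathrm{alg}}\subseteq\mathrm{Pin}$.
\item For example, nothing in the paper excludes $Z_\kappa(f)$ itself being algebraically principal. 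In that case Theorem~\ref{thefir} says nothing about the $G$-component, while Theorem~\ref{maintheo} gives $\langle Z_\kappa(f),Z_\kappa(U)\rangle\neq 0$ whenever $c(m_0,\mu_0)L'(G,\kappa)\neq 0$.
\end{itemize}

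So, as the paper does (``assuming modularity''), you must assume the full Conjecture~\ref{confirst} in $\CH^\kappa(\mathcal Y)$, where the height pairing and the Hecke decomposition live. Under that assumption the descent step is unnecessary and your argument is complete.
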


\begin{theorem}[Gross-Zagier-Zhang formula in \cite{Zhang}]
\begin{equation}
\langle Z_{ \kappa}^G(m, \mu), Z_{\kappa}^G(m, \mu)\rangle=
\frac{(2\kappa-2)!\sqrt{N} |m_0|^{\kappa-\frac{1}{2}}}{2^{4\kappa-3}\pi^{\kappa+1}\parallel G\parallel^2}
L'(G,  \kappa).
\end{equation}
\end{theorem}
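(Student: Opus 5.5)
The plan is to derive the Gross–Zagier–Zhang formula from Theorem \ref{maintheo} by specializing $f$ to the form attached to the newform $G$, and then using modularity (Theorem \ref{thefir}, i.e.\ Conjecture \ref{confirst}(2)) to identify $Z_\kappa(f)$ with the $G$-isotypic component of a Heegner cycle.

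First, fix $(m_0,\mu_0)$ with fundamental discriminant $D_0=-4Nm_0$, and take $g\in S_{1/2+\kappa,\rho_L}$ to be the newform with $\mathrm{Sh}_{m_0,\mu_0}(g)=c\,G$. Choose $f\in H_{3/2-\kappa,\bar\rho_L}$ with $\xi_{3/2-\kappa}f=g/\langle g,g\rangle_{Pet}$. Then Theorem \ref{maintheo} gives
\[
\langle Z_\kappa(f),Z_\kappa(U)\rangle=\frac{2\sqrt N\,\Gamma(\kappa-\tfrac12)}{(4\pi)^{\kappa-1}\pi^{3/2}}\cdot\frac{c}{\langle g,g\rangle_{Pet}}\,L'(G,\kappa).
\]
Here $c$ is the first Fourier coefficient of $\mathrm{Sh}_{m_0,\mu_0}(g)$. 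By the standard normalization of the Shimura lift, $c$ equals the coefficient $b(m_0,\mu_0)$ of $g$, possibly up to an explicit factor.

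Second, use the modularity $\phi_\kappa^G=g\otimes Z_\kappa(f)$ and compare the $q^{m_0}e_{\mu_0}$ coefficients. This gives $Z_\kappa^G(m_0,\mu_0)=b(m_0,\mu_0)\,Z_\kappa(f)$. Since the height pairing respects the Hecke decomposition, and the $G$-components for distinct eigenforms are orthogonal, we obtain
\[
\langle Z^G_\kappa(m_0,\mu_0),Z^G_\kappa(m_0,\mu_0)\rangle=b(m_0,\mu_0)\langle Z_\kappa(f),Z_\kappa(m_0,\mu_0)\rangle
=\frac{|b(m_0,\mu_0)|^2}{\langle g,g\rangle_{Pet}}\cdot(\text{const})\,L'(G,\kappa).
\]
The third step is to eliminate $|b(m_0,\mu_0)|^2/\langle g,g\rangle_{Pet}$ with the Waldspurger/Kohnen–Zagier formula in the form of Gross–Kohnen–Zagier for $\rho_L$. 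This expresses the ratio as
\[
\frac{(\kappa-1)!\,|D_0|^{\kappa-1/2}}{2^{\nu}\pi^{\kappa}\,\|G\|^2}\,L(G,\chi_{D_0},\kappa).
\]
In the normalization of the paper, the twist $L(G,\chi_{D_0},\kappa)$ should combine with $L'(G,\kappa)$, which Zhang writes as $L'(G\otimes\chi_{D_0})$-type data. Alternatively, it is absorbed because the pairing is normalized with $m^{(\kappa-1)/2}$. Finally, simplify the constants using $\Gamma(\kappa-\tfrac12)(\kappa-1)!=\sqrt\pi\,(2\kappa-2)!/2^{2\kappa-2}$ and $|D_0|=4N|m_0|$, which should yield exactly $\frac{(2\kappa-2)!\sqrt N|m_0|^{\kappa-1/2}}{2^{4\kappa-3}\pi^{\kappa+1}\|G\|^2}$.

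The main obstacle I expect is this last bookkeeping: matching the Kohnen–Zagier constant and the Shimura-lift normalization so that the twisted central value is accounted for correctly and the powers of $2$, $\pi$ and $|m_0|$ come out exactly. If the constants do not match directly, I would calibrate them against the known case $\kappa=1$ (Gross–Kohnen–Zagier).
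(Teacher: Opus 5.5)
Your route is the paper's. You apply Theorem \ref{maintheo} to the $f$ with $\xi_{3/2-\kappa}f=g/\langle g,g\rangle_{Pet}$; with the paper's normalization the first coefficient of $\mathrm{Sh}_{m_0,\mu_0}(g)$ is exactly $b(m_0,\mu_0)$, so $\mathrm{Sh}_{m_0,\mu_0}(\xi_{3/2-\kappa}f)=b(m_0,\mu_0)G/\langle g,g\rangle_{Pet}$. You then read off $Z^G_\kappa(m_0,\mu_0)=b(m_0,\mu_0)Z_\kappa(f)$ from $\phi_\kappa^G=g\otimes Z_\kappa(f)$, use Hecke orthogonality, and finish with a Waldspurger formula. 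Two steps, however, do not hold as you state them.

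\textbf{The modularity input.} Theorem \ref{thefir} is not Conjecture \ref{confirst}(2). It gives $\phi_\kappa^G=g\otimes Z_\kappa(f)$ only in $\mathrm{Heeg}_\kappa^{\mathrm{alg}}(\Cl(X))$, i.e.\ modulo $\mathrm{Pin}^{\mathrm{alg}}$. The height pairing is defined on $\CH^\kappa(\mathcal{Y})$ and is not known to vanish on $\mathrm{Pin}^{\mathrm{alg}}$; that is exactly what Conjecture \ref{conjsec} and the conjecture $\mathrm{Pin}^{\mathrm{alg}}\subseteq\mathrm{Pin}$ are about. So in your argument the equality $\langle Z^G_\kappa(m_0,\mu_0),Z^G_\kappa(m_0,\mu_0)\rangle=b(m_0,\mu_0)\langle Z_\kappa(f),Z_\kappa(U)\rangle$ is conditional on Conjecture \ref{confirst}. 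The paper says so explicitly (``assuming modularity''), and you should state it that way too. If you want to avoid the conjecture, apply $\langle\cdot,Z_\kappa(U)\rangle$ before invoking modularity, as Gross--Kohnen--Zagier do. Corollary \ref{corzero} and Borcherds' criterion make $\sum_{m,\mu}\langle Z_\kappa(m,\mu),Z_\kappa(U)\rangle q^me_\mu$ a cusp form. The Bruinier--Funke pairing with $f$ then shows that its $g$-component is $\langle Z_\kappa(f),Z_\kappa(U)\rangle\,g$, and Hecke-equivariance of heights gives what you need, modulo the coprimality hypothesis of Theorem \ref{maintheo}.

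\textbf{The $L$-function identification.} Your last step does not close. Waldspurger, in the Gross--Kohnen--Zagier form, turns $b(m_0,\mu_0)^2/\langle g,g\rangle_{Pet}$ into a multiple of $L(G,\chi_{D_0},\kappa)/\|G\|^2$. What you actually obtain is therefore a constant times $L'(G,\kappa)L(G,\chi_{D_0},\kappa)/\|G\|^2$.
\begin{itemize}
\item This central value cannot be ``absorbed'' by the $m^{(\kappa-1)/2}$ normalization, which only produces powers of $m_0$, and it is not an $L'(G\otimes\chi_{D_0})$-term.
\item Indeed, in your own derivation $L(G,\chi_{D_0},\kappa)=0$ forces $b(m_0,\mu_0)=0$, hence a vanishing left-hand side, so the right-hand side must carry this factor.
\item The correct reading is that $L'(G,\kappa)$ in the statement is Zhang's $L'(G,\chi,\kappa)$ for trivial $\chi$, i.e.\ $\frac{d}{ds}\bigl(L(G,s)L(G,\chi_{D_0},s)\bigr)\big|_{s=\kappa}$.
\item This equals $L'(G,\kappa)L(G,\chi_{D_0},\kappa)$ because $L(G,\kappa)=0$: the newforms in the Shimura image of $S_{1/2+\kappa,\rho_L}$ (Jacobi forms of weight $\kappa+1$ and index $N$) have root number $-1$.
\end{itemize}
Finally, calibrating against $\kappa=1$ cannot determine the $\kappa$-dependent powers of $2$, $\pi$ and $N$. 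These have to come from the explicit Waldspurger constant and the comparison of $\langle g,g\rangle_{Pet}$ with the Jacobi-form norm. The paper also leaves this constant bookkeeping implicit, so there you are on par with it. The identification of the $L$-function, however, is a step you must actually carry out.
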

\begin{remark}
 This work treats only the case of trivial character. The above formula generalizes the classical Gross--Zagier formula \cite{GZ}, which corresponds to the case $\kappa = 1$.
\end{remark}

\subsection{Plan of the Proof}
\

In Section \ref{Sec1}, we review foundational results on modular forms and Heegner divisors.

In Section \ref{Sec2}, we introduce the regularized theta lift, with particular emphasis on the lift associated to non-holomorphic Hejhal-Poincaré series. This construction yields higher Green functions for higher Heegner cycles.

In Section \ref{Sec3}, we recall the moduli stacks of Heegner divisors and the height pairing of Heegner cycles, as developed by Zhang \cite{Zhang}. This global pairing decomposes as a sum of local intersection numbers.

In Section \ref{Sec4}, we compute the local intersection numbers $\langle Z_{\kappa}(f), Z_{\kappa}(U) \rangle_p$ at each prime $p$, including both archimedean and non-archimedean contributions.

In Section \ref{Sec5}, we combine the local results to complete the proof of Theorem \ref{maintheo}, thereby obtaining an explicit formula for the global height pairing $\langle Z_{\kappa}(f), Z_{\kappa}(U) \rangle$.

In Section \ref{Sec6}, we investigate the modularity of Heegner cycles and prove Theorem \ref{thefir}. Furthermore, we reduce Conjecture \ref{confirst} to Conjecture \ref{conjsec}, establishing that the modularity property hinges on the vanishing of $Z_{\kappa}(f)$ for all weakly holomorphic modular forms $f$.

%
%
%
%
%
%
%
%

\section{Preliminaries}\label{Sec1}

Let $\widetilde{\SL}_2(\R)$ be the metaplectic double cover of $\SL_2(\R)$, consisting of pairs $(g, \phi(g, \tau))$, where $g = \begin{pmatrix}a & b \ c & d\end{pmatrix} \in \SL_2(\R)$ and $\phi(g, \tau)$ is a holomorphic function on the upper half-plane $\H$ satisfying $\phi(g, \tau)^2 = j(g, \tau) = c\tau + d$. Let $\Gamma' = \Mp_2(\Z)$ be the preimage of $\Gamma = \SL_2(\Z)$ in $\widetilde{\SL}_2(\R)$. Then $\Gamma'$ is generated by
$$
S=\left( \kzxz {0} {-1} {1} {0}, \sqrt \tau \right)  \quad  T= \left( \kzxz {1} {1} {0} {1} , 1 \right).
$$

Let $(V, Q)$ be a quadratic space of signature $(p, q)$ and $L \subset V$ an even lattice. Denote by $L^\sharp$ the dual lattice of $L$. The quadratic form on $L$ induces a $\Q/\Z$-valued quadratic form on the discriminant group $L^\sharp / L$. Let ${e_\mu \mid \mu \in L^\sharp / L}$ be the standard basis of the group algebra $S_L = \C[L^\sharp / L]$. The Weil representation $\rho_L$ of $\Gamma'$ on $\C[L^\sharp / L]$ (see \cite{Bo}) is defined by
\begin{eqnarray} \label{eq:WeilRepresentation}
&\rho_{L} (T)e_{\mu}&=e(Q(\mu))e_{\mu} ,\\
&\rho_{L}(S) e_{\mu}&= \frac{e(-\frac{p-q}8)}{\sqrt{\vert L^{\sharp} / L \vert}}\sum \limits_{\mu^{\prime} \in L^{\sharp} / L }e(-(\mu,
\mu^{\prime}))e_{\mu^{\prime}}\nonumber .
\end{eqnarray}

\subsection{Modular Forms}

A twice continuously differentiable function $f: \mathbb{H} \rightarrow \mathbb{C}[L^{\sharp}/L]$ is called a weak Maass form of weight $k \in \frac{1}{2}\mathbb{Z}$ and representation $\rho_L$ if it satisfies:
\begin{enumerate}
    \item $f \mid_{k, \rho_L} \gamma = f$ for all $\gamma \in \Gamma'$;
    \item There exists $\lambda \in \mathbb{C}$ such that $\Delta_k f = \lambda f$;
    \item There exists $C > 0$ such that $f(\tau) = O(e^{Cv})$ as $v \rightarrow \infty$, uniformly in $u$.
\end{enumerate}

The slash operator is defined by:
\[
f \mid_{k, \rho_L} \gamma(\tau) = \phi(\tau)^{-2k} \rho_L^{-1}(\gamma') f(\gamma \tau).
\]

When $\lambda = 0$, $f$ is called a harmonic weak Maass form. Denote by $H_{k, \rho_L}$ the space of such forms. The principal part of $f$ is defined as:
\[
P_f(\tau) = \sum_{\mu} \sum_{n \leq 0} c(n, \mu) q^n e_\mu.
\]

Every $f \in H_{k, \rho_L}$ admits a unique decomposition $f = f^+ + f^-$ with Fourier expansions:
\begin{align}
f^+ & = \sum_{\mu} \sum_{n \gg -\infty} c^+(n, \mu) e(n\tau) e_\mu, \\
f^- & = \sum_{\mu} \sum_{n < 0} c^-(n, \mu) \Gamma(1 - k, 2\pi |n|v) e(n\tau) e_\mu,
\end{align}
where $\Gamma(a, x) = \int_x^\infty e^{-t} t^{a-1} dt$ is the incomplete Gamma function.

For any field $F$, let $H_{k, \rho_L}(F)$ denote the subspace of forms whose principal part has coefficients in $F$.

The $\xi$-operator $\xi_k: H_{k, \overline{\rho}_L} \rightarrow S_{2-k, \rho_L}$ is defined by:
\[
\xi_k(f) = 2i v^k \frac{\overline{\partial f}}{\partial \bar{\tau}},
\]
giving the exact sequence:
\[
0 \rightarrow M_{k, \overline{\rho}_L}^! \rightarrow H_{k, \overline{\rho}_L} \xrightarrow{\xi_k} S_{2-k, \rho_L} \rightarrow 0,
\]
where $M_{k, \overline{\rho}_L}^!$ is the space of weakly holomorphic modular forms.

The Maass lowering and raising operators are defined as:
\[
L_k = -2i v^2 \frac{\partial}{\partial \bar{\tau}}, \quad R_k = 2i \frac{\partial}{\partial \tau} + k v^{-1}.
\]

The standard scalar product on $\mathbb{C}[L^{\sharp}/L]$ is:
\[
\left\langle \sum_{\mu} f_\mu e_\mu, \sum_{\mu} g_\mu e_\mu \right\rangle = \sum_\mu f_\mu g_\mu.
\]

For $f, g \in M_{k, \rho_L}$, the Petersson scalar product is:
\[
\langle f, g \rangle_{\mathrm{Pet}} = \int_{\mathcal{F}} \langle f(\tau), \overline{g(\tau)} \rangle v^k \, \mu(\tau),
\]
where $\mu(\tau) = \frac{du \, dv}{v^2}$ is the hyperbolic measure.

Let $\Gamma'' \subseteq \Gamma'$ be a congruence subgroup and $j \in \mathbb{Z}_{\geq 0}$. For $f \in M_k(\Gamma'')$ and $g \in M_l(\Gamma'')$, the $j$-th Rankin-Cohen bracket is:
\[
[f, g]_j = \sum_{s=0}^j (-1)^s \binom{k + j - 1}{s} \binom{l + j - 1}{j - s} f^{(j-s)} g^{(s)} \in M_{k + l + 2j}(\Gamma''),
\]
where $g^{(s)} = \frac{1}{(2\pi i)^s} \frac{\partial^s g}{\partial \tau^s}$.

\subsection{Eisenstein Series}

Let $V$ be a quadratic space of dimension $m$. Define the character
\[
\chi_V(x) = (x, (-1)^{\frac{m(m-1)}{2}} \det(V))_{\mathbb{A}},
\]
where $\det(V)$ denotes the Gram determinant of $V$.

For any standard section $\Phi(g, s)$ in the induced representation $I(s, \chi_V)$, define the Eisenstein series by
\[
E(g, s, \Phi) = \sum_{\gamma \in P(\mathbb{Q}) \backslash \widetilde{\SL}_2(\mathbb{Q})} \Phi(\gamma g, s),
\]
where $P(\mathbb{Q})$ is the parabolic subgroup.

There exists an equivalent map
\[
S(V) \rightarrow I(s_0, \chi_V), \quad s_0 = \frac{m}{2} - 1,
\]
such that
\[
\Phi(g, s_0) = \omega(g) \varphi(0),
\]
where $\omega$ denotes the Weil representation.

Let $\Phi^l_\infty$ be the unique archimedean standard section satisfying
\[
\Phi^l_\infty\left( \begin{pmatrix} \cos\theta & \sin\theta \\ -\sin\theta & \cos\theta \end{pmatrix}, s \right) = e^{il\theta}, \quad \theta \in [0, 2\pi].
\]

For $\mu \in L^\sharp / L$, define $\varphi_\mu = \text{char}(L_\mu) \in S(V(\mathbb{A}_f))$. The weight $l$ Eisenstein series is defined by
\[
E_L(\tau, s; l) = v^{-\frac{l}{2}} \sum_\mu E(g_\tau, s, \Phi^l_\infty \otimes \lambda(\varphi_\mu)) \varphi_\mu.
\]

Identifying $\varphi_\mu$ with $e_\mu$, it follows from \cite[Section 2]{BY} that
\[
E_L(\tau, s; l) = \sum_{\gamma' \in \Gamma_\infty' \backslash \Gamma'} \left( v^{\frac{s+1-l}{2}} e_0 \right) \mid_{l, \rho_L} \gamma'.
\]

When $V$ is a quadratic space of signature $(0,2)$, the central value $E_L(\tau, s; 1)$ vanishes at $s_0 = 0$, while $E_L(\tau, s; -1)$ is holomorphic at $s_0$. These Eisenstein series satisfy the relation
\[
L_1 E_L(\tau, s; 1) = \frac{s}{2} E_L(\tau, s; -1),
\]
where $L_l = -2i v^2 \frac{\partial}{\partial \bar{\tau}}$ is the Maass lowering operator of weight $l$.

The derivative
\[
E'_L(\tau, s_0; 1) = \sum_{\mu \in L^\sharp / L} \sum_n b_\mu(n, v) q^n e_\mu
\]
is a harmonic weak Maass form of weight $1$. Denote its holomorphic part by
\begin{equation}\label{equFE}
\mathcal{E}_L(\tau) = \sum_\mu \sum_{n \geq 0} \kappa(n, \mu) q^n e_\mu.
\end{equation}

The coefficients are given explicitly (see \cite{KuIntegral, Scho}) by
\[
\kappa(n, \mu) = 
\begin{cases}
\lim_{v \to \infty} b_0(0, v) - \log v, & \text{if } n = 0 \text{ and } \mu = 0, \\
\lim_{v \to \infty} b_\mu(n, v), & \text{otherwise}.
\end{cases}
\]

Note that $\kappa(n, \mu) = 0$ for $n < 0$, and $\kappa(0, \mu) = 0$ for $\mu \neq 0$. These coefficients have been computed via local methods in \cite{BY}.

\subsection{Heegner Divisors}

In this work, we consider the quadratic space
\[
V = \left\{ w = \begin{pmatrix} x_1 & x_2 \\ x_3 & -x_1 \end{pmatrix} \in M_2(\mathbb{Q}) \right\},
\]
equipped with the quadratic form $Q(w) = N \cdot \det(w)$. Define the lattice
\[
L = \left\{ w = \begin{pmatrix} b & -\frac{a}{N} \\ c & -b \end{pmatrix} \in M_2(\mathbb{Z}) \,\middle|\, a, b, c \in \mathbb{Z} \right\},
\]
and its dual lattice
\[
L^\sharp = \left\{ w = \begin{pmatrix} \frac{b}{2N} & -\frac{a}{N} \\ c & -\frac{b}{2N} \end{pmatrix} \in M_2(\mathbb{Z}) \,\middle|\, a, b, c \in \mathbb{Z} \right\}.
\]

We identify
\[
\mathbb{Z}/2N\mathbb{Z} \cong L^\sharp / L, \quad r \mapsto \mu_r = \begin{pmatrix} \frac{r}{2N} & 0 \\ 0 & -\frac{r}{2N} \end{pmatrix}.
\]

Let $\D$ be the Hermitian domain of oriented negative 2-dimensional subspaces of $V(\mathbb{R})$. We identify $\D$ with $\mathbb{H} \cup \overline{\mathbb{H}}$ via
\[
z = x + iy \mapsto \mathbb{R} \cdot \mathfrak{R} \begin{pmatrix} z & -z^2 \\ 1 & -z \end{pmatrix} + \mathbb{R} \cdot \mathfrak{I} \begin{pmatrix} z & -z^2 \\ 1 & -z \end{pmatrix}.
\]

For $w = \begin{pmatrix} \frac{b}{2N} & -\frac{a}{N} \\ c & -\frac{b}{2N} \end{pmatrix} \in L^\sharp$, the associated CM point is
\[
z(w) = \frac{b}{2Nc} + \frac{\sqrt{b^2 - 4Nac}}{2N|c|} \in \mathbb{H}.
\]

For $\mu \in L^\sharp / L$ and $m \in Q(\mu) + \mathbb{Z}$ with $m > 0$, let
\[
L_\mu = L + \mu, \quad L_\mu[m] = \{ w \in L_\mu \mid Q(w) = m \}.
\]

The Heegner divisor on $X_0(N)$ is defined by
\begin{equation}\label{TwistedHeegner}
Z(m, \mu) := \sum_{w \in \Gamma_0(N) \setminus L_\mu[m]} z(w),
\end{equation}
which is defined over $\mathbb{Q}$.

Now let $U \subset V$ be a quadratic subspace of signature $(0,2)$, identified with an imaginary quadratic field $k = \mathbb{Q}(\sqrt{D_0})$. The two points $\{z_U^\pm\}$ correspond to the two orientations of $U(\mathbb{R})$.

The CM divisor is defined as
\[
Z(U) = k^\times \backslash \{z_U^\pm\} \times \mathbb{A}_{k,f}^\times / \widehat{\mathcal{O}}_k^\times \cong \{z_U^\pm\} \times \mathrm{Cl}_k,
\]
where $\mathrm{Cl}_k$ denotes the ideal class group of $k$.

When $D_0 = -4Nm_0$ is a fundamental discriminant, we have
\[
Z(U) \simeq Z(m_0, \mu_0),
\]
so $Z(U)$ is also a Heegner divisor.

\section{Regularized theta lift}\label{Sec2}
\subsection{Theta Functions}

For each $z \in \D$, we have the orthogonal decomposition
\[
V(\mathbb{R}) = z \oplus z^\perp.
\]
For any $w \in V(\mathbb{R})$, write $w = w_z + w_{z^\perp}$ accordingly.

Define the associated majorant
\[
(w, w)_z = (w_{z^\perp}, w_{z^\perp}) - (w_z, w_z),
\]
which is a positive definite quadratic form on $V(\mathbb{R})$. Consider the Gaussian
\[
\varphi_\infty(w, z) = e^{-\pi (w, w)_z} \in S(V(\mathbb{R})).
\]

For $\tau = u + iv \in \mathbb{H}$, let $g_\tau = \begin{pmatrix} 1 & u \\ 0 & 1 \end{pmatrix} \begin{pmatrix} v^{1/2} & 0 \\ 0 & v^{-1/2} \end{pmatrix}$. Under the Weil representation, define
\[
\varphi(w, \tau, z) = v^{1/4} \omega(g_\tau) \varphi_\infty(w, z) = v \cdot e\left( Q(w_{z^\perp}) \tau + Q(w_z) \bar{\tau} \right).
\]

The Siegel theta function is defined by
\[
\Theta(\tau, z) = \sum_{\mu \in L^\sharp / L} \sum_{w \in L_\mu} \varphi(w, \tau, z) e_\mu = v \sum_{\mu \in L^\sharp / L} \sum_{w \in L_\mu} e\left( Q(w_{z^\perp}) \tau + Q(w_z) \bar{\tau} \right).
\]
This is a modular form of weight $-\frac{1}{2}$, satisfying
\[
\Theta(\gamma \tau, z) = \phi(\tau)^{-1} \rho_L(\gamma') \Theta(\tau, z).
\]

More generally, for a quadratic space $V$ of signature $(n, 2)$, one can define a Siegel theta function of weight $\frac{n}{2} - 1$.

For $z = x + yi \in \mathbb{H}$, define the normalized vector
\[
\lambda(z) = \frac{1}{2\sqrt{N} y} \begin{pmatrix} -x & x^2 + y^2 \\ -1 & x \end{pmatrix}.
\]
For $w = \begin{pmatrix} \frac{b}{2N} & -\frac{a}{N} \\ c & -\frac{b}{2N} \end{pmatrix} \in L^\sharp$, define
\[
p_z(w) = -(w, \lambda(z)) = -\frac{1}{2\sqrt{N} y} \left( Nc|z|^2 - bx + a \right).
\]

The Millson theta function \cite{KM2} is defined as
\[
\Theta^{\mathcal{M}}(\tau, z) = v \sum_{\mu \in L^\sharp / L} \sum_{w \in L_\mu} p_z(w) e\left( Q(w_{z^\perp}) \tau + Q(w_z) \bar{\tau} \right) e_\mu,
\]
which is a modular form of weight $1/2$ transforming with the representation $\rho_L$.

Finally, the representation $\rho_L$ can be identified with the restriction to $\Gamma'$ of the complex conjugate of the Weil representation $\omega$ on $S(V(\mathbb{A}_f))$.

\subsection{Regularized Theta Lift}

Let $V$ be a quadratic space of signature $(n, 2)$, and let $f \in H_{1 - n/2, \bar\rho_L}$. The regularized theta integral is defined by
\[
\Phi(z, f) = \int_{\mathcal{F}}^{\mathrm{reg}} \langle f(\tau), \Theta(\tau, z) \rangle \, d\mu(\tau),
\]
interpreted as the constant term in the Laurent expansion.

More precisely, for $f \in H_{1/2, \bar\rho_L}$, define
\[
\Phi(z, f) = \mathrm{CT}_{s = 0} \lim_{T \to \infty} \int_{\mathcal{F}_T} \langle f(\tau), \Theta(\tau, z) \rangle \frac{1}{v^s} \, d\mu(\tau),
\]
where $\mathcal{F}_T = \{ \tau \in \mathbb{H} \mid |u| \leq \tfrac{1}{2},\; v < T,\; |\tau| > 1 \}$ is the truncated fundamental domain.

For $f \in H_{-1/2, \bar\rho_L}$, the regularized Millson theta integral \cite{BEY} is defined by
\[
\Phi^{\mathcal{M}}(z, f) = \int_{\mathcal{F}}^{\mathrm{reg}} \langle f(\tau), \Theta^{\mathcal{M}}(\tau, z) \rangle \, d\mu(\tau).
\]

For $f \in H_{1/2, \bar{\rho}_{L}}$, define the Heegner divisor
\[
Z(f) = \sum_{m < 0,\, \mu} c^+(m, \mu) Z(m, \mu),
\]
where $c^+(m, \mu)$ are the Fourier coefficients of the holomorphic part $f^+$.

\begin{theorem}[\cite{BO}, Proposition 5.2]
Let $f \in H_{1/2, \bar{\rho}_{L}}$. Then $\Phi(z, f)$ is smooth on $Y_0(N) \setminus Z(f)$ and has logarithmic singularities along the divisor $-2Z(f)$. Moreover, if $\Delta_z$ denotes the invariant Laplacian on $\mathbb{H}$, then
\[
\Delta_z \Phi(z, f) = c^+(0, 0).
\]
\end{theorem}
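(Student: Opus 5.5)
We prove the theorem (Bruinier–Ono, Proposition 5.2) by unfolding the theta integral near a point and comparing with the lift of an explicit Poincaré series. Fix $z_0\in Y_0(N)$.

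\emph{Step 1: isolate the possibly singular terms.} In $\Theta(\tau,z)$, a term with $w\in L_\mu$ has exponential decay in $v$ of the form $e^{-2\pi v (w,w)_z}$. After pairing with $f$, it can fail to be integrable at the cusp only when it meets a term of the principal part $c^+(m,\mu)q^m$ with $m<0$. For $z$ near $z_0$, only finitely many $w\in L_\mu$ have $Q(w)=-m$ and $Q(w_{z})$ small. Concretely, these are the vectors whose CM point $z(w)$ lies near $z_0$.

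\emph{Step 2: split the integral.} Write $f=f^++f^-$. The contribution of $f^-$, together with that of all $w$ whose CM point $z(w)$ stays away from $z_0$, converges locally uniformly in $z$ together with all derivatives. This uses the exponential decay of $\Gamma(1-k,2\pi|n|v)$ and the positivity of the majorant. Hence that part is smooth in $z$.

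For the remaining finitely many $w$, unfold the $\tau$-integral against the term $c^+(m,\mu)e(m\tau)$. Following Borcherds' method, the regularized integral
\[
\int_1^\infty v^{-s}\, e^{-4\pi v Q(w_z)}\, dv/v \cdots
\]
(with $Q(w_z)\le 0$ on the negative part) equals, up to an entire-in-$s$ correction, a function of the form
\[
-c^+(m,\mu)\log|Q(w_z)|+\text{smooth}.
\]
Here $|Q(w_z)|$ is proportional to $|z-z(w)|^2$ times a nonvanishing smooth factor near $z(w)$. Counting the two vectors $\pm w$, which give the same point, and passing to the $\Gamma_0(N)$-quotient, we obtain the singularity $-2c^+(m,\mu)\log|z-z(w)|^2$. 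Summing over $(m,\mu)$ identifies the singular part with $\log|\cdot|$ along $-2Z(f)$. Alternatively, one can write $f$ as a linear combination of Hejhal–Poincaré series $F_{m,\mu}(\tau,1-k/2)$ plus a cusp-free remainder and use the explicit unfolded lift of the Poincaré series, which is a Legendre function with exactly this logarithmic singularity.

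\emph{Step 3: the Laplacian.} On $Y_0(N)\setminus Z(f)$ we may differentiate under the regularized integral. The Siegel theta kernel satisfies the standard identity
\[
\Delta_z\Theta(\tau,z)=4\Delta_{\tau,-1/2}\Theta(\tau,z)
\]
for signature $(1,2)$, relating the weight $-1/2$ Laplacian in $\tau$ with the invariant Laplacian in $z$ (Kudla–Millson; Bruinier). We then integrate by parts on $\mathcal F_T$, writing $\Delta_\tau$ via $L$ and $R$. Since $f$ is harmonic, $\Delta_{1/2}f=0$, and the interior terms cancel. Only the boundary term at $v=T$ survives as $T\to\infty$.

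The boundary term picks out the constant term in $u$ of $\langle f,\Theta\rangle$. After taking $\mathrm{CT}_{s=0}$, the only contribution is the product of $c^+(0,0)$ with the $w=0$ term of the theta function, which is $v$. This yields the constant $c^+(0,0)$, with normalizations fixed so that $\Delta_z\Phi(z,f)=c^+(0,0)$.

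\emph{Main obstacle.} The delicate point is justifying the interchange of $\Delta_z$ with the regularization and the vanishing of all other boundary terms. In particular, one must check that the terms with $m<0$ contribute only $s$-holomorphic pieces with zero constant term, and then track the normalization constant exactly. I would handle this by carrying the factor $v^{-s}$ through Stokes' theorem and computing the boundary integral explicitly term by term.
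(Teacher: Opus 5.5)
Your overall plan is the standard Borcherds/Bruinier argument on which the cited result rests: isolate the finitely many $w$ with $z(w)$ near $z_0$ to get the logarithmic singularity, then obtain $\Delta_z\Phi$ from a differential equation for the theta kernel plus Stokes on $\mathcal{F}_T$. The paper itself gives no proof beyond the reference to Bruinier--Ono. Steps 1 and 2 are fine up to a sign slip: after the $u$-integration the surviving factor is $e^{4\pi v Q(w_z)}=e^{-4\pi v|Q(w_z)|}$, not $e^{-4\pi v Q(w_z)}$. The Poincar\'e-series alternative is not needed, and at weight $1/2$ the point $s_0=3/4$ lies outside the range $\mathrm{Re}\,s>1$ of convergence anyway. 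Step 3, however, does not work as written.

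\textbf{The kernel identity is wrong.} $\Delta_z\Theta=4\Delta_{\tau,-1/2}\Theta$ fails for the usual weight-$k$ Laplacian $\Delta_k=-R_{k-2}L_k$. Already on the $w=0$ term, $\Delta_z v=0$ while $\Delta_{-1/2}v=v/2$. Computing on a general term $v\,e(Q(w_{z^\perp})\tau+Q(w_z)\bar\tau)$, one finds instead
$\Delta_z\Theta=4(\Delta_{-1/2,\tau}-\tfrac12)\Theta=-4L_{3/2}R_{-1/2}\Theta$, with $\Delta_z=-y^2(\partial_x^2+\partial_y^2)$.
This is not cosmetic. Green's formula for weights $1/2$ and $-1/2$ reads
$\int\langle f,\Delta_{-1/2}\Theta\rangle\,d\mu=\int\langle\Delta_{1/2}f,\Theta\rangle\,d\mu+\tfrac12\int\langle f,\Theta\rangle\,d\mu+(\text{boundary})$.
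So with your identity, harmonicity of $f$ would leave $\Delta_z\Phi=2\Phi+\cdots$, and the interior terms would not cancel. With the corrected identity, move $L_{3/2}$ and then $R_{-1/2}$ onto $f$. Both moves are needed, since $L_{1/2}f=L_{1/2}f^-\neq 0$. The interior term then becomes $4\int\langle\Delta_{1/2}f,\Theta\rangle\,d\mu=0$.

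\textbf{The regularization bookkeeping is inconsistent.} If you carry $v^{-s}$ through Stokes and let $T\to\infty$ first, as the definition requires, the top-edge term carries a factor $T^{-s}$ and vanishes for $\mathrm{Re}\,s>0$. The constant then comes from the interior term created when the lowering operator hits $v^{-s}$ (note $-2iv^2\partial_{\bar\tau}v^{-s}=-s\,v^{1-s}$), paired against the $1/s$ pole of $\int^\infty v^{-1-s}\,dv$. It does not come from the boundary.

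The cleaner route is to use that $\Delta_z$ kills the $w=0$ term. Then $\langle f,\Delta_z\Theta\rangle$ decays exponentially at the cusp for $z\notin Z(f)$, and $\Delta_z\Phi=\int_{\mathcal F}\langle f,\Delta_z\Theta\rangle\,d\mu$ needs no regularization. The interchange of $\Delta_z$ with the integral is justified by
$\Phi(z,f)=\lim_{T\to\infty}\bigl(\int_{\mathcal F_T}\langle f,\Theta\rangle\,d\mu-c^+(0,0)\log T\bigr)$.
Stokes then leaves $-4\int_{-1/2}^{1/2}\langle f,R_{-1/2}\Theta\rangle(u+iT)\,du$, which tends to $-4c^+(0,0)R_{-1/2}(v)=-2c^+(0,0)$. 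What enters is $R_{-1/2}v=\tfrac12$, not the term $v$ itself, which would diverge.

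So with $\Delta_z=-y^2(\partial_x^2+\partial_y^2)$ one gets $\Delta_z\Phi=-2c^+(0,0)$. This is consistent with $\Phi=-4\log|\Psi|-2c^+(0,0)\log y+\cdots$ in the Borcherds-product case for weakly holomorphic $f$. The constant $c^+(0,0)$ in the statement therefore depends on the normalization of $\Delta_z$ used in the source. You should state that normalization explicitly rather than choose it so the answer comes out.
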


The function $\Phi(z, f)$ is a Green function for the divisor $Z(f) + C$, where $C$ is a divisor supported at the cusps.

\subsection{Non-holomorphic Hejhal--Poincar\'e Series}

Let $k \in \tfrac{1}{2}\mathbb{Z}$, and let $M_{\mu,\nu}(v)$ denote the standard Whittaker functions. Define
\[
\mathcal{M}_{s, k}(v) = v^{-\frac{k}{2}} M_{-\frac{k}{2}, s - \frac{1}{2}}(v).
\]

For $m > 0$ and $\mu \in L^\sharp/L$ with $m \equiv Q(\mu) \pmod{\mathbb{Z}}$, the non-holomorphic Hejhal--Poincar\'e series of index $(m, \mu)$ and weight $k$ is defined by \cite[Chapter 1]{Br} as
\[
F_{m,\mu}(\tau, s, k) = \frac{1}{2\Gamma(2s)} \sum_{\gamma \in \widetilde{\Gamma_\infty} \setminus \Gamma'} \left[ \mathcal{M}_{s, k}(4\pi m v) e(-mu) e_\mu \right] \mid_{k, \overline{\rho}_L} \gamma.
\]

Let $R_{k-2j}^j = R_{k-2} \circ \cdots \circ R_{k-2j}$ denote the $j$-fold composition of raising operators. Then
\[
\frac{1}{(4\pi m)^j} R_{k-2j}^j F_{m, \mu}(\tau, s_0 + j, k - 2j) = j! \, F_{m, \mu}(\tau, s_0 + j, k),
\]
where $s_0 = 1 - \frac{k}{2}$.

For $f \in H_{3/2 - \kappa, \overline{\rho}}$, define the twisted higher regularized theta lift by
\begin{equation}\label{equphij}
\Phi^j(z, f) = \frac{1}{(4\pi)^j} \times
\begin{cases}
\Phi(z, R^j_{1/2 - 2j} f), & \kappa = 2j + 1; \\
\Phi^{\mathcal{M}}(z, R^j_{-1/2 - 2j} f), & \kappa = 2j + 2.
\end{cases}
\end{equation}

This is a higher Green function for the divisor
\[
Z^j(f) = \sum_{m > 0,\, \mu \in L^\sharp/L} c^+(-m, \mu) m^j Z(m, \mu).
\]

Let $F_{m,\mu}(\tau) = F_{m,\mu}(\tau, s_0, k)$.

\begin{proposition}[\cite{BEY, Du}]\label{protwo}
Assume $F_{m,\mu} \in H_{3/2 - \kappa, \overline{\rho}}$. Then
\[
\Phi^j(z, F_{m,\mu}) = (-1)^{\kappa} \cdot 2m^{\frac{\kappa - 1}{2}} G_{N, \kappa}(z, Z(m, \mu)),
\]
where $F_{m,\mu} = F_{m,\mu}(\cdot, 1/4 + \kappa/2, 3/2 - \kappa)$.
\end{proposition}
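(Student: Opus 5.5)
We have to prove Proposition \ref{protwo}: the higher theta lift of the Hejhal--Poincaré series $F_{m,\mu}$ equals $(-1)^{\kappa}\,2m^{\frac{\kappa-1}{2}}$ times Zhang's higher Green function $G_{N,\kappa}(z, Z(m,\mu))$. The plan is to split by the parity of $\kappa$, as in the definition \eqref{equphij}, and in each case reduce to a lift of a Poincaré series of lower weight via the raising-operator identity.

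\textbf{Case $\kappa = 2j+1$.} Take $k = 3/2-\kappa = 1/2-2j$ in the raising identity; it gives $R^j_{1/2-2j}F_{m,\mu}(\tau,s,1/2-2j)$ as a multiple of $F_{m,\mu}(\tau,s+j,1/2)$. One has to be careful which weight plays the role of $k$ in that identity: the target here is weight $1/2$ and the source is weight $1/2-2j$. Then
\[
\Phi^j(z,F_{m,\mu}) = \frac{1}{(4\pi)^j}\,(4\pi m)^j\, j!\,\Phi\bigl(z, F_{m,\mu}(\cdot, s, 1/2)\bigr),
\]
evaluated at the spectral point $s = 1/4+\kappa/2 = \tfrac34 + j$. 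The lift of a weight-$1/2$ Poincaré series is computed by unfolding, exactly as in Bruinier's book: one unfolds against $\Theta(\tau,z)$ for $\Re s$ large and integrates over $v$. The Whittaker integral produces a Legendre function $Q_{s-1/2}$ of the hyperbolic cosine $1 + |z-z(w)|^2/(2\Im z\,\Im z(w))$, and the result is
\[
\Phi\bigl(z,F_{m,\mu}(\cdot,s,1/2)\bigr) = c(s)\sum_{w\in L_\mu[m]} Q_{s-1/2}\bigl(\cosh d(z,z(w))\bigr).
\]
The regularization is harmless here. The Poincaré series is square-integrable modulo its principal part, and analytic continuation to $s = \tfrac34 + j$ is justified by Bruinier's results since $\kappa \ge 2$. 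At that point $Q_{s-1/2} = Q_{\kappa-1}$, and Zhang's definition gives $G_{N,\kappa}(z, Z(m,\mu)) = \sum_{w\in\Gamma_0(N)\backslash L_\mu[m]}\sum_{\gamma\in\Gamma_0(N)} Q_{\kappa-1}(\ldots)$, which matches the sum above after folding the $\Gamma_0(N)$-orbits. The remaining task is to track the constant: $\Gamma$-factors from $\frac{1}{2\Gamma(2s)}$ and the Whittaker integral combine with $j!$ and $m^j = m^{(\kappa-1)/2}$ to give $2m^{(\kappa-1)/2}$ and the sign $(-1)^\kappa$.

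\textbf{Case $\kappa = 2j+2$.} This case is identical, except that $\Theta$ is replaced by the Millson theta function $\Theta^{\mathcal M}$, whose extra factor $p_z(w)$ is what produces the odd Legendre function under unfolding. The source weight is now $-1/2-2j$ and the raising lands in weight $-1/2$. Unfolding the Millson lift, as in the work of Bruinier--Ehlen--Yang, gives $\sum_w p_z(w)\cdot(\text{Whittaker integral})$; since $p_z(w)$ is proportional to $\sinh d(z,z(w))$ times a sign, this should again assemble to $Q_{\kappa-1}(\cosh d)$. Here one also has to check that the contributions of $w$ and $-w$ add rather than cancel, which the $e_\mu$ versus $e_{-\mu}$ symmetry of $\bar\rho_L$ ensures.

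\textbf{Main obstacle.} I expect the hard part to be the explicit $v$-integral in the Millson case: showing that $\int_0^\infty \mathcal M_{s,-1/2}(4\pi m v)\,p_z(w)\,e^{\ldots}\,v^{\ldots}\,dv$ is exactly a Legendre $Q_{\kappa-1}$. I would first do it via the Mellin-type formula $\int_0^\infty e^{-tv}M_{\nu,\mu}(v)v^{a}\,dv$, a hypergeometric ${}_2F_1$, followed by a quadratic transformation to $Q_{s-1/2}$, as in Bruinier's Theorem 2.14. After that, normalizing constants, including the sign $(-1)^\kappa$, are bookkeeping that I would check against the $\kappa=1$ case in Proposition 5.2 of \cite{BO}.
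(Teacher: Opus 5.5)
Your route is the one used in the sources the paper cites for this proposition (\cite{BEY} for $\kappa$ odd, \cite{Du} for $\kappa$ even); the paper itself gives no further argument. That route is: reduce by the raising identity to the weight $\tfrac12$ (resp.\ $-\tfrac12$) Poincar\'e series, unfold against the Siegel (resp.\ Millson) theta function, and recognize a Legendre function. However, the key identification step fails as you wrote it. After unfolding, the $u$-integral selects $Q(w)=m$. Since $Q(w_{z^\perp})-Q(w_z)=m\cosh 2d$ with $d=d(z,z(w))$, the $v$-integral is a Laplace transform of a Whittaker function:
\[
\int_0^\infty t^{-\frac14}M_{-\frac14,s-\frac12}(t)\,e^{-\frac t2\cosh 2d}\,\frac{dt}{t}
=\Gamma(s-\tfrac14)(\cosh d)^{\frac12-2s}\,{}_2F_1\bigl(s-\tfrac14,s+\tfrac14;2s;\cosh^{-2}d\bigr)
=\frac{2^{2s-\frac12}\Gamma(s-\frac14)\Gamma(2s)}{\sqrt\pi\,\Gamma(2s-\frac12)}\,Q_{2s-\frac32}(\cosh d).
\]
So the index is $2s-\tfrac32$, not $s-\tfrac12$. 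At $s=\tfrac34+j$ you get $Q_{2j}=Q_{\kappa-1}$, whereas your $Q_{s-1/2}$ would be $Q_{j+1/4}$. As a check, the Laplacians match: $4\bigl(s(1-s)-\tfrac3{16}\bigr)=s'(1-s')$ with $s'=2s-\tfrac12$. With the correct index the constant is transparent. Together with $\frac1{2\Gamma(2s)}$, the prefactor equals $\frac1{j!}$ at $s=\tfrac34+j$ and cancels the $j!$ from raising. Hence $m^{j}=m^{(\kappa-1)/2}$ comes only from the raising identity, and what remains is matching the normalization of $G_{N,\kappa}$ and the counting of $\pm w$ and stabilizers.

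In the Millson case your heuristic is also wrong: $p_z(w)$ is not proportional to $\sinh d$. One has $p_z(w)^2=Q(w_{z^\perp})=m\cosh^2 d$, and in fact $p_z(w)=-\operatorname{sgn}(c)\sqrt m\cosh d$; this is the value $p_{z_U^+}(w)=-\sqrt{m_0}$ used in Section~\ref{Sec4}. The Laplace transform of $t^{\frac14}M_{\frac14,s-\frac12}(t)$ carries $(\cosh d)^{-\frac12-2s}$, and the factor $\cosh d$ is exactly what turns it back into the same ${}_2F_1$ as above. So you again get $Q_{2s-3/2}$, now at $s=\tfrac54+j$, i.e.\ $Q_{2j+1}$, with prefactor again $\frac1{j!}$. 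The $\sqrt m$ supplies the missing half power in $m^{(\kappa-1)/2}=m^{j+\frac12}$. The parity of the index comes from the spectral point, not from $p_z$.

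Moreover, $\operatorname{sgn}p_z(w)=-\operatorname{sgn}(c)$ is constant on each sheet of $\{Q=m\}$, and $L_\mu[m]$ meets both sheets. The antisymmetry $c(n,-\mu)=-c(n,\mu)$ does make the $e_\mu$ and $e_{-\mu}$ terms add, as you say. But the result is proportional to $\sum_{w\in L_\mu[m]}\operatorname{sgn}(c)\,Q_{\kappa-1}(\cosh d(z,z(w)))$. Since $z(-w)=z(w)$ and $-w\in L_{-\mu}$, this is the Green function of $Z^+(m,\mu)-Z^+(m,-\mu)$, where $Z^+$ collects the points coming from $c>0$. It is not the Green function of $Z(m,\mu)=Z^+(m,\mu)+Z^+(m,-\mu)$. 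So for $\kappa$ even the identity with $G_{N,\kappa}(z,Z(m,\mu))$ holds only if the CM cycle over $z(w)$ is oriented by $w$; replacing $\sqrt D$ by $-\sqrt D$ changes $S_\kappa$ by $(-1)^{\kappa-1}$. Your proof must make that convention explicit. For $\kappa$ odd no sign arises.
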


\section{ Heegner cycles and height pairing}\label{Sec3}

\subsection{Heegner Cycles}

Let $\kappa$ be a positive integer. We recall the construction of Kuga-Sato varieties $\mathcal{Y} = \mathcal{Y}_\kappa(N)$ and their CM cycles, following \cite{Zhang}.

Let $D$ be a discriminant and $E$ an elliptic curve with complex multiplication by $\sqrt{D}$. Define the divisor
\[
Z(E) = \Gamma - (E \times \{0\}) + D(\{0\} \times E)
\]
on $E \times E$, where $\Gamma$ denotes the graph of multiplication by $\sqrt{D}$. Then $Z(E)^{\kappa-1}$ defines a cycle in $E^{2\kappa-2}$ of codimension $\kappa-1$.

Let $G_{2\kappa-2}$ be the symmetric group on $2\kappa-2$ letters, acting on $E^{2\kappa-2}$ by permuting factors. Define
\[
S_\kappa(E) = c \sum_{\sigma \in G_{2\kappa-2}} \mathrm{sgn}(\sigma) \, \sigma^*(Z(E)^{\kappa-1}),
\]
where $c \in \mathbb{R}$ is chosen so that the self-intersection of $S_\kappa(E)$ on each fiber equals $(-1)^{\kappa-1}$.

Now let $N$ be a product of two coprime integers $\geq 3$. The Kuga–Sato variety $\mathcal{Y}$ is defined as a canonical resolution of the $(2\kappa-2)$-fold fiber product of the universal elliptic curve $\mathcal{E}(N)$ over the modular curve $\mathcal{X}(N)$.

For a CM point $y \in \mathcal{X}(N)$, define the CM cycle $S_\kappa(y)$ over $y$ to be $S_\kappa(\mathcal{E}_y)$ in $\mathcal{Y}$.

Let $\pi: \mathcal{X}(N) \to \mathcal{X}_0(N)$ be the projection map. For a CM point $x \in \mathcal{X}_0(N)$, write
\[
\pi^*(x) = \frac{w(x)}{2} \sum x_i, \quad w(x) = |\mathrm{Aut}(x)|.
\]
Define the cycle over $x$ by
\begin{equation}\label{defcycle}
S_\kappa(x) = \frac{1}{\deg(\pi)} \sum S_\kappa(x_i).
\end{equation}

We now define the higher Heegner cycles:
\begin{align}
Z_{\kappa}(m, \mu) &= m^{\frac{\kappa-1}{2}} \sum_{x \in Z(m, \mu)} S_\kappa(x), \\
Z_{\kappa}(U) &= m_0^{\frac{\kappa-1}{2}} \sum_{x \in Z(U)} S_\kappa(x), \\
Z_{\kappa}(f) &= \sum_{m > 0} c^+(-m, \mu) Z_{\kappa}(m, \mu).
\end{align}
Here $U$ is identified with an imaginary quadratic field of fundamental discriminant $D_0 = -4Nm_0$.

\subsection{Moduli Stacks}

Let $\mathcal{X}_0(N)$ be the moduli stack over $\mathbb{Z}$ classifying cyclic isogenies $\pi : E \rightarrow E'$ of degree $N$ between generalized elliptic curves, such that $\ker \pi$ meets every irreducible component of each geometric fiber. According to \cite{KM}, the stack $\mathcal{X}_0(N)$ is regular, flat over $\mathbb{Z}$, and smooth over $\mathbb{Z}[\frac{1}{N}]$.

Let $D = -4Nm$ be a discriminant and $\mathcal{O}_D = \mathbb{Z}[\frac{D + \sqrt{D}}{2}]$ the order of discriminant $D$. Suppose $D \equiv r^2 \pmod{4N}$ and let $\mu = \begin{pmatrix} \frac{r}{2N} & 0 \\ 0 & -\frac{r}{2N} \end{pmatrix}$. Then $\mathfrak{n} = [N, \frac{r + \sqrt{D}}{2}]$ is an ideal of $\mathcal{O}_D$ with norm $N$.

Following \cite[Section 7]{BY}, define the moduli stack $\mathcal{Z}(m, \mu)$ over $\mathbb{Z}$ classifying pairs $(x, \iota)$, where:
\begin{enumerate}
    \item $x = (\pi : E \rightarrow E') \in \mathcal{Y}_0(N)$,
    \item $\iota : \mathcal{O}_D \hookrightarrow \operatorname{End}(x) = \{ \alpha \in \operatorname{End}(E) : \pi \alpha \pi^{-1} \in \operatorname{End}(E') \}$ is a CM action satisfying $\iota(\mathfrak{n}) \ker \pi = 0$.
\end{enumerate}
This stack is a Deligne–Mumford stack over $\mathbb{Z}$. The forgetful map
\[
\mathcal{Z}(m, \mu) \rightarrow \mathcal{X}_0(N), \quad (\pi : E \rightarrow E', \iota) \mapsto (\pi : E \rightarrow E')
\]
is finite and étale.

\begin{lemma}[\cite{BEY}, Lemma 6.10]
Let $c$ be the conductor of $\mathcal{O}_D$ and $N' = (N, c)$. Let $\bar{Z}(m, \mu)$ be the Zariski closure of $Z(m, \mu)$ in $\mathcal{X}_0(N)$. Then there is an isomorphism
\[
\mathcal{Z}(m, \mu) \cong \bar{Z}(m, \mu)
\]
of stacks over $\mathbb{Z}[\frac{1}{N'}]$.
\end{lemma}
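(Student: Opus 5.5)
The statement is \cite[Lemma 6.10]{BEY}. I would prove it by comparing the two stacks through their moduli description. The plan is to construct a morphism $\mathcal{Z}(m,\mu)\to\bar{Z}(m,\mu)$ and show it is an isomorphism away from primes dividing $N'$.

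First, the forgetful map $\mathcal{Z}(m,\mu)\to\mathcal{X}_0(N)$ is finite and étale, as recalled above, so its image is a closed substack. On the generic fiber I would identify this image with $Z(m,\mu)$ by the classical dictionary. A point $w\in L_\mu[m]$ gives the CM point $z(w)$. The elliptic curve $\C/(\Z+\Z z(w))$ then carries an $\OO_D$-action, and the isogeny $\pi$ to $\C/(\Z+\Z Nz(w))$ has kernel killed by $\mathfrak{n}=[N,\frac{r+\sqrt D}{2}]$. The converse is standard Heegner-point theory in the sense of Gross--Kohnen--Zagier. Counted with the stacky multiplicities on both sides, this gives $\mathcal{Z}(m,\mu)_\Q\cong Z(m,\mu)$.

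Next, I would show that $\mathcal{Z}(m,\mu)$ is flat over $\Z[1/N']$, so that it equals the Zariski closure of its generic fiber. Since the map to $\mathcal{X}_0(N)$ is étale and $\mathcal{X}_0(N)$ is regular and flat, the stack $\mathcal{Z}(m,\mu)$ is regular. Flatness then reduces to checking that no irreducible component lies entirely in characteristic $p$. At a geometric point of characteristic $p$, Serre--Tate and Gross--Hopkins deformation theory says that an $\OO_D$-action on $E$ with $\iota(\mathfrak n)\ker\pi=0$ lifts to characteristic zero. This is the canonical lift for ordinary $E$, and it follows from the Lubin--Tate description of quasi-canonical lifts in the supersingular case. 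So every point of the special fiber lies in the closure of the generic fiber. Together with finiteness, this gives a closed immersion with image $\bar Z(m,\mu)$.

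The main obstacle is the behaviour at primes $p\mid N$, and in particular at $p\mid(N,c)$. There the kernel condition $\iota(\mathfrak n)\ker\pi=0$ and the compatibility condition $\pi\alpha\pi^{-1}\in\End(E')$ can fail to force $\mathfrak n$-invariance for non-maximal orders. As a result, extra characteristic-$p$ components, or non-reduced structure, may appear. To handle this I would verify that for $p\nmid N'$ either $p\nmid N$, where everything is smooth, or $p\nmid c$, where $\OO_D$ is maximal at $p$. In the second case the $\mathfrak n$-condition identifies $\ker\pi[p^\infty]$ with the $\mathfrak n_p$-torsion, and this is determined by the CM action. So the deformation problem is that of a single CM curve, and it is unobstructed. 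Inverting $N'$ removes exactly the remaining bad cases, which yields the isomorphism over $\Z[\frac{1}{N'}]$.
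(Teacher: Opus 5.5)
The paper only quotes this lemma from \cite{BEY}, so there is no argument here to match. Your strategy is the natural one: identify the generic fibre with $Z(m,\mu)$, then prove flatness of $\mathcal{Z}(m,\mu)$ over $\Z[\frac{1}{N'}]$ by local deformation theory, using that for $p\nmid N'$ the level structure is forced by the CM action. However, two steps fail as written.

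\textbf{Regularity and flatness.} The forgetful map is finite and \emph{unramified}, not \'etale; the paper's ``finite and \'etale'' is a slip. Indeed, $\mathcal{Z}(m,\mu)$ is quasi-finite over $\Z$, so it has dimension at most one, whereas $\mathcal{X}_0(N)$ is two-dimensional, and \'etale maps preserve dimension.

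So you cannot deduce that $\mathcal{Z}(m,\mu)$ is regular, and it is not regular in cases the lemma covers. Suppose $p\nmid 2N$, $p$ splits in $k$, and $p^s\,\|\,c$ with $s\geq 1$. Then the Serre--Tate coordinate $q=1+t$ shows that the complete local ring at an ordinary point is $W[[t]]/((1+t)^{p^s}-1)$, with $W=W(\bar{\mathbb F}_p)$, and this ring is not a domain. Without regularity, knowing that every geometric point lifts to characteristic zero is only a topological statement. It does not exclude $p$-torsion supported at embedded points.

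The repair uses the observation in your last paragraph. For $p\nmid N'$, the point of $\mathcal{X}_0(N)$ is determined by $(E,\iota)$: trivially if $p\nmid N$, and via $\ker\pi[p^\infty]=E[\mathfrak n_p]$ if $p\nmid c$. Hence the complete local ring of $\mathcal{Z}(m,\mu)$ at a point in characteristic $p$ is the quotient of $\widehat{\OO}_{\mathrm{Def}(E)}\cong W[[t]]$ over which a generator of $\OO_D$ lifts. The obstruction to lifting an endomorphism of an elliptic curve along a square-zero thickening lies in a free module of rank one (Grothendieck--Messing; alternatively use Gross's quasi-canonical liftings directly). So this quotient is $W[[t]]/(f)$ for a single $f$.

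Quasi-finiteness forces $p\nmid f$, and Weierstrass preparation then makes $W[[t]]/(f)$ finite free over $W$. This is the flatness you need, and it makes your lifting argument superfluous. It is also where $p\nmid N'$ really enters: when $p\mid(N,c)$, the action on $E'$ must be lifted separately, and the one-equation description is no longer available.

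\textbf{Closed immersion.} Finiteness plus flatness does not give a closed immersion. You would need the forgetful map to be a monomorphism, and in general it is not.

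If $2\mu=0$, then $\bar{\mathfrak n}=\mathfrak n$, so $(x,\iota)$ and $(x,\bar\iota)$ are non-isomorphic points over the same $x$. This is the multiplicity two coming from $\pm w$ in $Z(m,\mu)$.

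At a supersingular point in characteristic $p$, several CM points of $Z(m,\mu)$ may specialize to the same $x$ through non-conjugate embeddings $\OO_D\hookrightarrow\End(x)$. Then $\mathcal{Z}(m,\mu)$ has several disjoint local branches there, while its scheme-theoretic image in $\mathcal{X}_0(N)$ is their union glued at $x$.

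What the repaired argument proves, and what is used for the local intersection numbers $\langle\bar x_1\cdot\bar x_0\rangle_p$, is the following. Over $\Z[\frac{1}{N'}]$, the stack $\mathcal{Z}(m,\mu)$ is the flat closure of its generic fibre (which is $Z(m,\mu)$ with multiplicities), and its push-forward to $\mathcal{X}_0(N)$ is the divisor $\bar Z(m,\mu)$. You should state the conclusion in this form rather than as a closed immersion.
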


Now let $D_0$ be a fundamental discriminant and $k = \mathbb{Q}(\sqrt{D_0})$ an imaginary quadratic field. Define the moduli stack $\mathcal{C}$ over $\mathbb{Z}$ such that for any scheme $S$, the set $\mathcal{C}(S)$ consists of pairs $(\mathcal{E}, \iota)$, where:
\begin{itemize}
    \item $\mathcal{E}$ is a CM elliptic curve over $S$,
    \item $\iota : \mathcal{O}_k \hookrightarrow \mathcal{O}_{\mathcal{E}} := \operatorname{End}_S(\mathcal{E})$ is an $\mathcal{O}_k$-action such that the main involution on $\mathcal{O}_{\mathcal{E}}$ induces complex conjugation on $k$.
\end{itemize}

There is a natural isomorphism of stacks \cite[Lemma 7.10]{BY}:
\[
j : \mathcal{C} \rightarrow \mathcal{Z}(m_0, \mu_0), \quad j((\mathcal{E}, \iota)) = (\pi : \mathcal{E} \rightarrow \mathcal{E}_{\mathfrak{n}_0}, \iota).
\]

\subsection{Height Pairing}

Let $x$ be a CM point in $X_0(N)$ and $\bar{x}$ its Zariski closure in $\mathcal{X}_0(N)$. Since the class of a Heegner cycle in $H^{2\kappa}(\mathcal{Y}(\mathbb{C}), \mathbb{C})$ is zero, there exists a Green current $g_\kappa(x)$ on $\mathcal{Y}(\mathbb{C})$ satisfying
\[
\frac{\partial \bar{\partial}}{\pi i} g_\kappa(x) = \delta_{S_\kappa(x)}.
\]

Following Gillet–Soulé \cite{GS}, define the codimension-$\kappa$ arithmetic cycle on $\mathcal{Y}$ by
\[
\hat{S}_\kappa(x) = (S_\kappa(\bar{x}), g_\kappa(x)).
\]

The intersection number is defined as
\[
\langle S_\kappa(x), S_\kappa(y) \rangle = (-1)^\kappa \langle \hat{S}_\kappa(x), \hat{S}_\kappa(y) \rangle_{\text{GS}}.
\]

By Zhang's work \cite{Zhang}, this global pairing decomposes into local heights:
\begin{equation}\label{definter}
\langle S_\kappa(x), S_\kappa(y) \rangle = \sum_{p \leq \infty} \langle S_\kappa(x), S_\kappa(y) \rangle_p.
\end{equation}

\section{Local intersections of Heegner cycles}\label{Sec4}

\subsection{Finite Intersection Numbers}

Let $\mathcal{Z}_\kappa(U) = \mathcal{Z}_\kappa(m_0, \mu_0)$ with fundamental discriminant $D_0 = -4Nm_0$, and recall that $\mathcal{Z}(U) \simeq \mathcal{C}$.

Consider the fiber product:
\[
\xymatrix{
j^*\mathcal{Z}(m_1, \mu_1) \ar[r]^{\pi_1} \ar[d]_{\pi_2} & \mathcal{C} \ar[d] \\
\mathcal{Z}(m_1, \mu_1) \ar[r] & \mathcal{X}_0(N)
}
\]
That is,
\[
j^*\mathcal{Z}(m_1, \mu_1) = \mathcal{Z}(m_1, \mu_1) \times_{\mathcal{X}_0(N)} \mathcal{C}.
\]

\begin{lemma}[\cite{BY}, Lemma 7.10]
There is an isomorphism:
\begin{equation}\label{equfirst}
j^*\mathcal{Z}(m_1, \mu_1)(\bar{\mathbb{F}}_p)
\cong \bigsqcup_{\substack{n \equiv r_0 r_1 \pmod{2N} \\ n^2 \leq D_0 D_1}}
\mathcal{Z}\left( \frac{D_0 D_1 - n^2}{4N|D_0|}, \mathbf{n}_0, \frac{n + r_1 \sqrt{D_0}}{2\sqrt{D_0}} \right)(\bar{\mathbb{F}}_p),
\end{equation}
where $\mathcal{Z}(m, \mathbf{a}, \mu)$ is an algebraic stack in $\mathcal{C}$; see \cite{KYPull, BY} for details.
\end{lemma}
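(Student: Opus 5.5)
This lemma is the moduli-theoretic decomposition of the pullback of one Heegner stack to the CM stack $\mathcal{C}$. I would prove it by comparing $\bar{\mathbb{F}}_p$-points on the two sides directly, in the spirit of Kudla--Yang \cite{KYPull}.

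First I unwind the fiber product. Take a geometric point of $j^*\mathcal{Z}(m_1,\mu_1)$ over $\bar{\mathbb{F}}_p$. By the isomorphism $j:\mathcal{C}\cong\mathcal{Z}(m_0,\mu_0)$ it is a triple $(E,\iota_0,\iota_1)$, where:
\begin{itemize}
\item $(E,\iota_0)\in\mathcal{C}(\bar{\mathbb{F}}_p)$;
\item $x=(\pi:E\to E_{\mathfrak{n}_0})$ is the associated point of $\mathcal{X}_0(N)$;
\item $\iota_1:\mathcal{O}_{D_1}\hookrightarrow \End(x)$ satisfies $\iota_1(\mathfrak{n}_1)\ker\pi=0$.
\end{itemize}
I then encode $\iota_1$ by the traceless element $\delta=\iota_1(\sqrt{D_1})$, or more precisely by the element of $\End(x)$ of trace $r_1$ corresponding to $\frac{r_1+\sqrt{D_1}}{2}$.

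Next I decompose $\delta$ with respect to the $\mathcal{O}_k$-action. Inside $\End(x)\otimes\Q$, which is either $k$ or a definite quaternion algebra containing $k$ via $\iota_0$, I write
\[
\delta=\frac{n}{\sqrt{D_0}}\,\iota_0(\sqrt{D_0})+\delta^\perp ,
\]
where $\delta^\perp$ anticommutes with $\iota_0(k)$. The integer $n$ is $\frac{1}{2}\tr(\iota_0(\sqrt{D_0})\bar\delta)$ up to sign. Integrality of $\iota_0,\iota_1$ together with the compatibility with $\mathfrak{n}_0,\mathfrak{n}_1$ on $\ker\pi$ should force the congruence $n\equiv r_0r_1 \pmod{2N}$. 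Taking reduced norms gives
\[
N(\delta^\perp)=\frac{D_0D_1-n^2}{4N|D_0|}\cdot(\text{normalization}),
\]
and definiteness gives $n^2\le D_0D_1$. Equality holds exactly in the generic-fiber case $\delta^\perp=0$, which I interpret as the case where the two CM points coincide.

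The element $\delta^\perp$ is a special endomorphism of the $\mathcal{O}_k$-elliptic curve, lying in the $\mathfrak{n}_0$-twisted coset indexed by $\frac{n+r_1\sqrt{D_0}}{2\sqrt{D_0}}$. This is exactly the moduli datum of $\mathcal{Z}\bigl(\frac{D_0D_1-n^2}{4N|D_0|},\mathfrak{n}_0,\frac{n+r_1\sqrt{D_0}}{2\sqrt{D_0}}\bigr)$ as defined in \cite{KYPull,BY}. Conversely, given $n$ and such a $\delta^\perp$, I reassemble $\delta$. I then check that it is integral, that it lies in $\End(x)$, and that it satisfies $\iota_1(\mathfrak{n}_1)\ker\pi=0$. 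Together these give a bijection on points. It respects automorphisms, since both sides take automorphisms from $\mathcal{O}_k^\times$ acting on $E$, so it is an isomorphism of groupoids.

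The main obstacle is the integrality and level bookkeeping in the second step. One has to show that the conditions ``$\delta\in\End(x)$ with the kernel condition'' translate exactly into ``$n\equiv r_0r_1$ mod $2N$ and $\delta^\perp$ in the prescribed $\mathfrak{n}_0$-coset'', and that no extra points appear at primes $p\mid N$. I would handle this by working with the lattice $L\otimes\Z_p$ and using the identification of $\End(x)$ with the Eichler-type order described by $L^\sharp$. Since the lemma is quoted from \cite[Lemma 7.10]{BY}, I would finally cite that reference for this local verification.
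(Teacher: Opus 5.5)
Your outline is correct and agrees with the paper, which gives no argument of its own and simply quotes \cite[Lemma 7.10]{BY}. Your steps are exactly the Kudla--Yang/Bruinier--Yang argument behind that citation: split $\iota_1(\sqrt{D_1})$ into a $k$-component and a special endomorphism $\delta^\perp$ with $\delta^\perp\iota_0(a)=\iota_0(\bar a)\delta^\perp$, read off $n\equiv r_0r_1 \pmod{2N}$ and $n^2\le D_0D_1$ from integrality and definiteness, and defer the remaining level bookkeeping to Bruinier--Yang. One small slip: the $k$-component should be $\iota_0(n/\sqrt{D_0})=\frac{n}{D_0}\,\iota_0(\sqrt{D_0})$, as your trace formula for $n$ requires, not $\frac{n}{\sqrt{D_0}}\,\iota_0(\sqrt{D_0})$, which read literally is the scalar $n$; with this correction $\deg\delta^\perp=(D_0D_1-n^2)/|D_0|$, so your unspecified normalization is the factor $4N$.
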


Bruinier and Yang \cite[Section 6]{BY} showed that
\begin{equation}\label{equsec}
\widehat{\deg}(\mathcal{Z}(m, \mathbf{a}, \mu)) = -\frac{\deg(Z(U))}{2} \kappa(m, \mu).
\end{equation}

The $n$-th Legendre polynomial is defined by
\[
P_n(x) = \frac{1}{2^n n!} \frac{d^n}{dx^n}(x^2 - 1)^n.
\]

Let $x_i \in Z(m_i, \mu_i)$ and $\bar{x}_i$ its Zariski closure in $\mathcal{X}_0(N)$, for $i = 0, 1$. By \cite[Section 6]{BEY}:
\begin{eqnarray}\label{equthir}
\langle S_\kappa(x_1), S_\kappa(x_0) \rangle_p &=& (-1)^\kappa \langle S_\kappa(\bar{x}_1) \cdot S_\kappa(\bar{x}_0) \rangle_p \\
&=& (-1)^\kappa P_{\kappa-1}\left( \frac{n}{\sqrt{D_0 D_1}} \right) \langle \bar{x}_1 \cdot \bar{x}_0 \rangle_p, \nonumber
\end{eqnarray}
where over $p$, $\bar{x}_0$ and $\bar{x}_1$ intersect in $Z\left( \frac{D_0 D_1 - n^2}{4N|D_0|}, \mathbf{n}_0, \frac{n + r_1 \sqrt{D_0}}{2\sqrt{D_0}} \right)(\bar{\mathbb{F}}_p)$.

\begin{proposition}\label{profinite}
If $(D_0, 2N D_1) = 1$, then
\begin{eqnarray}
\langle \mathcal{Z}_\kappa(m_1, \mu_1), \mathcal{Z}_\kappa(U) \rangle_{\text{fin}} &=&
\frac{(-1)^{\kappa-1} (m_0 m_1)^{\frac{\kappa-1}{2}} \deg(Z(U))}{2}\nonumber \\
&\times& \sum_{\substack{n \equiv r_1 r_0 \pmod{2N} \\ n^2 \leq D_0 D_1}}
\kappa\left( \frac{D_0 D_1 - n^2}{4N|D_0|}, \frac{\tilde{2}n}{\sqrt{D_0}} \right)
P_{\kappa-1}\left( \frac{n}{\sqrt{D_0 D_1}} \right).
\end{eqnarray}
\end{proposition}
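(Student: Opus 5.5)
The plan is to expand both cycles into sums of CM cycles over points, work one prime at a time, and combine the three facts already recalled: the decomposition of $j^*\mathcal{Z}(m_1,\mu_1)$ in \eqref{equfirst}, the Legendre-polynomial formula \eqref{equthir}, and the arithmetic degree formula \eqref{equsec}.

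\textbf{Step 1 (expand by bilinearity).} By the definitions of $Z_\kappa(m_1,\mu_1)$ and $Z_\kappa(U)$,
\[
\langle \mathcal{Z}_\kappa(m_1,\mu_1),\mathcal{Z}_\kappa(U)\rangle_p=(m_0m_1)^{\frac{\kappa-1}{2}}\sum_{x_1\in Z(m_1,\mu_1)}\sum_{x_0\in Z(U)}\langle S_\kappa(x_1),S_\kappa(x_0)\rangle_p .
\]
This produces the prefactor $(m_0m_1)^{\frac{\kappa-1}{2}}$ directly.

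\textbf{Step 2 (local pairings at each point of intersection).} The hypothesis $(D_0,2ND_1)=1$ has two consequences. First, the discriminants are coprime, so $x_0\neq x_1$ and the generic fibres $\bar x_0,\bar x_1$ meet properly. Second, we stay in the range where the Bruinier–Yang moduli description applies: Lemma 6.10 of \cite{BEY}, and the isomorphism $j:\mathcal{C}\cong\mathcal{Z}(m_0,\mu_0)$.

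The points of intersection of $\bar x_1$ and $\bar x_0$ over $\bar{\mathbb F}_p$ are then the $\bar{\mathbb F}_p$-points of $j^*\mathcal{Z}(m_1,\mu_1)$. By \eqref{equfirst} these split according to an integer $n$ with $n\equiv r_0r_1\pmod{2N}$ and $n^2\le D_0D_1$. The $n$ records the relative position of the two CM actions, namely the trace pairing of the two CM endomorphisms.

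On the $n$-th piece, \eqref{equthir} gives two things:
\begin{itemize}
\item the local pairing of the Heegner cycles equals $(-1)^\kappa P_{\kappa-1}\bigl(n/\sqrt{D_0D_1}\bigr)$ times the local intersection multiplicity of $\bar x_1$ and $\bar x_0$;
\item this factor is constant on the whole component $\mathcal{Z}\bigl(\frac{D_0D_1-n^2}{4N|D_0|},\mathfrak n_0,\frac{n+r_1\sqrt{D_0}}{2\sqrt{D_0}}\bigr)$, since the angle between the two CM lines in the endomorphism ring is fixed by $n$.
\end{itemize}

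\textbf{Step 3 (sum over primes and evaluate).} Summing over all pairs $(x_1,x_0)$ and over $p<\infty$ turns the total intersection multiplicity on the $n$-th piece into the arithmetic degree $\widehat{\deg}$ of that stack. By \eqref{equsec} this degree equals $-\frac{\deg Z(U)}{2}\,\kappa(m,\mu)$, with $m=\frac{D_0D_1-n^2}{4N|D_0|}$ and $\mu$ the stated coset, written $\frac{\tilde 2 n}{\sqrt{D_0}}$ after identifying the lattice cosets.

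Combining the sign $(-1)^\kappa$ from Step 2 with the minus sign here gives $(-1)^{\kappa-1}$. Collecting the factors yields the stated formula.

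\textbf{Main obstacle.} The delicate part is the bookkeeping between stacks and points:
\begin{itemize}
\item the weights $w(x)/2$ and $1/\deg\pi$ in \eqref{defcycle} must match the stacky counting in $\widehat{\deg}$;
\item the term $n^2=D_0D_1$, the improper-intersection term, must contribute nothing; this holds because $\kappa(0,\mu)=0$ for $\mu\ne0$, and coprimality excludes $\mu=0$ there;
\item the identification of the coset label $\mu$ with $\tilde 2n/\sqrt{D_0}$ must be checked.
\end{itemize}
To handle these I would follow \cite[Section 6]{BEY} and \cite[Section 7]{BY}. Since the coefficient $P_{\kappa-1}$ depends only on $n$, it factors out of each component, so their argument extends verbatim from $\kappa$ odd to general $\kappa$.
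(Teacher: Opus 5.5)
Your argument is the paper's proof: the paper cites \cite[Proposition 6.9]{BEY} for odd $\kappa$ and, for even $\kappa$, just combines the decomposition \eqref{equfirst}, the Legendre-polynomial factor \eqref{equthir} and the degree formula \eqref{equsec} exactly as you do, with the prefactor $(m_0m_1)^{\frac{\kappa-1}{2}}$ coming from the definitions and the sign $(-1)^\kappa\cdot(-1)=(-1)^{\kappa-1}$. One small simplification: under $(D_0,2ND_1)=1$ the term $n^2=D_0D_1$ never occurs, so you do not need the $\kappa(0,\mu)$ argument there; indeed $D_0$ is an odd fundamental discriminant, so $|D_0|$ is squarefree and not a square, whereas $\gcd(D_0,D_1)=1$ together with $D_0D_1=n^2$ would force $|D_0|$ to be a square.
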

\begin{proof}
The case $\kappa$ odd is \cite[Proposition 6.9]{BEY}. For $\kappa$ even, combine equations (\ref{equfirst})--(\ref{equthir}).
\end{proof}

\subsection{Infinite Intersection and CM Values}

The generating function for $P_n(x)$ is
\[
G(x, t) = \sum_{n=0}^\infty P_n(x) t^n = \frac{1}{\sqrt{1 - 2xt + t^2}}.
\]

Define
\[
\beta_n(x) = \sum_{s=0}^{\lfloor n/2 \rfloor} \frac{n!}{2^{2s} (n - 2s)! (s!)^2} x^{n - 2s} (x^2 - 1)^s.
\]
For $n = 2j + 1$:
\[
\beta_{2j+1}(x) = \sum_{s=0}^j \binom{j + 1/2}{s} \binom{j}{s} x^{2j + 1 - 2s} (x^2 - 1)^s,
\]
and for $n = 2j$:
\[
\beta_{2j}(x) = \sum_{s=0}^j \binom{j - 1/2}{s} \binom{j}{s} x^{2j - 2s} (x^2 - 1)^s.
\]

Recall the hypergeometric function:
\[
{}_1F_0(\alpha; \ast; x) = \sum_{n=0}^\infty \frac{(\alpha)_n}{n!} x^n, \quad (\alpha)_n = \alpha(\alpha+1)\cdots(\alpha+n-1),
\]
which satisfies ${}_1F_0(\alpha; \ast; x) = (1 - x)^{-\alpha}$.

\begin{lemma}
For all $n$, $P_n(x) = \beta_n(x)$.
\end{lemma}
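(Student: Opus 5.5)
The plan is to compare generating functions. The generating function $G(x,t)=\sum_{n\ge 0}P_n(x)t^n=(1-2xt+t^2)^{-1/2}$ was recalled above, so it suffices to show
\[
\sum_{n\ge0}\beta_n(x)t^n=(1-2xt+t^2)^{-1/2}
\]
as formal power series in $t$, with coefficients in $\Q[x]$. Equality of the coefficients of $t^n$ then gives $P_n=\beta_n$ for every $n$. Working formally in $t$ avoids any convergence issues.

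\textbf{Step 1 (completing the square).} Write
\[
1-2xt+t^2=(1-xt)^2-t^2(x^2-1).
\]
Hence
\[
(1-2xt+t^2)^{-1/2}=(1-xt)^{-1}\Bigl(1-\tfrac{t^2(x^2-1)}{(1-xt)^2}\Bigr)^{-1/2}.
\]
Here $\tfrac{t^2(x^2-1)}{(1-xt)^2}$ has $t$-adic valuation at least $2$, so substituting it into a power series is legitimate.

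\textbf{Step 2 (expanding the square root).} Apply the identity ${}_1F_0(\tfrac12;\ast;y)=(1-y)^{-1/2}$ recalled above. Since $(\tfrac12)_s/s!=\binom{2s}{s}4^{-s}$, this gives
\[
(1-2xt+t^2)^{-1/2}=\sum_{s\ge0}\binom{2s}{s}4^{-s}(x^2-1)^s\,t^{2s}\,(1-xt)^{-2s-1}.
\]

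\textbf{Step 3 (expanding the remaining factor).} Apply ${}_1F_0$ once more, now with $\alpha=2s+1$:
\[
(1-xt)^{-2s-1}=\sum_{m\ge0}\binom{2s+m}{m}x^mt^m.
\]

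\textbf{Step 4 (reading off coefficients).} The coefficient of $t^n$ in the double sum comes from the pairs with $m=n-2s$ and $0\le s\le\lfloor n/2\rfloor$. This coefficient equals
\[
\sum_{s=0}^{\lfloor n/2\rfloor}\frac{(2s)!}{4^s(s!)^2}\cdot\frac{n!}{(2s)!\,(n-2s)!}\,x^{n-2s}(x^2-1)^s=\beta_n(x).
\]

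\textbf{Step 5 (the parity forms).} Finally I would check that the stated forms of $\beta_{2j+1}$ and $\beta_{2j}$ agree with the general definition. For $n=2j+1$ this amounts to the identity
\[
\frac{(2j+1)!}{4^s(2j+1-2s)!(s!)^2}=\binom{j+1/2}{s}\binom{j}{s}.
\]
To prove it, write $(2j+1)!/(2j+1-2s)!$ as a product of $2s$ consecutive integers. Split this product into its $s$ odd and $s$ even factors, giving $2^s\,(j+\tfrac12)(j-\tfrac12)\cdots$ and $2^s\,j(j-1)\cdots$ respectively. The case $n=2j$ is handled in exactly the same way.

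I do not expect a genuine obstacle here. The only point needing care is Step 4, specifically the bookkeeping that each power $t^n$ receives contributions from all $s\le\lfloor n/2\rfloor$ with $m=n-2s$.
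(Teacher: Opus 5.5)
Your proposal is correct and follows essentially the same route as the paper: rewrite $1-2xt+t^2=(1-xt)^2-t^2(x^2-1)$, expand the square root via ${}_1F_0(\tfrac12;\ast;\cdot)$, expand $(1-xt)^{-2s-1}$, and read off the coefficient of $t^n$ using $(\tfrac12)_s(2s+1)_{n-2s}/(s!\,(n-2s)!)=n!/(4^s(s!)^2(n-2s)!)$. Your Step 5, which checks by splitting into odd and even factors that the stated forms of $\beta_{2j+1}$ and $\beta_{2j}$ agree with the general formula, is a correct extra verification that the paper leaves implicit.
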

\begin{proof}
We compute:
\begin{eqnarray*}
G(x, t) &=& \left[ (1 - xt)^2 - t^2(x^2 - 1) \right]^{-1/2} \\
&=& (1 - xt)^{-1} \left[ 1 - \frac{t^2(x^2 - 1)}{(1 - xt)^2} \right]^{-1/2} \\
&=& (1 - xt)^{-1} \, {}_1F_0\left( \tfrac{1}{2}; \ast; \frac{t^2(x^2 - 1)}{(1 - xt)^2} \right) \\
&=& \sum_{s=0}^\infty \frac{(\frac{1}{2})_s}{s!} \frac{t^{2s}(x^2 - 1)^s}{(1 - xt)^{2s+1}} \\
&=& \sum_{n=0}^\infty \sum_{s=0}^\infty \frac{(\frac{1}{2})_s (2s+1)_n}{s! \, n!} x^n (x^2 - 1)^s t^{2s+n}.
\end{eqnarray*}
Extracting the coefficient of $t^n$:
\begin{eqnarray*}
P_n(x) &=& \sum_{s=0}^{\lfloor n/2 \rfloor} \frac{(\frac{1}{2})_s (2s+1)_{n-2s}}{s! \, (n-2s)!} x^{n-2s} (x^2 - 1)^s \\
&=& \sum_{s=0}^{\lfloor n/2 \rfloor} \frac{(\frac{1}{2})_s \, n!}{s! \, (n-2s)! \, (2s)!} x^{n-2s} (x^2 - 1)^s \\
&=& \sum_{s=0}^{\lfloor n/2 \rfloor} \frac{n!}{2^{2s} (s!)^2 (n-2s)!} x^{n-2s} (x^2 - 1)^s = \beta_n(x). \qedhere
\end{eqnarray*}
\end{proof}

\begin{remark}
The even case appears in \cite{BEY}; the above proof works for all $n$.
\end{remark}

Let $U \simeq k = \mathbb{Q}(\sqrt{D_0})$ with $D_0 = -4N m_0$ fundamental, $D_0 \equiv r_0^2 \pmod{4N}$. Decompose $V = U \oplus V_+$ and define lattices:
\[
\mathcal{N} = L \cap U, \quad \mathcal{P} = L \cap V_+.
\]
Then $\mathcal{N} \oplus \mathcal{P} \subset L$, and by \cite[Section 4]{BY}:
\[
\langle f, \Theta_L \rangle = \langle f_{\mathcal{P} \oplus \mathcal{N}}, \Theta_{\mathcal{P}} \otimes \Theta_{\mathcal{N}} \rangle.
\]
Assuming $L = \mathcal{N} \oplus \mathcal{P}$, we have the splitting as follows
\[
\Theta_L(\tau, z_U^\pm) = \Theta_{\mathcal{P}}(\tau) \otimes \Theta_{\mathcal{N}}(\tau, z_U^\pm).
\]

Define the Millson theta function:
\[
\tilde{\Theta}_{\mathcal{P}}(\tau) = \sum_{\delta \in \mathcal{P}^\sharp/\mathcal{P}} \sum_{\lambda \in \mathcal{P}_\delta}
p_{z_U^+}(\lambda) e(Q(\lambda)) e_\delta \in M_{3/2, \rho_{\mathcal{P}}}.
\]

For $f \in H_{3/2-\kappa, \bar{\rho}_L}$, let
\[
CT_f = \begin{cases}
CT\left( \langle f^+, [\tilde{\Theta}_{\mathcal{P}}, \mathcal{E}_{\mathcal{N}}]_j \rangle \right), & \kappa = 2j + 2, \\
CT\left( \langle f^+, [\Theta_{\mathcal{P}}, \mathcal{E}_{\mathcal{N}}]_j \rangle \right), & \kappa = 2j + 1.
\end{cases}
\]

\begin{definition}
For $f \in H_{3/2-\kappa, \overline{\rho}}$, define the CM value:
\begin{equation}\label{defcm}
\Phi^j(Z(U), f) = \frac{2}{w_k} \sum_{z \in \operatorname{supp}(Z(U))} \Phi^j(z, f),
\end{equation}
where $w_k = |\mathcal{O}_k^\times|$.
\end{definition}

For $g = \sum_{\mu} \sum_{m>0} b(m, \mu) q^m e_\mu \in S_{\frac{1}{2}+\kappa, \rho_L}$, the Shimura lift \cite{GKZ} is:
\[
Sh_{m_0, \mu_0}(g) = \sum_{n=1}^\infty \sum_{d \mid n} d^{\kappa-1} \left( \frac{D_0}{d} \right)
b\left( \frac{m_0 n^2}{d^2}, \frac{n}{d} \mu_0 \right) q^n \in S_{2\kappa}(\Gamma_0(N)).
\]

\begin{theorem}\cite{BEY, Du}\label{TheCM}
\begin{eqnarray*}
\Phi^j(Z(U), f) &=& \deg(Z(U)) \, CT_f \\
&& + (-1)^\kappa \frac{2^{4 - 2\kappa} \Gamma(\kappa - \frac{1}{2}) \sqrt{N}}{m_0^{\frac{\kappa-1}{2}} \pi^{\frac{1}{2} + \kappa}}
L'(Sh_{m_0, \mu_0}(g), \kappa).
\end{eqnarray*}
\end{theorem}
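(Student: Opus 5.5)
The statement to be proved is Theorem \ref{TheCM}, which evaluates the CM value $\Phi^j(Z(U),f)$. The plan is the Bruinier--Yang method: a splitting of the theta kernel at the CM points, followed by Stokes' theorem against the derivative of an incoherent Eisenstein series.

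\textbf{Step 1: splitting the kernel.} Assume first $L=\mathcal N\oplus\mathcal P$; the general case reduces to this through the map $f\mapsto f_{\mathcal P\oplus\mathcal N}$ recalled above.
\begin{itemize}
\item For $\kappa=2j+1$ the kernel factors as $\Theta_L(\tau,z_U^\pm)=\Theta_{\mathcal P}(\tau)\otimes\Theta_{\mathcal N}(\tau,z_U^\pm)$.
\item For $\kappa=2j+2$ the Millson kernel satisfies $\Theta^{\mathcal M}_L(\tau,z_U^\pm)=\pm\tilde\Theta_{\mathcal P}(\tau)\otimes\Theta_{\mathcal N}(\tau,z_U^\pm)$. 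This holds because $p_z(w)$ only sees the component of $w$ orthogonal to $z$, and that component lies in $\mathcal P$.
\end{itemize}
Next, sum over $\operatorname{supp} Z(U)$ with weight $2/w_k$. By the Siegel--Weil formula, the average of $\Theta_{\mathcal N}$ over the class group gives $\frac{\deg Z(U)}{2}E_{\mathcal N}(\tau,0;1)$.

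\textbf{Step 2: moving the raising operators.} Write $R^j f$ for the iterated raising operator, and let $\vartheta$ denote $\Theta_{\mathcal P}$ or $\tilde\Theta_{\mathcal P}$ according to the parity of $\kappa$. Move $R^j$ off $f$ by the self-adjointness identity for raising and lowering operators (regularized integration by parts). This converts
\[
\int^{\rm reg}\langle R^j f,\ \vartheta\otimes E_{\mathcal N}\rangle\,d\mu
\]
into a pairing of $f$ with a weight $2-(3/2-\kappa)$ non-holomorphic form. Up to an explicit constant, that form equals the holomorphic projection of $\vartheta\cdot R^j$-type derivatives of $E_{\mathcal N}$. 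Using $L_1E_{\mathcal N}(\tau,s;1)=\frac s2E_{\mathcal N}(\tau,s;-1)$ and differentiating at $s=0$, write
\[
E_{\mathcal N}'(\tau,0;1)=\mathcal E_{\mathcal N}+(\text{nonholomorphic part}),
\]
and recognize the holomorphic combination as the Rankin--Cohen bracket $[\vartheta,\mathcal E_{\mathcal N}]_j$.

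\textbf{Step 3: Stokes' theorem.} Apply Stokes' theorem on $\mathcal F_T$ and let $T\to\infty$.
\begin{itemize}
\item The boundary term gives the constant term $\mathrm{CT}\langle f^+,[\vartheta,\mathcal E_{\mathcal N}]_j\rangle$, that is $\deg(Z(U))\,CT_f$.
\item The interior term becomes the Petersson product of $\xi_{3/2-\kappa}f=g$ with $\vartheta\cdot E'_{\mathcal N}$-type data. Unfolding $E_{\mathcal N}$, this is a Rankin--Selberg integral whose value is the derivative at the centre of $L(g\otimes\vartheta,s)$.
\end{itemize}

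\textbf{Step 4: identifying the $L$-function.} Show that the Rankin--Selberg convolution of $g$ with the weight $1/2$ (or $3/2$) binary theta series of $\mathcal P$ equals, up to Gamma factors, $L(Sh_{m_0,\mu_0}(g),s)$. This follows from the Shimura/Gross--Kohnen--Zagier description of Fourier coefficients, in which $b(m_0n^2/d^2,\tfrac nd\mu_0)$ appears, as in \cite{BY, BEY}. Then track the constants $2^{4-2\kappa}\Gamma(\kappa-\tfrac12)\sqrt N\,m_0^{-(\kappa-1)/2}\pi^{-1/2-\kappa}$ and the sign $(-1)^\kappa$, which come from $(4\pi)^{-j}$, from $\Gamma$-factors of the unfolded integral, and from the Millson sign.

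\textbf{Main obstacle.} I expect the hardest part to be Step 2/3 for even $\kappa$ with the Millson kernel. There one must verify that the higher raising operators interact with $\tilde\Theta_{\mathcal P}$ exactly as with $\Theta_{\mathcal P}$, producing a bracket of weights $3/2$ and $1$. One must also check that all regularized boundary terms at the cusp vanish except the constant term. Finally, the normalization constants must be matched to those in the odd case of \cite{BEY}.
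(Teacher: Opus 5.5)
Your strategy is the Bruinier--Yang argument, which is the route taken in the sources the paper cites for this theorem (\cite{BEY} for $\kappa$ odd, \cite{Du} for $\kappa$ even). The paper itself gives no proof beyond these citations. Two points in Step 1 need correcting.

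First, $\Theta_{\mathcal N}(\tau,z_U^\pm)$ has weight $-1$, so the Siegel--Weil formula gives $\frac{2}{w_k}\sum_{z}\Theta_{\mathcal N}(\tau,z)=\frac{\deg Z(U)}{2}E_{\mathcal N}(\tau,0;-1)$. It does not give $E_{\mathcal N}(\tau,0;1)$, which is identically zero. The kernel you actually use is $E_{\mathcal N}(\tau,0;-1)=2L_1E'_{\mathcal N}(\tau,0;1)$.

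Second, for even $\kappa$, your sign $\Theta^{\mathcal M}_L(\tau,z_U^\pm)=\pm\tilde\Theta_{\mathcal P}\otimes\Theta_{\mathcal N}(\tau,z_U^\pm)$ makes the two orientations cancel when you sum naively over $\{z_U^\pm\}\times\mathrm{Cl}_k$, because $\Theta_{\mathcal N}$ does not see the orientation. You must specify the orientation sign with which the $z_U^-$ points enter the CM value, so that both orientations contribute $\tilde\Theta_{\mathcal P}$ computed at $z_U^+$, as in the definition of $CT_f$.

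The substantive gap is in Step 4. Unfolding $E_{\mathcal N}(\tau,s;1)$ against $g$ only sees the vectors $\lambda=nw\in L^\sharp\cap V_+$ of a rank-one lattice, so $\vartheta$ is a unary theta series. These vectors satisfy $Q(\lambda)=m_0n^2$ and $\lambda\in n\mu_0+L$. Up to Gamma factors and powers of $4\pi m_0$, you therefore get the partial series $D(s)=\sum_{n\ge 1}b(m_0n^2,n\mu_0)n^{-s}$ at a shifted argument. The definition of the Shimura lift gives
\[
L(Sh_{m_0,\mu_0}(g),s)=L(\chi_{D_0},s-\kappa+1)\,D(s).
\]
So the convolution is \emph{not} $L(Sh_{m_0,\mu_0}(g),s)$ up to Gamma factors. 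To reach the stated formula you need two further inputs:
\begin{enumerate}
\item[(i)] Central vanishing, $L(Sh_{m_0,\mu_0}(g),\kappa)=0$ for every $g$. This comes for free, because $E_{\mathcal N}(\tau,0;1)\equiv 0$ kills the Rankin--Selberg integral at $s=0$. It follows that the central derivative of the quotient is $L'(Sh_{m_0,\mu_0}(g),\kappa)/L(\chi_{D_0},1)$.
\item[(ii)] The class number formula $L(\chi_{D_0},1)=2\pi h_k/(w_k\sqrt{|D_0|})$. Combined with $\deg Z(U)=4h_k/w_k$, it gives $\deg Z(U)/L(\chi_{D_0},1)=2\sqrt{|D_0|}/\pi=4\sqrt{Nm_0}/\pi$.
\end{enumerate}

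Input (ii) is exactly why the $L'$-term in the theorem carries no factor $\deg Z(U)$ and has the factor $\sqrt N$ and that power of $m_0$. For $\kappa=1$ the $(4\pi m_0)^{-1/2}$ from unfolding cancels against $\sqrt{|D_0|}=2\sqrt{Nm_0}$, leaving $\frac{4\sqrt N}{\pi}=\frac{2^{2}\Gamma(1/2)\sqrt N}{\pi^{3/2}}$. Your list of where the constants come from ($(4\pi)^{-j}$, Gamma factors, the Millson sign) omits this step. Carried out as written, Step 4 would leave a class-number-dependent coefficient $\deg Z(U)\cdot(\dots)$ in front of $L'$ and no $\sqrt N$, which is not the stated formula.
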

\begin{remark}
Case $\kappa = 2j+1$ is in \cite{BEY}; case $\kappa = 2j+2$ is in \cite{Du}.
\end{remark}

Note that $\deg(Z(U)) = \frac{4}{\operatorname{vol}(\widehat{\mathcal{O}}_k^\times)} = \frac{4 h_k}{w_k}$.

Following \cite[Section 7]{BY}, define sublattices:
\begin{equation}\label{equn}
\mathcal{N} = \mathbb{Z} e_1 \oplus \mathbb{Z} e_2, \quad
\mathcal{P} = \mathbb{Z} \cdot \frac{2N}{t} w,
\end{equation}
where $e_1 = \begin{pmatrix} 1 & 0 \\ -r_0 & -1 \end{pmatrix}$, $e_2 = \begin{pmatrix} 0 & \frac{1}{N} \\ \frac{r_0^2 - D_0}{4N} & 0 \end{pmatrix}$, and $t = (r_0, 2N)$.

\begin{proposition}
For $f_{m_1, \mu_1} \in H_{3/2-\kappa, \bar{\rho}_L}$,
\begin{eqnarray*}
CT_{f_{m_1, \mu_1}} &=& (-1)^{\kappa-1} 2 m_1^{\frac{\kappa-1}{2}} \\
&\times& \sum_{\substack{n \equiv r_1 r_0 \pmod{2N} \\ n^2 \leq D_0 D_1}}
\kappa\left( \frac{D_0 D_1 - n^2}{4N|D_0|}, \frac{\tilde{2}n}{\sqrt{D_0}} \right)
P_{\kappa-1}\left( \frac{n}{\sqrt{D_0 D_1}} \right).
\end{eqnarray*}
\end{proposition}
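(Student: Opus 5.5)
The plan is to compute $CT_{f_{m_1,\mu_1}}$ directly. Here $f_{m_1,\mu_1}$ is the Hejhal--Poincar\'e series $F_{m_1,\mu_1}$, whose holomorphic part has principal part $q^{-m_1}e_{\mu_1}+q^{-m_1}e_{-\mu_1}$ plus a constant. We pull it back to the sublattice $\mathcal{N}\oplus\mathcal{P}$ of \eqref{equn} via $f\mapsto f_{\mathcal{P}\oplus\mathcal{N}}$ from \cite[Section 4]{BY}, and then read off the constant term of the pairing with the Rankin--Cohen bracket. Since $\mathcal{E}_{\mathcal{N}}$ has only nonnegative powers of $q$ and the theta series $\Theta_{\mathcal{P}}$ (resp.\ $\tilde\Theta_{\mathcal{P}}$) also has only nonnegative exponents, the constant term of $\langle f^+,[\cdot,\mathcal{E}_{\mathcal{N}}]_j\rangle$ receives contributions only from the principal part of $f^+$. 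The constant term $c^+(0,0)$ pairs with the $q^0$ coefficient of the bracket, and this coefficient vanishes for $j\ge1$ because every term contains a derivative; for $j=0$, $\kappa\in\{1,2\}$, it is handled as in \cite{BY}. So
\[
CT_f=\sum_{\pm}\ \text{coefficient of } q^{m_1}e_{\pm\mu_1}\text{ in }[\Theta_{\mathcal{P}},\mathcal{E}_{\mathcal{N}}]_j \ (\text{resp. } [\tilde\Theta_{\mathcal{P}},\mathcal{E}_{\mathcal{N}}]_j).
\]

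Next I would expand the bracket termwise. A vector $\lambda\in\mathcal{P}^\sharp$ contributes $q^{Q(\lambda)}$, and an index $(n',\nu)$ of $\mathcal{E}_{\mathcal{N}}$ contributes $\kappa(n',\nu)q^{n'}$. Each pair with $Q(\lambda)+n'=m_1$ and $\lambda+\nu\equiv\mu_1$ contributes
\[
\sum_{s}(-1)^s\binom{k+j-1}{s}\binom{1+j-1}{j-s}Q(\lambda)^{j-s}(n')^{s}\kappa(n',\nu),
\]
with $k=1/2$ or $3/2$, times $p_{z_U^+}(\lambda)$ in the even case. Using the explicit basis $e_1,e_2,w$, I would parametrize $\lambda$ by an integer $n\equiv r_0r_1\pmod{2N}$ with $Q(\lambda)=n^2/(4N|D_0|)$ and $n'=(D_0D_1-n^2)/(4N|D_0|)$, exactly as in Lemma 7.10 of \cite{BY} (equation \eqref{equfirst}). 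The condition $n'\ge0$ gives $n^2\le D_0D_1$, and the coset $\nu$ becomes $\tilde 2n/\sqrt{D_0}$. In the even case, $p_{z_U^+}(\lambda)$ is proportional to $n$, and $\pm\mu_1$ together with $\pm\lambda$ accounts for the factor $2$.

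The main step is the polynomial identity. Writing $x=n/\sqrt{D_0D_1}$, we have $Q(\lambda)=m_1x^2$ and $n'=m_1(1-x^2)$, so the bracket coefficient becomes $m_1^{j}$ (times $m_1^{1/2}x$ in the even case) times a polynomial in $x$. I would show that this polynomial equals $(-1)^{\kappa-1}m_1^{(\kappa-1)/2}P_{\kappa-1}(x)$ up to the normalization $1/(4\pi)^j$ and the raising-operator factor from \eqref{equphij}. To do this, rewrite $(n')^s=(-1)^s m_1^s(x^2-1)^s$ and match the coefficients with $\beta_{2j}$ (odd $\kappa$) or $\beta_{2j+1}$ (even $\kappa$) from the Lemma $P_n=\beta_n$. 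The binomials $\binom{j-1/2}{s}\binom{j}{s}$ and $\binom{j+1/2}{s}\binom{j}{s}$ in $\beta$ are designed to match $\binom{k+j-1}{s}\binom{j}{j-s}$ with $k=1/2,3/2$, so this reduces to bookkeeping of signs and powers. I expect the main obstacle to be tracking the normalization constants: the $j!$ from the raising-operator relation for Poincar\'e series, the $(4\pi)^{-j}$, and the $2\pi i$ normalization in $g^{(s)}$. Together these must produce precisely $(-1)^{\kappa-1}2m_1^{(\kappa-1)/2}$. I would first check the case $j=0$ against \cite{BY} to fix the constants.
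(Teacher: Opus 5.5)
Your strategy is the paper's: expand the bracket termwise, parametrize by $n\equiv r_0r_1\pmod{2N}$ with $n_1=n^2/(4N|D_0|)$ and $n_2=(D_0D_1-n^2)/(4N|D_0|)$, recognize $\beta_{2j}$ resp.\ $\beta_{2j+1}$, and use $P_n=\beta_n$. The paper writes out only $\kappa=2j+2$ and cites \cite{BEY} for odd $\kappa$; your direct treatment of the odd case is fine.

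However, the step that should produce the factor $2$ fails for even $\kappa$ as you have set it up. You take the principal part of $f^+_{m_1,\mu_1}$ to be $q^{-m_1}(e_{\mu_1}+e_{-\mu_1})$ for all $\kappa$. For $L$ of signature $(1,2)$ one has $\bar\rho_L(Z)e_\mu=-i\,e_{-\mu}$. Invariance under $Z$ in weight $3/2-\kappa$ then forces $f_{-\mu}=(-1)^{\kappa-1}f_\mu$, so for even $\kappa$ the principal part is $q^{-m_1}(e_{\mu_1}-e_{-\mu_1})$. On the bracket side, $p_{z_U^+}(\lambda)=-\sqrt{m_1}\,x$ is odd in $\lambda$ and $\kappa(\cdot,\nu)=\kappa(\cdot,-\nu)$. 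Hence the $(m_1,-\mu_1)$-coefficient of $[\tilde\Theta_{\mathcal{P}},\mathcal{E}_{\mathcal{N}}]_j$ is minus the $(m_1,\mu_1)$-coefficient; this is the paper's closing remark that $\beta_{2j+1}$ is odd. With your symmetric principal part the two contributions cancel and you get $CT_f=0$. The factor $2$ appears only when the antisymmetry of $f$ is paired with the oddness of the bracket coefficient. The sign $(-1)^{\kappa-1}=-1$ comes from $p_{z_U^+}(w)=-\sqrt{m_0}$. Your sentence that ``$\pm\mu_1$ together with $\pm\lambda$'' accounts for the factor $2$ hides exactly this point.

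Second, the normalization problem you single out as the main obstacle does not exist, and inserting those factors would spoil the stated constant. $CT_f$ is by definition the formal constant term of $\langle f^+,[\,\cdot\,,\mathcal{E}_{\mathcal{N}}]_j\rangle$. Neither the $(4\pi)^{-j}$ of \eqref{equphij} nor a $j!$ from raising operators enters it, and the $(2\pi i)^{-s}$ in $g^{(s)}$ simply sends $q^{n}$ to $n^{s}q^{n}$. With $n_1=m_1x^2$ and $(-1)^s n_2^s=m_1^s(x^2-1)^s$, the $(m_1,\mu_1)$-coefficient is exactly $m_1^{j}\sum_n\kappa(n_2,\nu)\beta_{2j}(x)$ for $\kappa=2j+1$. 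For $\kappa=2j+2$ it is exactly $-m_1^{j+1/2}\sum_n\kappa(n_2,\nu)\beta_{2j+1}(x)$. In any case, calibrating at $j=0$ could not detect a spurious power of $4\pi$.
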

\begin{proof}
The case $\kappa = 2j+1$ is in \cite[Section 6]{BEY}. Assume $\kappa = 2j+2$. Then
\[
CT_{f_{m_1, \mu_1}} = CT\left( \langle f_{m_1, \mu_1}^+, [\tilde{\Theta}_{\mathcal{P}}, \mathcal{E}_{\mathcal{N}}]_j \rangle \right).
\]
By definition of the Rankin–Cohen bracket:
\begin{eqnarray*}
[\tilde{\Theta}_{\mathcal{P}}, \mathcal{E}_{\mathcal{N}}]_j &=& \sum a(n_1, v_1) \kappa(n_2, v_2) \\
&\times& \sum_{s=0}^j (-1)^s \binom{j - 1/2}{s} \binom{j}{s} n_1^{j-s} n_2^s q^{n_1 + n_2} e_{v_1} e_{v_2}.
\end{eqnarray*}
As in \cite[Proposition 6.7]{BEY}, the $(m_1, \mu_1)$-th coefficient is:
\begin{eqnarray*}
&& \sum_{\substack{n_1 + n_2 = m_1 \\ v_1 + v_2 = \mu_1}} a(n_1, v_1) \kappa(n_2, v_2)
\sum_{s=0}^j (-1)^s \binom{j + 1/2}{s} \binom{j}{s} n_1^{j-s} n_2^s \\
&=& -\sum_{\substack{n \equiv r_1 r_0 \pmod{2N} \\ n^2 \leq D_0 D_1}} \frac{n \sqrt{m_0}}{D_0}
\kappa\left( \frac{D_0 D_1 - n^2}{4N|D_0|}, \frac{\tilde{2}n}{\sqrt{D_0}} \right) \left( \frac{1}{4N|D_0|} \right)^j \\
&& \times \sum_{s=0}^j \binom{j + 1/2}{s} \binom{j}{s} n^{2j - 2s} (n^2 - D_0 D_1)^s.
\end{eqnarray*}
Using $p_{z_U^+}(w) = -\sqrt{m_0}$, this equals:
\begin{eqnarray*}
&& -\sqrt{D_0 D_1}^{\, 2j+1} \sum_{\substack{n \equiv r_1 r_0 \pmod{2N} \\ n^2 \leq D_0 D_1}}
\frac{\sqrt{m_0}}{D_0} \kappa\left( \frac{D_0 D_1 - n^2}{4N|D_0|}, \frac{\tilde{2}n}{\sqrt{D_0}} \right)
\left( \frac{1}{4N|D_0|} \right)^j \\
&& \times \sum_{s=0}^j \binom{j + 1/2}{s} \binom{j}{s}
\left( \frac{n}{\sqrt{D_0 D_1}} \right)^{2j+1 - 2s}
\left( \left( \frac{n}{\sqrt{D_0 D_1}} \right)^2 - 1 \right)^s \\
&=& -m_1^{\frac{2j+1}{2}} \sum_{\substack{n \equiv r_1 r_0 \pmod{2N} \\ n^2 \leq D_0 D_1}}
\kappa\left( \frac{D_0 D_1 - n^2}{4N|D_0|}, \frac{\tilde{2}n}{\sqrt{D_0}} \right)
\beta_{2j+1}\left( \frac{n}{\sqrt{D_0 D_1}} \right).
\end{eqnarray*}
Since $\beta_{2j+1}(-x) = -\beta_{2j+1}(x)$, the $(m_1, -\mu_1)$-coefficient gives the opposite value. The result follows.
\end{proof}

\section{Main results}\label{Sec5}
\subsection{Height Pairings}

For any CM point $x$ in $X_0(N)$, the Heegner cycle $S_\kappa(x)$ was defined by Zhang \cite{Zhang}. He constructed an arithmetic cycle on $\mathcal{Y}$ as 
\[
\hat{S}_\kappa(x) = (S_\kappa(\bar{x}), g_\kappa(x)).
\]

The height pairing is defined by
\[
\langle S_\kappa(x), S_\kappa(y) \rangle = (-1)^\kappa \langle \hat{S}_\kappa(x) \cdot \hat{S}_\kappa(y) \rangle_{\text{GS}},
\]
and decomposes into local contributions:
\begin{equation}\label{definter}
\langle S_\kappa(x), S_\kappa(y) \rangle = \sum_{p \leq \infty} \langle S_\kappa(x), S_\kappa(y) \rangle_p.
\end{equation}
Moreover,
\begin{eqnarray}\label{equarichi}
\langle S_\kappa(x), S_\kappa(y) \rangle_\infty &=& \frac{1}{2} G_{N, \kappa}(x, y), \\
\langle S_\kappa(x), S_\kappa(y) \rangle_p &=& (-1)^\kappa (S_\kappa(\bar{x}) \cdot S_\kappa(\bar{y}))_p,
\end{eqnarray}
where $G_{N, \kappa}(x, y)$ is the higher Green function.

For the cycles $Z_\kappa(f)$ and $Z_\kappa(m_0, \mu_0)$, we have:
\begin{eqnarray}\label{equneronfirst}
\langle Z_\kappa(f), Z_\kappa(m_0, \mu_0) \rangle 
&=& (-1)^\kappa \langle \widehat{\mathcal{Z}}_\kappa(f), \widehat{\mathcal{Z}}_\kappa(m_0, \mu_0) \rangle_{\text{GS}} \\
&=& \langle Z_\kappa(f), Z_\kappa(U) \rangle_\infty + \langle Z_\kappa(f), Z_\kappa(U) \rangle_{\text{fin}}. \nonumber
\end{eqnarray}

We now compute both terms.

\begin{lemma}\label{lemgreen}
The Green currents for the Heegner cycles $Z_\kappa(m, \mu)$ and $Z_\kappa(f)$ are given by
\[
(-1)^\kappa \cdot \frac{1}{2} \Phi^j(z, F_{m, \mu}) \quad \text{and} \quad (-1)^\kappa \Phi^j(z, f),
\]
respectively.
\end{lemma}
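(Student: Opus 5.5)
The plan is to reduce Lemma \ref{lemgreen} to Proposition \ref{protwo} together with the normalization of the archimedean local height in (\ref{equarichi}). Zhang's construction gives, for each CM point $x$, a Green current $g_\kappa(x)$ for $S_\kappa(x)$. Evaluating the archimedean pairing against another cycle $S_\kappa(y)$ gives $\frac12 G_{N,\kappa}(x,y)$. So it is enough to produce a function on $X_0(N)$ whose pairing against $Z_\kappa(U)$ (or any CM cycle) reproduces $\frac12 G_{N,\kappa}$ summed over the support, with the weights $m^{\frac{\kappa-1}{2}}$ from the definition of $Z_\kappa(m,\mu)$ and the sign convention $(-1)^\kappa$ from the definition of the height pairing.

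The first step handles a single cycle $Z_\kappa(m,\mu)$. By linearity of Green currents in the cycle, a Green current for $Z_\kappa(m,\mu)=m^{\frac{\kappa-1}{2}}\sum_{x\in Z(m,\mu)}S_\kappa(x)$ is $m^{\frac{\kappa-1}{2}}\sum_x g_\kappa(x)$. Its archimedean contribution is then governed by $m^{\frac{\kappa-1}{2}}\cdot\frac12 G_{N,\kappa}(z,Z(m,\mu))$, where $G_{N,\kappa}(z,Z(m,\mu))=\sum_x G_{N,\kappa}(z,x)$. Proposition \ref{protwo} gives
\[
\Phi^j(z,F_{m,\mu})=(-1)^\kappa\, 2m^{\frac{\kappa-1}{2}}G_{N,\kappa}(z,Z(m,\mu)).
\]
Solving, $m^{\frac{\kappa-1}{2}}G_{N,\kappa}(z,Z(m,\mu))=(-1)^\kappa\frac12\Phi^j(z,F_{m,\mu})$. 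This identifies the Green current for $Z_\kappa(m,\mu)$, in the normalization where the pairing carries the factor $(-1)^\kappa$ and the archimedean height is $\frac12 G_{N,\kappa}$, with $(-1)^\kappa\frac12\Phi^j(z,F_{m,\mu})$. I would also check that the singularity type matches. The higher Green function has a logarithmic singularity along each point of $Z(m,\mu)$ of the correct multiplicity, by Bruinier--Ono (Proposition 5.2) applied to $R^j f$, or by the Millson analogue in the even case.

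The second step treats $Z_\kappa(f)$. A harmonic Maass form $f\in H_{3/2-\kappa,\bar\rho_L}$ is determined up to $S$-valued cusp form ambiguity by its principal part. Using the Hejhal--Poincaré series, one writes
\[
f=\frac12\sum_{m>0,\mu}c^+(-m,\mu)F_{m,\mu}+h,
\]
where the factor $\frac12$ accounts for the symmetry $F_{m,\mu}=F_{m,-\mu}$ in the principal part $e_\mu+e_{-\mu}$, and $h$ has vanishing principal part. The form $h$ is then either zero or a cusp form of weight $3/2-\kappa<1/2$; such forms vanish, so $h=0$, possibly up to a constant term handled as in \cite{BEY}. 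Since $\Phi^j$ is linear in $f$, this gives
\[
\Phi^j(z,f)=\frac12\sum c^+(-m,\mu)\Phi^j(z,F_{m,\mu}).
\]
Combining with the first step, $(-1)^\kappa\Phi^j(z,f)$ equals the sum of $c^+(-m,\mu)$ times Green currents for $Z_\kappa(m,\mu)$. The two symmetric $\pm\mu$ terms together account for the factor $2$, which turns $\frac12$ into $1$, since $Z(m,\mu)=Z(m,-\mu)$.

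The main obstacle is this bookkeeping between $\mu$ and $-\mu$ and the factor $\frac12$ versus $1$, together with the signs. I would verify it by comparing with the case $\kappa=1$, where $\Phi(z,f)$ is the Bruinier--Ono Green function for $-2Z(f)$. A secondary point is that $\Phi^j(z,f)$ is genuinely a Green current on $\mathcal{Y}$ rather than only on $X_0(N)$. For this I would follow Zhang's construction, pulling back via the projector defining $S_\kappa$ so that only the fiberwise Legendre-polynomial kernel enters. This is exactly how $G_{N,\kappa}$ arises, so the identification holds at the level of pairings with CM cycles, which is all that (\ref{equneronfirst}) requires.
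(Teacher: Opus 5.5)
Your proposal is correct and follows the paper's route: your first step is exactly the paper's inversion of Proposition~\ref{protwo}, and your decomposition $f=\frac12\sum_{m,\mu}c^+(-m,\mu)F_{m,\mu}$ spells out the ``linearity'' the paper invokes without proof; the remainder $h$ vanishes for $\kappa\ge 2$ once the pairing $\|\xi(h)\|^2=\{\xi(h),h\}=0$ forces $h$ to be holomorphic. One correction to your bookkeeping: for $\kappa$ even, weight-$(3/2-\kappa)$ forms for $\bar\rho_L$ are antisymmetric, so $F_{m,-\mu}=-F_{m,\mu}$ and $c^+(-m,-\mu)=-c^+(-m,\mu)$, matching the sign change of the oriented cycle, not $F_{m,-\mu}=F_{m,\mu}$; this leaves the $\frac12$ in your decomposition unchanged, and that $\frac12$ cancels directly against the $\frac12$ from your first step, so no extra $\pm\mu$ doubling is needed.
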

\begin{proof}
By equation (\ref{equarichi}), the Green current of a Heegner cycle is given by the higher Green function. 

For $Z_\kappa(m, \mu)$, the multiplicity $m^{\frac{\kappa - 1}{2}}$ implies that the current equals
\[
m^{\frac{\kappa - 1}{2}} G_{N, \kappa}(x, Z(m, \mu)).
\]
From Proposition \ref{protwo}, we have
\[
m^{\frac{\kappa - 1}{2}} G_{N, \kappa}(x, Z(m, \mu)) = (-1)^\kappa \cdot \frac{1}{2} \Phi^j(z, F_{m, \mu}).
\]
Since $Z_\kappa(f) = \sum_{m > 0} c^+(-m, \mu) Z_\kappa(m, \mu)$, linearity gives the Green current of $Z_\kappa(f)$ as $(-1)^\kappa \Phi^j(z, f)$.
\end{proof}

\begin{lemma}\label{leminfinite}
The archimedean contribution is given by
\begin{eqnarray*}
\langle Z_\kappa(f), Z_\kappa(U) \rangle_\infty 
&=& (-1)^\kappa \frac{m_0^{\frac{\kappa - 1}{2}} \deg(Z(U))}{2} CT_f \\
&& + \frac{2 \Gamma(\kappa - \frac{1}{2}) \sqrt{N} \, L'(Sh_{m_0, \mu_0}(\xi_{3/2 - \kappa} f), \kappa)}{(4\pi)^{\kappa - 1} \pi^{\frac{3}{2}}}.
\end{eqnarray*}
\end{lemma}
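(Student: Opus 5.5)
The plan is to express the archimedean local height as the higher Green function of $Z_\kappa(f)$ evaluated on $Z(U)$, and then to apply the CM value formula of Theorem \ref{TheCM}.

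By (\ref{equarichi}) and linearity,
\[
\langle Z_\kappa(f), Z_\kappa(U)\rangle_\infty
= \frac{1}{2}\, m_0^{\frac{\kappa-1}{2}} \sum_{m>0,\mu} c^+(-m,\mu)\, m^{\frac{\kappa-1}{2}} \sum_{y\in Z(U)}\sum_{x\in Z(m,\mu)} G_{N,\kappa}(x,y).
\]
Lemma \ref{lemgreen}, resting on Proposition \ref{protwo}, identifies the inner Green function attached to $Z_\kappa(f)$ with a multiple of the theta lift $\Phi^j(y,f)$. Summing over $y\in Z(U)$ with the stacky weights $2/w_k$ then turns the right-hand side into a multiple of the CM value $\Phi^j(Z(U),f)$ from (\ref{defcm}). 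The first step is to pin down this constant. Combining Proposition \ref{protwo} with the factor $\tfrac12$ in (\ref{equarichi}) should give
\[
\langle Z_\kappa(f), Z_\kappa(U)\rangle_\infty = \frac{(-1)^\kappa}{4}\, m_0^{\frac{\kappa-1}{2}}\, \Phi^j(Z(U),f).
\]
Here the sign $(-1)^\kappa$ and the factor $2$ from Proposition \ref{protwo} must be tracked carefully.

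The second step is to insert Theorem \ref{TheCM}, with $g=\xi_{3/2-\kappa}f$. The term $\deg(Z(U))\,CT_f$ becomes the first summand, $(-1)^\kappa m_0^{\frac{\kappa-1}{2}}\deg(Z(U))\,CT_f$ times the overall constant. The $L'$-term picks up $m_0^{\frac{\kappa-1}{2}}$, which cancels the $m_0^{-\frac{\kappa-1}{2}}$ in Theorem \ref{TheCM}, and $(-1)^\kappa\cdot(-1)^\kappa=1$. The remaining constant is then checked against the target:
\[
\frac{2^{4-2\kappa}}{4\pi^{\frac12+\kappa}} = \frac{2^{2-2\kappa}}{\pi^{\kappa+\frac12}} = \frac{4}{(4\pi)^{\kappa-1}}\cdot\frac{1}{4\pi^{3/2}}.
\]
This matches the stated coefficient $\frac{2}{(4\pi)^{\kappa-1}\pi^{3/2}}$ once the normalization is fixed correctly. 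In particular, the first term's stated factor $\tfrac{1}{2}$, as opposed to $\tfrac14$, has to come out consistently with Theorem \ref{TheCM}.

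The main obstacle is bookkeeping the normalizations. Three need to be reconciled:
\begin{itemize}
\item the factor $\tfrac12$ in $\langle\cdot,\cdot\rangle_\infty=\tfrac12 G_{N,\kappa}$;
\item the factor $2m^{\frac{\kappa-1}{2}}$ in Proposition \ref{protwo};
\item the weight $2/w_k$ in the CM value, set against the multiplicity with which $Z(U)$ counts points of the stack $\mathcal{C}$.
\end{itemize}
I would do this by testing the identity on a single Poincaré series $f=F_{m_1,\mu_1}$, where both sides are explicit, and then extending by linearity in the principal part of $f$. At this stage I would also confirm that $\Phi^j(y,f)$ is regular at the points of $Z(U)$. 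This uses the hypothesis $(D_0,2ND(f))=1$, which makes $Z(U)$ disjoint from $Z(f)$, so that the local archimedean height is genuinely the Green function evaluated on $Z(U)$ and needs no self-intersection correction.
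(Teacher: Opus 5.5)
There is a genuine gap, and it is exactly the constant you leave open. Your first step asserts $\langle Z_\kappa(f),Z_\kappa(U)\rangle_\infty=\frac{(-1)^\kappa}{4}m_0^{\frac{\kappa-1}{2}}\Phi^j(Z(U),f)$. Inserting this into Theorem~\ref{TheCM} gives $(-1)^\kappa\frac{m_0^{\frac{\kappa-1}{2}}\deg(Z(U))}{4}CT_f$ plus $\frac{\sqrt N\,\Gamma(\kappa-\frac12)}{(4\pi)^{\kappa-1}\pi^{3/2}}L'(\cdots)$. Your own identity $\frac{2^{4-2\kappa}}{4\pi^{\kappa+1/2}}=\frac{1}{(4\pi)^{\kappa-1}\pi^{3/2}}$ shows this. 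The result is exactly half of \emph{both} stated coefficients. Both terms are scaled by the same constant, so nothing inside Theorem~\ref{TheCM} can be renormalized to repair this. Saying it ``matches once the normalization is fixed correctly'' is therefore not an argument. The Poincar\'e-series test you propose is where the discrepancy would be resolved, but you do not carry it out.

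The missing point is how the factor $2$ in Proposition~\ref{protwo} disappears when you pass from $F_{m,\mu}$ to a general $f$. The Hejhal--Poincar\'e series $F_{m,\mu}$ has principal part $q^{-m}e_\mu$ plus its image $\pm q^{-m}e_{-\mu}$, which comes from the coset of $S^2$ in $\widetilde{\Gamma_\infty}\backslash\Gamma'$. Since $f$ lies in the same space $H_{3/2-\kappa,\bar\rho_L}$, it has the same symmetry under $\mu\mapsto-\mu$. Hence $\sum_{m>0,\mu}c^+(-m,\mu)F_{m,\mu}$ has twice the principal part of $f$, and summing Proposition~\ref{protwo} against the coefficients of $Z_\kappa(f)$ gives $\frac{(-1)^\kappa}{2}\Phi^j(z,2f)=(-1)^\kappa\Phi^j(z,f)$. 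This is the normalization stated in Lemma~\ref{lemgreen}, which you cite but do not actually use: the Green current of $Z_\kappa(f)$ is $(-1)^\kappa\Phi^j(z,f)$, not $\frac{(-1)^\kappa}{2}\Phi^j(z,f)$. With it, the factor $\frac12$ from (\ref{equarichi}) and the multiplicity $m_0^{\frac{\kappa-1}{2}}$ give $\langle Z_\kappa(f),Z_\kappa(U)\rangle_\infty=\frac{(-1)^\kappa}{2}m_0^{\frac{\kappa-1}{2}}\Phi^j(Z(U),f)$. Theorem~\ref{TheCM} then yields $(-1)^\kappa\frac{m_0^{\frac{\kappa-1}{2}}\deg(Z(U))}{2}CT_f$ and $\frac{2^{4-2\kappa}}{2\pi^{\kappa+1/2}}=\frac{2}{(4\pi)^{\kappa-1}\pi^{3/2}}$, exactly as stated. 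This is the paper's two-line proof. The rest of your outline (linearity, the weights $2/w_k$, the sign bookkeeping, regularity of $\Phi^j$ on $Z(U)$) is fine.
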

\begin{proof}
By Lemma \ref{lemgreen}, we have
\[
\langle Z_\kappa(f), Z_\kappa(U) \rangle_\infty = (-1)^\kappa \cdot \frac{m_0^{\frac{\kappa - 1}{2}}}{2} \Phi^j(Z(U), f).
\]
According to Theorem \ref{TheCM}, we have
\begin{eqnarray*}
\Phi^j(Z(U), f) &=& \deg(Z(U)) CT_f \\
&& + (-1)^\kappa \frac{2^{4 - 2\kappa} \Gamma(\kappa - \frac{1}{2}) \sqrt{N}}{m_0^{\frac{\kappa - 1}{2}} \pi^{\frac{1}{2} + \kappa}} L'(Sh_{m_0, \mu_0}(g), \kappa).
\end{eqnarray*}
Substituting and simplifying yields the result.
\end{proof}
\subsection{Global height}
\begin{theorem}\label{mainthe}
For any $f \in H_{3/2 - \kappa, \bar{\rho}_L}$ and Heegner cycle $Z_\kappa(U)$, the global height pairing is:
\begin{equation}
\langle Z_\kappa(f), Z_\kappa(U) \rangle = \frac{2 \sqrt{N} \, \Gamma(\kappa - \frac{1}{2})}{(4\pi)^{\kappa - 1} \pi^{\frac{3}{2}}} L'(Sh_{m_0, \mu_0}(\xi_{3/2 - \kappa} f), \kappa).
\end{equation}
\end{theorem}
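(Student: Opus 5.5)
The plan is to use the decomposition (\ref{equneronfirst}) of the global height into an archimedean and a finite part, and to show that the constant-term contributions $CT_f$ from the two parts cancel. This leaves only the $L'$-term from Lemma \ref{leminfinite}. Throughout I work under the hypothesis $(D_0,2ND(f))=1$ of Theorem \ref{maintheo}, so that Proposition \ref{profinite} applies to every $Z_\kappa(m_1,\mu_1)$ with $c^+(-m_1,\mu_1)\neq 0$.

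\textbf{Step 1: the finite part.} Since $f^+$ has only finitely many terms in its principal part, I write
\[
Z_\kappa(f)=\sum_{m_1>0,\mu_1} c^+(-m_1,\mu_1)Z_\kappa(m_1,\mu_1)
\]
as a finite sum. Applying Proposition \ref{profinite} termwise expresses $\langle Z_\kappa(f),Z_\kappa(U)\rangle_{\rm fin}$ as
\[
\frac{(-1)^{\kappa-1}m_0^{\frac{\kappa-1}{2}}\deg(Z(U))}{2}\sum_{m_1,\mu_1}c^+(-m_1,\mu_1)\, m_1^{\frac{\kappa-1}{2}}\, S(m_1,\mu_1),
\]
where $S(m_1,\mu_1)$ denotes the sum over $n$ of $\kappa(\cdot,\cdot)P_{\kappa-1}(\cdot)$.

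\textbf{Step 2: matching with $CT_f$.} The constant term $CT_f$ is linear in $f^+$, and only the principal part of $f^+$ pairs nontrivially with the holomorphic bracket $[\Theta_{\mathcal P},\mathcal E_{\mathcal N}]_j$ (or its Millson analogue). Hence
\[
CT_f=\tfrac12\sum_{m_1,\mu_1} c^+(-m_1,\mu_1)\,CT_{f_{m_1,\mu_1}},
\]
where $f_{m_1,\mu_1}$ has principal part $q^{-m_1}(e_{\mu_1}+e_{-\mu_1})$. The factor $\tfrac12$, or equivalently the $\pm\mu_1$ symmetry $c^+(-m,\mu)=c^+(-m,-\mu)$, has to be tracked carefully using the parity remark at the end of the proof of the last proposition of Section \ref{Sec4}. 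Substituting the formula
\[
CT_{f_{m_1,\mu_1}}=(-1)^{\kappa-1}2m_1^{\frac{\kappa-1}{2}}S(m_1,\mu_1)
\]
then gives
\[
\langle Z_\kappa(f),Z_\kappa(U)\rangle_{\rm fin}=(-1)^{\kappa-1}\frac{m_0^{\frac{\kappa-1}{2}}\deg Z(U)}{2}\,CT_f .
\]

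\textbf{Step 3: adding the two parts.} Lemma \ref{leminfinite} gives the archimedean part as $(-1)^{\kappa}\frac{m_0^{\frac{\kappa-1}{2}}\deg Z(U)}{2}CT_f$ plus the $L'$-term. Adding this to the finite part from Step 2, the $CT_f$ terms cancel exactly. What remains is
\[
\frac{2\sqrt N\,\Gamma(\kappa-\frac12)}{(4\pi)^{\kappa-1}\pi^{3/2}}L'(Sh_{m_0,\mu_0}(\xi_{3/2-\kappa}f),\kappa),
\]
where I identify $g=\xi_{3/2-\kappa}f$ in Theorem \ref{TheCM}.

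\textbf{Main obstacle.} I expect the main difficulty to be the normalization bookkeeping in Step 2. Three things must agree: the factor $2$ in $CT_{f_{m_1,\mu_1}}$ against the $\tfrac12$ coming from the Green current in Lemma \ref{lemgreen}, the symmetrization over $\pm\mu_1$, and the sign conventions $(-1)^\kappa$. Only if these match is the cancellation exact rather than up to a constant. I would check this first in the case $\kappa=1$, where it reduces to the Bruinier--Yang computation in \cite{BY}. I would then confirm that the parity of $P_{\kappa-1}$, which is odd or even according to $\kappa$, is compatible with the parity of $\beta_{2j+1}$ used in the even-$\kappa$ case.
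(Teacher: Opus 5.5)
Your route is the paper's route. You split the height via (\ref{equneronfirst}), rewrite the finite part as a multiple of $CT_f$ using Proposition \ref{profinite} and the formula for $CT_{f_{m_1,\mu_1}}$, and cancel it against the $CT_f$ term of Lemma \ref{leminfinite}. The paper's proof is exactly this, in three lines. You usefully make explicit two things the paper leaves implicit: the hypothesis $(D_0,2ND(f))=1$, which Proposition \ref{profinite} needs but the statement omits, and the reduction $CT_f=\tfrac12\sum c^+(-m_1,\mu_1)\,CT_{f_{m_1,\mu_1}}$.

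The step that does not close as written is the sign at the end of Step 2. The paper asserts the same identity without computing it.
\begin{itemize}
\item Your two inputs give $\sum c^+(-m_1,\mu_1)\,m_1^{\frac{\kappa-1}{2}}S(m_1,\mu_1)=(-1)^{\kappa-1}CT_f$.
\item Substituting this into Step 1, the two factors $(-1)^{\kappa-1}$ multiply to $1$. You get $\langle Z_\kappa(f),Z_\kappa(U)\rangle_{\mathrm{fin}}=+\frac{m_0^{(\kappa-1)/2}\deg Z(U)}{2}\,CT_f$, not $(-1)^{\kappa-1}$ times this.
\item For odd $\kappa$ this is what Step 3 needs. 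For even $\kappa$ it has the same sign as the archimedean term $(-1)^{\kappa}\frac{m_0^{(\kappa-1)/2}\deg Z(U)}{2}CT_f$, so the two add rather than cancel.
\end{itemize}
Regrouping the factors $2$ and $\tfrac12$ does not fix this. One of the even-$\kappa$ inputs has to be re-derived under a single orientation convention. The candidates are the Millson computation of $CT_{f_{m_1,\mu_1}}$ (which uses $p_{z_U^+}(w)=-\sqrt{m_0}$), the orientation implicit in $P_{\kappa-1}(n/\sqrt{D_0D_1})$ in Proposition \ref{profinite}, or the signs from Proposition \ref{protwo} and Theorem \ref{TheCM} that enter Lemma \ref{leminfinite}. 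So you inherit this gap from the source rather than introduce it. Note that your proposed check at $\kappa=1$ cannot detect it, since every sign is $+1$ there; the informative test is $\kappa=2$.

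A related correction: for even $\kappa$ the components of $f$ are odd, $c^+(-m,-\mu)=-c^+(-m,\mu)$, not even as you wrote. So $f_{m_1,\mu_1}$ should have principal part $q^{-m_1}(e_{\mu_1}-e_{-\mu_1})$. The cycles must also carry the CM orientation, $Z_\kappa(m,-\mu)=(-1)^{\kappa-1}Z_\kappa(m,\mu)$, as the parity of $P_{\kappa-1}$ in Proposition \ref{profinite} indicates. Otherwise, since $Z(m,-\mu)=Z(m,\mu)$ as divisors, $Z_\kappa(f)$ would vanish identically.
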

\begin{proof}
By equation (\ref{equneronfirst}), we have
\[
\langle Z_\kappa(f), Z_\kappa(U) \rangle = \langle Z_\kappa(f), Z_\kappa(U) \rangle_\infty + \langle Z_\kappa(f), Z_\kappa(U) \rangle_{\text{fin}}.
\]
Proposition \ref{profinite} gives 
\[
\langle Z_\kappa(f), Z_\kappa(U) \rangle_{\text{fin}} = (-1)^{\kappa - 1} \frac{m_0^{\frac{\kappa - 1}{2}} \deg(Z(U))}{2} CT_f.
\]
Combining with Lemma \ref{leminfinite}, the terms involving $CT_f$ cancel, and the result follows.
\end{proof}

As an application, we obtain:

\begin{corollary}
For any $f \in M^!_{3/2 - \kappa, \bar{\rho}_L}$, we have 
\[
\langle Z_\kappa(f), Z_\kappa(U) \rangle = 0.
\]
\end{corollary}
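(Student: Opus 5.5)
The plan is to deduce the corollary directly from Theorem \ref{mainthe}, using that weakly holomorphic forms are exactly the kernel of the $\xi$-operator. Let $f \in M^!_{3/2-\kappa,\bar\rho_L}$. By the exact sequence
\[
0 \rightarrow M^!_{k,\overline{\rho}_L} \rightarrow H_{k,\overline{\rho}_L} \xrightarrow{\xi_k} S_{2-k,\rho_L} \rightarrow 0
\]
with $k = 3/2-\kappa$, we have $M^!_{3/2-\kappa,\bar\rho_L} \subset H_{3/2-\kappa,\bar\rho_L}$. Hence Theorem \ref{mainthe} applies to $f$, and it expresses $\langle Z_\kappa(f), Z_\kappa(U)\rangle$ as a constant multiple of $L'(Sh_{m_0,\mu_0}(\xi_{3/2-\kappa} f),\kappa)$.

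Next, $f$ is holomorphic on $\H$, so $\partial f/\partial\bar\tau = 0$. By the definition $\xi_k(f) = 2i v^k \overline{\partial f/\partial\bar\tau}$ we get $\xi_{3/2-\kappa}(f) = 0$; equivalently, $f^- = 0$ and $f = f^+$.

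The Shimura lift $Sh_{m_0,\mu_0}$ is given by an explicit linear formula in the coefficients $b(m,\mu)$, so $Sh_{m_0,\mu_0}(0) = 0 \in S_{2\kappa}(\Gamma_0(N))$. The $L$-function of the zero form is identically zero, hence so is its derivative at $s=\kappa$. Substituting this into Theorem \ref{mainthe} gives $\langle Z_\kappa(f), Z_\kappa(U)\rangle = 0$.

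The only point that needs care is that the hypotheses of Theorem \ref{mainthe} really cover $f$. The finite-place input, Proposition \ref{profinite}, requires $(D_0, 2ND(f)) = 1$, which Theorem \ref{maintheo} assumes. I would therefore read the corollary under that same coprimality standing assumption, so the finite intersection formula and the resulting cancellation of the $CT_f$ terms hold verbatim. Under this assumption the cancellation between the archimedean part (Lemma \ref{leminfinite}) and the finite part is exact, with no residual term, and the conclusion follows immediately. No separate analysis of the principal part of $f$ is needed, since the $CT_f$ contributions cancel identically for any $f$ in $H_{3/2-\kappa,\bar\rho_L}$.
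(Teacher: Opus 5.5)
Your proposal is correct and is exactly the argument the paper intends: the corollary is presented as an immediate application of Theorem \ref{mainthe}, since $\xi_{3/2-\kappa}f=0$ for weakly holomorphic $f$, so the Shimura lift and hence the central $L$-derivative vanish. Your decision to carry along the coprimality condition $(D_0,2ND(f))=1$ from Proposition \ref{profinite} is also consistent with the paper, whose introduction says the vanishing holds for ``almost all'' Heegner cycles $Z_\kappa(U)$.
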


\section{Modularity}\label{Sec6}

In this section, we discuss the modularity conjectures proposed in this work.

\subsection{Modularity of Generating Series}

Recall that for $f \in H_{3/2-\kappa,\overline{\rho}}$, the higher regularized theta lift is defined by
\begin{equation}\label{equphij}
\Phi^j(z, f) = \frac{1}{(4\pi)^j} \times
\begin{cases}
\Phi(z, R^j_{1/2 - 2j} f), & \kappa = 2j + 1, \\
\Phi^{\mathcal{M}}(z, R^j_{-1/2 - 2j} f), & \kappa = 2j + 2.
\end{cases}
\end{equation}

Define the normalized lift  as follows
\begin{equation}\label{equnormlift}
\Phi^{(\kappa)}(z, f) = \frac{i^\kappa N^{\frac{\kappa - 1}{2}}}{4(\kappa - 1)!} R_{0, z}^\kappa \Phi^j(z, f).
\end{equation}

\begin{proposition}[\cite{ABS}, Section 4]
 The function $\Phi^{(\kappa)}(z, f) $ is a meromorphic modular form of weight $2\kappa$, and it has the Fourier expansion 
\[
\Phi^{(\kappa)}(z, f) = C_\kappa \pi^\kappa i \sum_{n \geq 1} \sum_{d \mid n} d^{-\kappa} c^+\left( \frac{n^2}{d^2}, \frac{n}{d} \right) q^n,
\]
where $C_\kappa = \frac{(-2)^\kappa}{(\kappa - 1)!}$. Moreover, the residue is given by
\[
\operatorname{res}(\Phi^{(\kappa)}(z, f)) = |4N|^{\frac{\kappa - 1}{2}} \sum_{m > 0,\, \mu \in L^\sharp/L} c^+(-m, \mu) m^{\frac{\kappa-1}{2}} Z(m, \mu).
\]
\end{proposition}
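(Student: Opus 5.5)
The plan is to start from the higher regularized theta lift $\Phi^j(z,f)$ of (\ref{equphij}), apply the $\kappa$-fold iterated raising operator $R^\kappa_{0,z}$, and check the three properties in turn: modularity, holomorphy away from the Heegner points, and the shape of the $q$-expansion and residues.

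\textbf{Modularity.} The function $\Phi^j(z,f)$ is $\Gamma_0(N)$-invariant in $z$, since the theta kernel $\Theta$ (resp.\ $\Theta^{\mathcal M}$) is invariant under the isometries in $\Gamma_0(N)$. Hence $R^\kappa_{0,z}\Phi^j$ transforms with weight $2\kappa$, because each raising operator raises the weight by $2$.

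\textbf{Meromorphy.} By Proposition \ref{protwo} and linearity, $\Phi^j(z,f)$ is a combination of higher Green functions $G_{N,\kappa}(z,Z(m,\mu))$. Each of these is an eigenfunction of $\Delta_z$ with eigenvalue $\kappa(1-\kappa)$, off the divisor. We then use the classical identity
\[
L_{2\kappa}R^\kappa_{0}F=-\,R^{\kappa-1}_{0}\bigl(\Delta_0 F+\kappa(\kappa-1)F\bigr)\cdot(\text{const}),
\]
or equivalently $\bar\partial$-closure of $R^\kappa_0$ applied to $\lambda=\kappa(1-\kappa)$ eigenfunctions (Bol-type identity). This shows that $R^\kappa_0\Phi^j$ is annihilated by $L_{2\kappa}$, i.e.\ it is holomorphic off $Z^j(f)$. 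I expect this to be the main technical point: one must track the eigenvalue of the lift, which follows from the Laplacian intertwining $\Delta_z\Theta=\Delta_\tau\Theta$ (up to constants), together with the fact that $R^j f$ is a $\Delta$-eigenform with the appropriate eigenvalue.

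\textbf{Fourier expansion and residues.} I will unfold the regularized integral in the standard Borcherds/Bruinier way against the splitting of $L$ along an isotropic vector. The contribution $\sum c^+(n^2/d^2\cdot,\dots)$ comes from the rank-one Fourier–Jacobi terms. The non-holomorphic pieces, involving $c^-$ and polynomials in $y^{-1}$, are killed by $R^\kappa_0$, because those terms are of the form $y^{1-\kappa}$ times a polynomial, and $R^\kappa_0$ annihilates $y^{a}$-type terms of low degree; this step is a direct computation with $R_0^\kappa(y^{1-\kappa}\cdots)$.

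The residue is read off from the local behavior of $G_{N,\kappa}$ near $z(w)$. Locally $G_{N,\kappa}$ looks like $P_{\kappa-1}$ of a hyperbolic cosine times a logarithm, so $R^\kappa_0$ produces a pole of order $\kappa$. Its leading coefficient, after the normalization $i^\kappa N^{(\kappa-1)/2}/4(\kappa-1)!$ and the factor $2m^{(\kappa-1)/2}$ from Proposition \ref{protwo}, gives $|4N|^{(\kappa-1)/2}c^+(-m,\mu)m^{(\kappa-1)/2}$ at each point of $Z(m,\mu)$.
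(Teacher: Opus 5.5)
The paper gives no argument for this statement; it simply quotes \cite{ABS}, Section~4. Your skeleton is the standard route: invariance, holomorphy of $R_0^\kappa$ applied to a $\Delta_0$-eigenfunction, Borcherds-type unfolding, and local analysis of the Green function. Your identity $L_{2\kappa}R_0^\kappa F=-R_0^{\kappa-1}\bigl(\Delta_0+\kappa(\kappa-1)\bigr)F$ is correct, with constant $1$ for $\Delta_0=-y^2(\partial_x^2+\partial_y^2)$. As written, however, the proposal has genuine gaps.

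\textbf{The case $\kappa=1$ must be excluded.} The reduction ``by Proposition~\ref{protwo} and linearity'' silently uses two facts: that $H_{3/2-\kappa,\bar\rho_L}$ is spanned by the convergent harmonic Poincar\'e series $F_{m,\mu}$, and that $G_{N,\kappa}$ is a genuine $\kappa(1-\kappa)$-eigenfunction. Both hold for $\kappa\ge 2$: in negative weight, a form with vanishing principal part is zero by the $\xi$-pairing. Neither is available for $\kappa=1$. There $\Delta_z\Phi(z,f)=c^+(0,0)$, so $R_0\Phi$ acquires the non-holomorphic term $c^+(0,0)/y$. In addition, the Weyl-vector term linear in $y$ produces a constant term that is absent from the displayed expansion. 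You must restrict to $\kappa\ge2$.

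\textbf{The $q$-expansion is asserted, not derived, and the mechanism is misdescribed.} This expansion is the real content of the proposition. $R_0^\kappa$ does kill the $\bar q^{\,n}$-terms and the constant term $y^{1-\kappa}$, since $R_0^\kappa(y^{-a}\bar q^{\,n})=\prod_{i=0}^{\kappa-1}(i-a)\,y^{-a-\kappa}\bar q^{\,n}$ vanishes for $0\le a\le \kappa-1$. It does not kill the non-holomorphic parts of the $q^n$-terms. Because the output is holomorphic, it sends $q^n p(1/y)$ to $(-4\pi n)^\kappa p(0)\,q^n$, and this factor must be tracked. (The $c^-$ do not enter the $q^n$-terms at all.)

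Doing this for $\kappa=2j+1$: the Fourier--Jacobi term of $\Phi(z,R^jf)$ attached to $\mu_r$ ($r>0$) with multiplicity $l$ has $y^0$-part $\frac{2}{l}(-\pi r^2/N)^j\,c^+(\frac{r^2}{4N},r)\,q^{lr}$. Multiplying by $(4\pi)^{-j}(-4\pi n)^\kappa$ and the normalisation (\ref{equnormlift}) reproduces the prefactor $C_\kappa\pi^\kappa i$, but with coefficient
\[
\sum_{lr=n} l^{\kappa-1}r^{2\kappa-1}\,c^+\Bigl(\frac{r^2}{4N},r\Bigr)=n^{2\kappa-1}\sum_{d\mid n}d^{-\kappa}c^+\Bigl(\frac{n^2}{4Nd^2},\frac nd\Bigr).
\]
For $\kappa=1$ this is the familiar fact that $\Phi^{(1)}=\Psi'/\Psi$, for the Borcherds product $\Psi$ with $\Phi(z,f)=-4\log|\Psi(z,f)|$. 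Its $q^n$-coefficient is $-2\pi i\sum_{d\mid n}\frac nd\, c^+(\cdot)$, which is $n$ times the displayed one. So the displayed formula appears to have lost a factor $n^{2\kappa-1}$, and a proof that never computes the coefficients cannot detect this.

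\textbf{The residue constant is also only asserted.} It comes from $(2i\partial_z)^\kappa\log(z-\varrho)=(2i)^\kappa(-1)^{\kappa-1}(\kappa-1)!\,(z-\varrho)^{-\kappa}$, combined with Proposition~\ref{protwo} and (\ref{equnormlift}). You also need that no lower-order polar terms occur, as the definition of $\operatorname{res}$ requires. This holds because $R_0^\kappa$ of the point-pair invariant $g_\kappa(\cdot,\varrho)$ is holomorphic off $\varrho$ and weight-$2\kappa$ invariant under the stabiliser of $\varrho$. It is therefore exactly a multiple of $\bigl((z-\varrho)(z-\bar\varrho)\bigr)^{-\kappa}$.
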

If all $c(n, \mu) \in K$, then all coefficients of $\Phi^{(\kappa)}(z, f)$ belong to $\pi^\kappa i K$.

Let $F$ be a meromorphic modular form of weight $2\kappa$. Suppose that at each pole $\varrho \in \mathbb{H}$, it admits the local expansion
\[
F(z) = a_\varrho \left( \frac{(z - \varrho)(z - \bar{\varrho})}{\varrho - \bar{\varrho}} \right)^{-\kappa} + O(1), \quad z \to \varrho.
\]
Following \cite{ABS}, the residue divisor of $F$ is defined as
\[
\operatorname{res}(F) = \sum_{[\varrho] \in Y_0(N)} \frac{a_\varrho}{w_\varrho} [\varrho] \in \operatorname{Div}(X_0(N)),
\]
where $w_\varrho$ denotes half the order of the stabilizer of $\varrho$ in $\Gamma_0(N)$.

Let $\mathrm{Heeg}_\kappa(X)$ be the group generated by the Heegner cycles $Z_\kappa(m, \mu)$, and let $\mathrm{Pin}$ be the subgroup of principal cycles. A divisor is called \emph{algebraic principal} if there exists an algebraic modular form of weight $2\kappa$ whose residue divisor coincides with it. In analogy with \cite[Definition 4.4]{Ze}, we say that a cycle $Z \in \mathrm{Heeg}_\kappa(X)$ is \emph{algebraic principal} if it is defined over such an algebraic principal divisor.

Let $\mathrm{Pin}^{\mathrm{alg}}$ denote the subgroup of algebraic principal cycles. We define the quotient groups
\[
\mathrm{Heeg}_\kappa(\mathrm{Cl}(X)) = \mathrm{Heeg}_\kappa(X) / \mathrm{Pin}, \qquad
\mathrm{Heeg}_\kappa^{\mathrm{alg}}(\mathrm{Cl}(X)) = \mathrm{Heeg}_\kappa(X) / \mathrm{Pin}^{\mathrm{alg}}.
\]

Let $F$ be a meromorphic modular form of weight $2\kappa$. Assume that at each
pole $\varrho \in \H$,  it has the following expansions
$$F(z)=a_\rho \bigg(\frac{(z-\rho)(z-\bar\rho)}{\rho -\bar\rho}\bigg)^{-\kappa}+O(1), z\rightarrow \rho.$$
Then the residue divisor is defined in \cite{ABS} as 
$$res(F)=\sum_{[\rho] \in Y_0(N)}\frac{a_p}{w_\rho}[\rho] \in Div(X_0(N)).$$
Here $w_\rho$ is half the order of the stabilizer of $\rho$ in $\Gamma_0(N)$.

Let $\mathrm{Heeg}_{\kappa}(X)$ be the group generated by Heegner cycles $Z_\kappa(m, \mu)$, and let $\mathrm{Pin}$ be the subgroup of principal cycles.  
A divisor is called algebraic principal if there exists an algebraic  modular form of weight $2\kappa$ whose residue  divisor  coincides with it. Analogously to \cite[Definition 4.4]{Ze}, we say that a cycle $Z \in \mathrm{Heeg}_{\kappa}(X)$ is algebraic principal if it is defined over such an algebraic principal divisor.
Let $\mathrm{Pin}^{\mathrm{alg}}$ denote the subgroup of algebraic principal cycles, and define the quotient groups by
\[
\mathrm{Heeg}_{\kappa}(\mathrm{Cl}(X)) = \mathrm{Heeg}_{\kappa}(X) / \mathrm{Pin}, \quad
\mathrm{Heeg}_{\kappa}^{\mathrm{alg}}(\mathrm{Cl}(X)) = \mathrm{Heeg}_{\kappa}(X) / \mathrm{Pin}^{\mathrm{alg}}.
\]

Let $G \in S_{2\kappa}^{\mathrm{new}}(N)$ be a normalized newform, and let $F_G$ be the totally real number field generated by the eigenvalues of $G$. Under the Shimura correspondence $\mathrm{Sh}_{m_0, \mu_0}$, there exists a newform $g \in S_{\frac{1}{2} + \kappa, \rho_L}^{\mathrm{new}}$ corresponding to $G$, with all Fourier coefficients in $F_G$. Moreover, there exists $f \in H_{3/2 - \kappa, \bar{\rho}_L}(F_G)$ such that
\[
\xi_{3/2 - \kappa}(f) = \| g \|^{-2} g,
\]
and all coefficients of the principal part of $f$ lie in $F_G$.

It is conjectured in \cite{ABS} that $Z_\kappa(f)$ vanishes (i.e., $Z_\kappa(f) \in \mathrm{Pin}$) if and only if all coefficients $c^+(m^2, m)$ belong to $F_G$.

Now let $f$ be a weakly holomorphic form of weight $3/2 - \kappa$. Then
\[
R^j_{3/2 - \kappa} f(\tau) = \sum_{m=0}^j\sum_{n, \mu} c(n, \mu, m) \frac{q^n}{v^m} e_\mu
\]
is an almost holomorphic modular form, with coefficients:
\[
c(n, \mu, m) = (-1)^j (4\pi)^{j - m} \binom{j}{m} n^{j - m} \prod_{s = 1}^m (k - j - s) c(n, \mu).
\]
In particular, the holomorphic part satisfies:
\begin{equation}\label{equcoefhol}
c(n, \mu, 0) = (-4\pi)^j n^j c(n, \mu).
\end{equation}

\begin{proposition}\label{theweak}
Let $f$ be a weakly holomorphic form with algebraic coefficients in a number field. Then $Z_\kappa(f)$ is algebraic principal, i.e., $Z_\kappa(f) \in \mathrm{Pin}^{\mathrm{alg}}$.
\end{proposition}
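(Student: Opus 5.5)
The plan is to exhibit an explicit algebraic meromorphic modular form of weight $2\kappa$ whose residue divisor is the divisor underlying $Z_\kappa(f)$. The natural candidate is the normalized lift $\Phi^{(\kappa)}(z,f)$ of (\ref{equnormlift}), suitably rescaled. By the Proposition from \cite{ABS} recalled above, $\Phi^{(\kappa)}(z,f)$ is a meromorphic modular form of weight $2\kappa$. Its residue divisor is
\[
\operatorname{res}(\Phi^{(\kappa)}(z,f)) = |4N|^{\frac{\kappa-1}{2}}\sum_{m>0,\mu} c^+(-m,\mu)\, m^{\frac{\kappa-1}{2}} Z(m,\mu).
\]
Up to the constant $|4N|^{\frac{\kappa-1}{2}}$, this is exactly the divisor over which $Z_\kappa(f)=\sum c^+(-m,\mu)Z_\kappa(m,\mu)$ is built, since $Z_\kappa(m,\mu)=m^{\frac{\kappa-1}{2}}\sum_{x\in Z(m,\mu)}S_\kappa(x)$. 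So once we know that a constant multiple of $\Phi^{(\kappa)}(z,f)$ is algebraic, $Z_\kappa(f)$ is defined over an algebraic principal divisor and lies in $\mathrm{Pin}^{\mathrm{alg}}$ by definition.

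The main step is algebraicity. Because $f$ is weakly holomorphic, $f=f^+$, so all $c^+(n,\mu)=c(n,\mu)$ lie in the number field $K$. The Fourier expansion from \cite{ABS} gives
\[
\Phi^{(\kappa)}(z,f) = C_\kappa \pi^\kappa i \sum_{n\ge1}\sum_{d\mid n} d^{-\kappa} c\!\left(\tfrac{n^2}{d^2},\tfrac nd\right)q^n ,
\]
with $C_\kappa\in\Q$. Hence $F:=(C_\kappa\pi^\kappa i)^{-1}\Phi^{(\kappa)}(z,f)$ has all Fourier coefficients at $\infty$ in $K$; as noted after the proposition, the coefficients lie in $\pi^\kappa iK$. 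The residue divisor scales linearly, so $\operatorname{res}(F)$ is a nonzero constant multiple, namely $(C_\kappa\pi^\kappa i)^{-1}|4N|^{\frac{\kappa-1}{2}}$, of the divisor of $Z_\kappa(f)$.

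I expect two points to be the main obstacles. The first is that this constant is transcendental, so I must argue that the class in $\mathrm{Pin}^{\mathrm{alg}}$ is insensitive to the scalar. I would handle this by working with $\mathrm{Heeg}_\kappa(X)\otimes\C$, or equivalently by noting that the definition only asks that the cycle be supported on a divisor proportional to the residue divisor of an algebraic form. The second is justifying that the rational $q$-expansion really makes $F$ an algebraic meromorphic modular form, meaning its values at CM points and its expansions at other cusps are algebraic. For this I would use (\ref{equcoefhol}): the holomorphic part of $R^j f$ has coefficients $(-4\pi)^j n^j c(n,\mu)$, so after removing powers of $\pi$ the lift is built from algebraic data. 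The $q$-expansion principle, applied to the holomorphic form $F\cdot\prod \Delta$-type factors that clear the poles, then gives algebraicity over $K$. If the stated Fourier expansion does not include the $\mu$-twists at other cusps, I would use $\mathrm{Aut}(\C/\overline{\Q})$-equivariance of the Borcherds-type lift (\cite{Ze}) to conclude.
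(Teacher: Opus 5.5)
Your proposal follows the paper's proof. Both arguments take the normalized ABS lift $\Phi^{(\kappa)}(z,f)$ and note that for weakly holomorphic $f$ with coefficients in $K$ its Fourier coefficients lie in $\pi^\kappa iK$. They then conclude that its residue divisor, which is proportional to the divisor underlying $Z_\kappa(f)$, is algebraic principal, so $Z_\kappa(f)\in\mathrm{Pin}^{\mathrm{alg}}$.

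The paper silently absorbs the factor $C_\kappa\pi^\kappa i$ into what ``algebraic'' means and never discusses expansions at other cusps. Your extra care on both points refines the same argument rather than departing from it. One small correction: to clear the poles at CM points you need factors that vanish there, such as $\Delta\,(j(z)-j(\varrho))$, not powers of $\Delta$ alone.
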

\begin{proof}
We follow the proof in \cite{Ze, ABS}.
Assume that the coefficients of $f$ are contained in $K$. Then coefficients of $\Phi^{(\kappa)}(z, f)$ are contained in $\pi^\kappa i K$. Thus $res(\Phi^{(\kappa)}(z, f))$ is algebraic principal,
and hence $Z_\kappa(f) \in \mathrm{Pin}^{\mathrm{alg}}$.
\end{proof}

\begin{lemma}\label{lemcomp}
Let the notation be as above. 
The cycle $Z_\kappa(f)$ belongs to the $G$-component of $\mathrm{Heeg}_{\kappa}^{\mathrm{alg}}(\mathrm{Cl}(X))$.
\end{lemma}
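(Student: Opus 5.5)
The argument runs through Hecke operators. The goal is to show that for every Hecke operator $T$ coprime to $N$, the cycle $T Z_\kappa(f) - \lambda_G(T) Z_\kappa(f)$ lies in $\mathrm{Pin}^{\mathrm{alg}}$. Here $\lambda_G(T)$ is the eigenvalue of $G$. Since the Hecke algebra acts on $\mathrm{Heeg}_\kappa^{\mathrm{alg}}(\Cl(X))\otimes F_G$ and the $G$-component is cut out by these eigenvalue conditions, this is exactly the statement of Lemma \ref{lemcomp}.

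\textbf{Step 1.} Transfer the Hecke action from cycles to harmonic Maass forms. The Hecke operators $T(p^2)$ on $H_{3/2-\kappa,\bar\rho_L}$ (vector-valued, as in Bruinier--Ono) are compatible with the Heegner cycle map, so that
\[
Z_\kappa(T(p^2)f) = p^{\kappa-1}\,T(p)\,Z_\kappa(f),
\]
up to the normalization fixed by the factor $m^{\frac{\kappa-1}{2}}$ in the definition of $Z_\kappa(m,\mu)$. This is the classical action of Hecke correspondences on Heegner points, with discriminants $D$ replaced by $Dp^2$. It should be checked on the generators $Z_\kappa(m,\mu)$ via the Poincaré series $F_{m,\mu}$ and then extended by linearity.

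\textbf{Step 2.} Use that $\xi_{3/2-\kappa}$ intertwines the Hecke operators up to the complex conjugate of the scalar. Since $g$ is a newform with real eigenvalues $\lambda_G(p)$ under the Shimura correspondence, the form
\[
h := T(p^2)f - \lambda_G(p) f
\]
satisfies $\xi_{3/2-\kappa}(h)=0$. By the exact sequence $0\to M^!\to H\to S\to 0$, it follows that $h\in M^!_{3/2-\kappa,\bar\rho_L}$. Moreover $f$ has principal part in $F_G$, the Hecke operators are rational, and $\lambda_G(p)\in F_G$. Hence $h$ has principal part in $F_G$, and therefore all Fourier coefficients of $h$ are algebraic, since weakly holomorphic forms with algebraic principal part have algebraic coefficients (a result of McGraw).

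\textbf{Step 3.} Apply Proposition \ref{theweak} to $h$. This gives $Z_\kappa(h)\in\mathrm{Pin}^{\mathrm{alg}}$. Combining with Step 1,
\[
(T(p)-\lambda_G(p))\,Z_\kappa(f) \in \mathrm{Pin}^{\mathrm{alg}}
\]
(after adjusting the power $p^{\kappa-1}$), which proves the lemma.

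The main obstacle is Step 1: making precise the compatibility of the Hecke action on $H_{3/2-\kappa,\bar\rho_L}$ with the action on cycles, including the correct normalizing powers of $p$ and the handling of non-primitive discriminants $Dp^2$. My approach would be to compare the Fourier expansions of $\Phi^{(\kappa)}(z,f)$ and $\Phi^{(\kappa)}(z,T(p^2)f)$ from the \cite{ABS} proposition, together with their residue divisors. The lift $\Phi^{(\kappa)}$ is Hecke-equivariant because the theta kernel is, so the identity of residues gives the required relation directly at the level of divisors and hence of cycles.
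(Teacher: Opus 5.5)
Your proposal is essentially the paper's argument. The paper's proof just invokes the method of \cite[Theorem 7.5]{BO}, which is this same Hecke-eigenvalue argument: Hecke-equivariance of $\xi_{3/2-\kappa}$ makes $T(p^2)f-\lambda f$ weakly holomorphic with algebraic principal part, Proposition \ref{theweak} puts its cycle in $\mathrm{Pin}^{\mathrm{alg}}$, and Hecke-equivariance of $f\mapsto Z_\kappa(f)$ transfers this to $(T(p)-\lambda)Z_\kappa(f)$. The paper also takes for granted the compatibility $Z_\kappa(T(p^2)f)\sim T(p)Z_\kappa(f)$ that you single out as the main obstacle, including the divisor-to-cycle passage, so your sketch is at the same level of completeness as its proof.
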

\begin{proof}
By the same method as in \cite[Theorem 7.5]{BO}, we obtain the result.
\end{proof}

In his work on modularity problems, Borcherds introduced an innovative approach via Serre duality in \cite{BoDuke}. He defined a pairing $\{~,~\}$ between formal power series and formal Laurent series, and established the following key criterion: a formal power series $g$ is a cusp form if and only if $\{f, g\} = 0$ for every weakly holomorphic modular form $f$.

The generation function is defined by 
\[
\phi_\kappa = \sum_{m > 0, \mu} Z_\kappa(m, \mu) q^m e_\mu .
\]
Then we obtain:

\begin{theorem}\label{themainone}
The following hold:
\begin{enumerate}
    \item $\phi_\kappa \in \mathrm{Heeg}_{\kappa}^{\mathrm{alg}}(\mathrm{Cl}(X)) \otimes S_{1/2 + \kappa, \rho_L}$ is a cusp form.
    \item The $G$-component is given by
    \[
    \phi_\kappa^G = \sum_{m > 0, \mu} Z_\kappa^G(m, \mu) q^m e_\mu= g(\tau) \otimes Z_\kappa(f).
    \]
\end{enumerate}
\end{theorem}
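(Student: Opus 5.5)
The plan is to use Borcherds' Serre-duality criterion from \cite{BoDuke}, applied to coefficients in the group $\mathrm{Heeg}_{\kappa}^{\mathrm{alg}}(\mathrm{Cl}(X))\otimes\C$. A formal power series $\sum_{m>0,\mu} a(m,\mu)q^m e_\mu$ of weight $1/2+\kappa$ and type $\rho_L$ is a cusp form exactly when
\[
\{f,\phi\}=\sum_{m>0,\mu} c^+(-m,\mu)\,a(m,\mu)=0
\]
for every $f\in M^!_{3/2-\kappa,\bar\rho_L}$. This criterion is purely linear algebra on $q$-expansions, since it comes from the duality between $M^!_{3/2-\kappa,\bar\rho_L}$ and the obstruction space $S_{1/2+\kappa,\rho_L}$. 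It therefore holds for series with coefficients in any $\Q$-vector space. Applied to $\phi_\kappa$, the pairing $\{f,\phi_\kappa\}$ is literally $\sum c^+(-m,\mu)Z_\kappa(m,\mu)=Z_\kappa(f)$ by the definition of $Z_\kappa(f)$. So part (1) reduces to showing $Z_\kappa(f)=0$ in $\mathrm{Heeg}_{\kappa}^{\mathrm{alg}}(\mathrm{Cl}(X))$ for all weakly holomorphic $f$.

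The first step is to reduce to rational coefficients. The space $M^!_{3/2-\kappa,\bar\rho_L}$ has a basis with rational Fourier coefficients: principal parts determine forms up to the (zero) cuspidal part, and the Galois action on Fourier coefficients preserves the space for Weil representations. Hence it suffices to treat $f$ with coefficients in a number field, indeed in $\Q$. Proposition \ref{theweak} then gives $Z_\kappa(f)\in\mathrm{Pin}^{\mathrm{alg}}$. Concretely, the normalized lift $\Phi^{(\kappa)}(z,f)$ is a meromorphic modular form with coefficients in $\pi^\kappa iK$, and its residue divisor is $|4N|^{(\kappa-1)/2}$ times the divisor underlying $Z_\kappa(f)$. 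So the orthogonality relation holds in the quotient, and $\phi_\kappa$ is a cusp form with values in $\mathrm{Heeg}_{\kappa}^{\mathrm{alg}}(\mathrm{Cl}(X))$. This is the step I expect to be delicate. One must check that the criterion is valid over $\Q$ with infinite-dimensional coefficient group; I would handle this by applying any $\Q$-linear functional $\lambda$ and using that the $\lambda(a(m,\mu))$ all lie in a space where the classical criterion holds. One must also check that the normalizing constant $|4N|^{(\kappa-1)/2}$ is harmless, since it only rescales the cycle.

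For part (2), I would decompose $S_{1/2+\kappa,\rho_L}$ into Hecke eigenspaces, which correspond via the Shimura lift $Sh_{m_0,\mu_0}$ to newforms $G$. The $G$-component of $\phi_\kappa$ then has the form $g\otimes Y$ for a single class $Y$, because the newform space is one-dimensional: multiplicity one for $g$ corresponding to $G$. To identify $Y$, pair the cusp form $\phi_\kappa$ with $f\in H_{3/2-\kappa,\bar\rho_L}$, where $\xi_{3/2-\kappa}f=\|g\|^{-2}g$. The Bruinier--Funke pairing gives
\[
\{f,\phi_\kappa\}=\langle \phi_\kappa,\xi f\rangle_{\mathrm{Pet}},
\]
which picks out exactly the $g$-coefficient: $\langle g\otimes Y,\|g\|^{-2}g\rangle=Y$. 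The left side is again $Z_\kappa(f)$ by definition, so $Y=Z_\kappa(f)$. Lemma \ref{lemcomp} confirms that $Z_\kappa(f)$ lies in the $G$-component, giving $\phi_\kappa^G=g\otimes Z_\kappa(f)$.
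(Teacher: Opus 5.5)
Your part (1) is the paper's argument: Borcherds' Serre-duality criterion \cite{BoDuke}, the identity $\{f,\phi_\kappa\}=Z_\kappa(f)$, and Proposition \ref{theweak}. Your reduction to $f$ with rational coefficients and your functional argument for an infinite-dimensional coefficient group are genuine improvements on the paper. The paper applies Proposition \ref{theweak} to arbitrary weakly holomorphic $f$, although that proposition is stated only for algebraic coefficients.

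Part (2) has a gap. In the statement, $\phi_\kappa^G$ means $\sum_{m,\mu} Z_\kappa^G(m,\mu)q^m e_\mu$: each \emph{cycle} coefficient is projected to the $G$-isotypic part of the cycle group under the Hecke correspondences. What you compute is the component in the \emph{modular-form} factor. Write $\phi_\kappa=\sum_i g_i\otimes Y_i$ over an orthogonal Hecke eigenbasis $g_1=g, g_2,\dots$ of $S_{1/2+\kappa,\rho_L}$. Your Bruinier--Funke step $\{f,h\}=(h,\xi f)_{\mathrm{Pet}}$ correctly gives $Y_1=Z_\kappa(f)$. Now write $Y^G$ for the $G$-component of a class $Y$. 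The cycle-side projection is then $\sum_{m,\mu}Z_\kappa^G(m,\mu)q^m e_\mu=\sum_i g_i\otimes Y_i^G$. Lemma \ref{lemcomp} only gives $Y_1^G=Y_1$; it says nothing about $Y_i^G$ for $i\ge 2$, and these must vanish.

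That vanishing needs Hecke equivariance of the generating series. The Hecke correspondences acting on the coefficients $Z_\kappa(m,\mu)$ must match the half-integral weight operators $T(p^2)$ acting on $q$-expansions. Then each $Y_i$ lies in the eigenspace of the eigen-system of $g_i$, and for $i\ge 2$ this eigen-system differs from that of $G$ by multiplicity one. This is exactly the step the paper imports from \cite[Theorem 7.7]{BO} (``$\phi_\kappa^G$ and $g$ share the same eigenvalues''). It is also what is needed to apply your multiplicity-one sentence to the cycle-side projection at all.

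With this step added, your argument coincides with the paper's. It also makes explicit, via the pairing with $f$, why the scalar in $\phi_\kappa^G=g\otimes Y$ is exactly $Z_\kappa(f)$, which the paper leaves implicit.
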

\begin{proof}
(1)
By Proposition \ref{theweak}, for any weakly holomorphic modular form  $f$, the paring $\{ f, \phi_\kappa \} = Z_\kappa(f)$ vanishes in $\mathrm{Heeg}_{\kappa}^{\mathrm{alg}}(\mathrm{Cl}(X))$. According to 
the  Serre duality, we know that $\phi_\kappa$ is a cusp form.

(2)
 As in \cite[Theorem 7.7]{BO}, $\phi_\kappa^G$ and $g$ share the same eigenvalues. By Lemma \ref{lemcomp}, $Z_\kappa(f)$ lies in the $G$-component. Thus we have
\[
\phi_\kappa^G = g(\tau) \otimes Z_\kappa(f).
\]
Then we complete the proof.
\end{proof}

\begin{conjecture}
Theorem \ref{themainone} remains valid when $\mathrm{Heeg}_{\kappa}^{\mathrm{alg}}(\mathrm{Cl}(X))$ is replaced by $\mathrm{Heeg}_{\kappa}(\mathrm{Cl}(X))$.
\end{conjecture}

By the previous proof, it suffices to show that if $f$ is a weakly holomorphic form with algebraic coefficients, then $Z_\kappa(f)$ is principal.

Moreover, we have the following conjecture:
 \begin{conjecture} $\mathrm{Pin}^{\mathrm{alg}} \subseteq \mathrm{Pin}$.\end{conjecture}

\begin{conjecture} 
  The arithmetic generating function
\begin{equation}
\hat{\phi}\kappa = \sum_{m>0, \mu} \widehat{\mathcal{Z}}\kappa(m, \mu) q^m e_\mu \in \widehat{\mathrm{CH}}^{\kappa}(\mathcal{Y}) \otimes \mathbb{C}[[q]]
\end{equation}
is also a cusp form. Here $\widehat{\mathrm{CH}}^{\kappa}(\mathcal{Y})$ is the arithmetic Chow group.
\end{conjecture}
For $\kappa=1$, the author proved the modularity of the arithmetic generating function on the modular curve $X_0(N)$ in \cite{DY}.

\subsection{Related works}

We define the space
\begin{equation}
 W^G=span\{Z_{\kappa}^G(m, \mu)|(4Nm, 2N)=1\}.
\end{equation}

\begin{theorem}\cite[Theorem 2]{Xue}
Assume that the  height pairing on Heegner cycles is positive definite.
Then the space $W^G$ has dimension at most $1$.
\end{theorem}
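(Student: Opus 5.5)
The statement is a cited result: Xue's theorem that, assuming the height pairing is positive definite, $W^G$ has dimension at most $1$. I would not reprove it from scratch. Instead I would derive it from the Gross--Zagier--Zhang type formula, Theorem \ref{mainthe}, combined with the Hecke-equivariance of the generating series, in the style of Gross--Kohnen--Zagier.

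The idea is to show that for all admissible indices $(m,\mu)$ and $(m',\mu')$ the Gram matrix of the $G$-components has rank one. Concretely, the goal is
\[
\langle Z_\kappa^G(m,\mu), Z_\kappa^G(m',\mu')\rangle = c_G\, b(m,\mu)\,\overline{b(m',\mu')},
\]
where $g=\sum b(m,\mu)q^me_\mu$ is the newform attached to $G$ and $c_G$ is a constant proportional to $L'(G,\kappa)$. Given this, positive definiteness finishes the argument. For any vector $\sum a_i Z^G_\kappa(m_i,\mu_i)$ in the span, its height equals $c_G|\sum a_i b(m_i,\mu_i)|^2$. Hence the kernel of the linear functional $\sum a_i Z_i\mapsto \sum a_i b(m_i,\mu_i)$ consists of null vectors, and these vanish by positivity. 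So $\dim W^G\le 1$.

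The rank-one identity would be established in three steps.

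\textbf{Step 1.} Fix $(m',\mu')=(m_0,\mu_0)$ with $D_0=-4Nm_0$ fundamental and coprime to $2N$. The coprimality condition $(4Nm,2N)=1$ in the definition of $W^G$ is exactly what lets us identify $Z_\kappa(m_0,\mu_0)=Z_\kappa(U)$ and apply Theorem \ref{mainthe}. For the Poincaré-type form $f=F_{m,\mu}$ this gives $\langle Z_\kappa(m,\mu),Z_\kappa(U)\rangle$ as $L'$ of $\mathrm{Sh}_{m_0,\mu_0}(\xi F_{m,\mu})$.

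\textbf{Step 2.} Project onto $G$. The function $\xi F_{m,\mu}$ is, up to a constant, the Petersson coefficient functional. Its $g$-component is therefore $\overline{b(m,\mu)}\,g/\|g\|^2$, and its Shimura lift has $G$-component $b(m_0,\mu_0)\,\overline{b(m,\mu)}\,G/\|g\|^2$. This step uses the multiplicity-one property of the newform $g$ under $\mathrm{Sh}_{m_0,\mu_0}$.

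\textbf{Step 3.} Use Theorem \ref{themainone}(2), i.e. $\phi_\kappa^G=g\otimes Z_\kappa(f)$, to pass from a single fundamental $m_0$ to arbitrary $m'$. Coefficientwise we have $Z_\kappa^G(m',\mu')=b(m',\mu')Z_\kappa(f)$, so every pairing factors through the same two vectors.

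The main obstacle is that Theorem \ref{themainone} holds only modulo $\mathrm{Pin}^{\mathrm{alg}}$, not in the Chow group. Transporting the identity to the height pairing requires the height to vanish on $\mathrm{Pin}^{\mathrm{alg}}$. Corollary \ref{corzero} gives only the partial statement $\langle Z_\kappa(f),Z_\kappa(U)\rangle=0$ for weakly holomorphic $f$. I would try to circumvent this by proving the rank-one identity directly via Step 1 for all pairs with both discriminants fundamental, which is what the restriction in $W^G$ suggests. The price is needing Theorem \ref{mainthe} for $U$ ranging over every admissible index and checking that Hecke-equivariance respects the condition $(D_0,2ND(f))=1$.
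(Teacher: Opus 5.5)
The paper gives no proof of this statement; it is quoted from \cite[Theorem 2]{Xue}. Your skeleton (show the Gram matrix of the $G$-components has rank one, then use positivity) is the Gross--Kohnen--Zagier strategy that Xue follows, and the closing linear algebra is fine. The gap is that the rank-one identity is never actually obtained.

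Step 3 is unavailable, as you note. Theorem \ref{themainone} holds only in $\mathrm{Heeg}_\kappa^{\mathrm{alg}}(\Cl(X))$, and nothing shows the height pairing vanishes on $\mathrm{Pin}^{\mathrm{alg}}$; that is essentially the open Conjecture \ref{conjsec}, or $\mathrm{Pin}^{\mathrm{alg}}\subseteq\mathrm{Pin}$. Worse, if $\phi_\kappa^G=g\otimes Z_\kappa(f)$ held in a group on which the height is defined, $Z_\kappa^G(m,\mu)=b(m,\mu)Z_\kappa(f)$ would give $\dim W^G\le 1$ at once, with no positivity assumption. So Step 3 assumes more than the conclusion.

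Everything therefore rests on your fallback, and there the decisive entries are missing. Theorem \ref{maintheo} requires $(D_0,2ND(f))=1$. This is what Proposition \ref{profinite} needs, and for $D(f)=D_0$ the two cycles share support, so the CM value in Lemma \ref{leminfinite} would sit on the singularity of the Green function. Hence it never yields the diagonal entries $\langle Z_\kappa^G(U),Z_\kappa^G(U)\rangle$. Without them positivity gives nothing: if the relevant $b(m,\mu)$ vanished, all off-diagonal data would be zero, which is compatible with nonzero, mutually orthogonal cycles. The missing ingredient is Zhang's self-height formula, which involves $L'(G,\kappa)L(G,\chi_{D_0},\kappa)$. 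You must combine it with the Waldspurger/Kohnen--Zagier formula writing $b(m_0,\mu_0)^2/\|g\|^2$ as an explicit multiple of $|D_0|^{\kappa-1/2}L(G,\chi_{D_0},\kappa)/\|G\|^2$, and check that the resulting constant equals the $c_G$ of the off-diagonal terms.

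The same hypothesis also excludes pairs of fundamental discriminants sharing a prime factor. To chain the proportionalities you need either a separate computation for such pairs, or an auxiliary fundamental discriminant coprime to both with nonzero coefficient of $g$, which is a nonvanishing input you do not mention.

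The condition $(4Nm,2N)=1$ says $(D,2N)=1$, not that $D$ is fundamental. So your claim that it is what allows $Z_\kappa(m_0,\mu_0)=Z_\kappa(U)$ is incorrect; that identification needs $D_0$ fundamental. $W^G$ contains cycles of discriminant $D_0c^2$, to which Theorem \ref{mainthe} does not apply. These must be reduced to fundamental discriminants by the Hecke relation expressing $Z_\kappa^G(m_0c^2,c\mu_0)$ as a multiple of $Z_\kappa^G(m_0,\mu_0)$, compatibly with the matching relation between $b(m_0c^2,c\mu_0)$ and $b(m_0,\mu_0)$.

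Finally, the ``projection onto $G$'' in Step 2 must be performed on the height side as well, not only on the $L'$ side. Apply Theorem \ref{maintheo} to Hecke translates $T_nF_{m,\mu}$ with $(n,2ND_0)=1$, so the coprimality hypothesis survives. Then use Hecke-equivariance of $f\mapsto Z_\kappa(f)$, self-adjointness of Hecke operators for the height, and strong multiplicity one to build the $G$-idempotent from such $T_n$.
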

\begin{remark}
We conjecture  that $W^G$ is generated by $Z_\kappa(f)$.
\end{remark}

\begin{proposition}
\[
\text{Theorem \ref{mainthe}} \quad \overset{\text{Modularity}}{\Longrightarrow} \quad \text{Gross--Zagier--Zhang formula}.
\]
\end{proposition}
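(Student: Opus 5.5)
Taking $Z_\kappa(U)=Z_\kappa(m_0,\mu_0)$, the plan is to combine three inputs: Theorem \ref{mainthe}, the modularity statement Theorem \ref{themainone}(2) (now assumed to hold in the Chow group, i.e.\ Conjecture \ref{confirst}), and the Shimura-lift normalisations.

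\textbf{Step 1: reduce the left side to $Z_\kappa(f)$.} Let $f\in H_{3/2-\kappa,\bar\rho_L}$ be the form attached to $G$, so that $\xi_{3/2-\kappa}(f)=\|g\|^{-2}g$. By Theorem \ref{themainone}(2), $\phi_\kappa^G=g\otimes Z_\kappa(f)$. Comparing the $q^{m_0}e_{\mu_0}$ coefficients gives
\[
Z_\kappa^G(m_0,\mu_0)=b(m_0,\mu_0)\,Z_\kappa(f),
\]
where $b(m_0,\mu_0)$ is the corresponding coefficient of $g$. Since the $G$-isotypic components are orthogonal for the height pairing (Hecke operators are self-adjoint), we get
\[
\langle Z^G_\kappa(m_0,\mu_0),Z^G_\kappa(m_0,\mu_0)\rangle=b(m_0,\mu_0)\,\langle Z_\kappa(f),Z_\kappa(U)\rangle .
\]

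\textbf{Step 2: apply Theorem \ref{mainthe}.} This gives
\[
\langle Z_\kappa(f),Z_\kappa(U)\rangle=\frac{2\sqrt N\,\Gamma(\kappa-\frac12)}{(4\pi)^{\kappa-1}\pi^{3/2}}\,\|g\|^{-2}\,L'(Sh_{m_0,\mu_0}(g),\kappa),
\]
using linearity of the Shimura lift. Since $g$ corresponds to $G$, we have $Sh_{m_0,\mu_0}(g)=b(m_0,\mu_0)\,G$. It follows that the self-height equals
\[
|b(m_0,\mu_0)|^2\,\|g\|^{-2}\cdot(\text{constant})\cdot L'(G,\kappa).
\]

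\textbf{Step 3: Waldspurger/Kohnen–Zagier.} The main obstacle, which I will address last, is eliminating $|b(m_0,\mu_0)|^2/\|g\|^2$ via the Gross–Kohnen–Zagier/Kohnen–Zagier formula
\[
\frac{|b(m_0,\mu_0)|^2}{\|g\|^2}=\frac{(\kappa-1)!\,|D_0|^{\kappa-\frac12}}{2^{\kappa-1}\pi^{\kappa}\,\|G\|^2}\,L(G,\chi_{D_0},\kappa),
\]
adjusted to the vector-valued normalisation. This introduces the twisted value $L(G,\chi_{D_0},\kappa)$, whereas Zhang's formula involves $L'(G\otimes\theta_k)=L'(G,\kappa)L(G,\chi_{D_0},\kappa)$. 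So in Step 2 I must keep track of whether $L'(Sh_{m_0,\mu_0}(g),\kappa)$ in the conventions of Theorem \ref{TheCM} is really the Rankin–Selberg derivative, i.e.\ the derivative of $L(G,s)L(G,\chi_{D_0},s)$ up to the Waldspurger factor. I would check this against the $\kappa=1$ case in \cite{BY}.

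\textbf{Constants.} The remaining work is bookkeeping. Using $|D_0|=4Nm_0$ and the duplication formula $\Gamma(\kappa-\frac12)=\frac{(2\kappa-2)!\sqrt\pi}{4^{\kappa-1}(\kappa-1)!}$, the powers of $2$ and $\pi$ should combine into
\[
\frac{(2\kappa-2)!\sqrt N\,|m_0|^{\kappa-\frac12}}{2^{4\kappa-3}\pi^{\kappa+1}\|G\|^2}.
\]
I expect the matching of normalisations (the Petersson norm of vector-valued $g$ versus $\|G\|$, and the factors $\deg Z(U)$ and $w_k$) to be where the argument could fail. I would fix these normalisations by checking the case $\kappa=1$ against the classical Gross–Zagier formula \cite{GZ}.
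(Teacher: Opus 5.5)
Your outline is the paper's argument: assuming modularity, comparing the $q^{m_0}e_{\mu_0}$-coefficients of $\phi_\kappa^G=g\otimes Z_\kappa(f)$ gives $\langle Z_\kappa^G(m_0,\mu_0),Z_\kappa^G(m_0,\mu_0)\rangle=b(m_0,\mu_0)\langle Z_\kappa(f),Z_\kappa(U)\rangle$. Theorem \ref{mainthe} then evaluates the right side, and a Waldspurger-type formula from \cite{GKZ} or \cite{Du} finishes the argument; the paper does no more constant bookkeeping than you do.

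Your worry in Step 3 resolves itself. Because $Sh_{m_0,\mu_0}(g)=b(m_0,\mu_0)G$, the derivative in Theorem \ref{mainthe} is the untwisted $b(m_0,\mu_0)L'(G,\kappa)$. Since $L(G,\kappa)=0$ for $G$ attached to $S_{1/2+\kappa,\rho_L}$ (root number $-1$), the product $L'(G,\kappa)L(G,\chi_{D_0},\kappa)$ that Kohnen--Zagier produces is exactly the Rankin--Selberg derivative in Zhang's formula, with no double counting.
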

\begin{proof}
We choose $Z_\kappa(m, \mu) = Z_\kappa(U)$, such that $c(m, \mu)\neq 0$.

Assuming modularity, we have $\phi_\kappa^G = g(\tau) \otimes Z_\kappa(f)$, and hence
\begin{eqnarray*}
\langle Z_\kappa(m, \mu)^G, Z_\kappa(m, \mu)^G \rangle &=& c(m, \mu) \langle Z_\kappa(f), Z_\kappa(m, \mu)^G \rangle \\
&=& c(m, \mu) \langle Z_\kappa(f), Z_\kappa(U) \rangle.
\end{eqnarray*}
By Theorem \ref{mainthe}, we obtain
\begin{eqnarray*}
\langle Z_\kappa(m, \mu)^G, Z_\kappa(m, \mu)^G \rangle = c(m, \mu) \frac{2\sqrt{N} \, \Gamma(\kappa - \frac{1}{2})}{(4\pi)^{\kappa - 1} \pi^{\frac{3}{2}}} L'(Sh_{m_0, \mu_0}(\xi_{3/2 - \kappa} f), \kappa).
\end{eqnarray*}
Combining this with the Waldspurger-type formula in \cite[Chapter 2]{GKZ} or \cite{Du},  we deduce the Gross-Zagier-Zhang formula.
\end{proof}

\section*{Acknowledgements}We would like to thank T. Yang and J. Bruinier for  useful suggestions.



\begin{thebibliography}{BCDE}


	
\bibitem[ABS] {ABS}{\em C. Alfes,   J. Bruinier, M. Schwagenscheidt}, Harmonic weak Maass forms and periods II,
Math. Ann., 391(2024),  791-817.

\bibitem[Bo1]{Bo}{\em R. Borcherds}, Automorphic forms with singularities on Grassmannians, Invent. Math., 132(1998), 491-562.

\bibitem[Bo2]{BoDuke} {\em R. Borcherds}, The Gross-Kohnen-Zagier theorem in higher dimensions, Duke Math. J. 97 (1999),  219-233.

\bibitem[Bost]{Bost}{\em J. Bost}, Potential theory and Lefschetz theorems for arithmetic surfaces, Ann. scient. \'Ec. Norm. Sup. 32(1999), 241-312.


\bibitem[Br]{Br}{\em J. Bruinier }, Borcherds products on $O(2, l)$ and Chern classes of Heegner divisors, Springer Lecture Notes in Mathematics, Springer-Verlag, 1780(2002).


 
 \bibitem[BEY]{BEY}{\em J. Bruinier, S. Ehlen, T. Yang}, CM values of higher automorphic Green functions
for orthogonal groups, Invent. math.,  (2021) 225, 693–785.




\bibitem[BY]{BY}{\em J. Bruinier, T. Yang}, Faltings heights of CM cycles and derivatives of L-functions, Invent. Math., 177 (2009), 631-681.






\bibitem[BO]{BO} {\em J. Bruinier, K. Ono}, Heegner divisors, L-functions and harmonic
weak maass forms, Ann.
Math., 172(2010), 2135–2181.



\bibitem[Du]{Du} {\em T. Du}, On the arithmetic product formula and central derivatives of L-functions, 	arXiv:2412.00688.

\bibitem[DY]{DY}  {\em T. Du, T. Yang}, Arithmetic Siegel-Weil formula on $X_0(N)$,
 Advances in mathematics, 345(2019), 702-755.





\bibitem[GKZ]{GKZ} {\em B. Gross, W. Kohnen, D. Zagier}, Heegner points and derivatives of L-series. II. Math. Ann.,
278 (1987), 497-562.

\bibitem[GS]{GS} {\em H. Gillet, C. Soul\'e},
Arithmetic intersection theory, Publ. Math. I.H.E.S.,
72(1990), 94-174.



\bibitem[GZ]{GZ} {\em  B. Gross, D. Zagier},  Heegner points and derivatives of L-series, Invent. Math., 84 (1986),  225--320.



\bibitem[KM]{KM}  {\em N. Katz, B. Mazur},
Arithmetic moduli of elliptic curves.Annals of Mathematics Studies, 108, Princeton University Press, Princeton, NJ, (1985).

\bibitem[KuM]{KM2} {\em S. Kudla, J. Millson}, Intersection numbers of cycles on
locally symmetric spaces and Fourier coefficients of holomorphic modular
forms in several complex variables, Inst. Hautes  \'Etudes Sci. Publ. Math., 71(1990), 121–172.



\bibitem[Ku]{KuIntegral} {\em S. Kudla}, Integrals of Borcherds forms,  Compositio Math., 137 (2003),  293-349.


\bibitem[KY]{KYPull}{\em S. Kudla, T. Yang}, On the pullback of an arithmetic theta function, manuscripta math., 140(2013), 393-440.

\bibitem[Sch]{Scho}  {\em J. Schofer}, Borcherds forms and generalizations of singular moduli, J. Reine Angew. Math., 629(2009), 1-36.


\bibitem[Xue]{Xue} {\em H. Xue}, Gross-Kohnen-Zagier theorem for higher weight   forms, Math. Res. Lett. 17 (2010), no. 3, 573-586. 
    
\bibitem[Ze]{Ze} {\em S. Zemel}, A Gross–Kohnen–Zagier type theorem for
higher-codimensional Heegner cycles, Research in Number Theory (2015) 1:23

 \bibitem[Zh]{Zhang} {\em S. Zhang}, Heights of Heegner cycles and derivatives of L-series, Invent. Math., 130(1997),
99-152.









%
%


    

\end{thebibliography}
\end{document}